\documentclass{article}
\usepackage{graphicx}
\usepackage{float}
\usepackage{tikz} 
\usepackage{amsmath} 
\usepackage{bbm}
\usepackage{amsthm}
\usepackage{amssymb} 
\usepackage{subcaption}
\usepackage{fullpage}
\usepackage{accents}
\usepackage{orcidlink}

\definecolor{viola}{rgb}{0.3,0,0.7}
\definecolor{ciclamino}{rgb}{0.5,0,0.5}
\definecolor{blu}{rgb}{0,0,0.7}
\definecolor{rosso}{rgb}{0.9,0,0}

\def\pier #1{{\color{black}#1}}
\def\abramo #1{{\color{black}#1}}
\def\betti #1{{\color{black}#1}}

\def\mz #1{{\color{black}#1}}
\def\aa #1{{\color{black}#1}}
\def\pc #1{{\color{black}#1}}
\def\er #1{{\color{black}#1}}

\numberwithin{equation}{section}

\newtheorem{theorem}{Theorem}[section] 
\newtheorem{lemma}[theorem]{Lemma}

\newtheorem{remark}[theorem]{Remark}

\DeclareMathOperator*{\esssup}{ess\,sup}

\def\div {\nabla_x \cdot}
\renewcommand{\vec}{\boldsymbol}
\newcommand{\ubar}[1]{\underaccent{\bar}{#1}}

\begin{document}

\begin{center}
		
		{\LARGE \textbf{Global existence for  Consensus-Based Kinetic Models \\
		\vskip0.4cm
        for Image Segmentation}}

		\vskip0.5cm
		
		{\large\textsc{Abramo Agosti$^1$} \orcidlink{0000-0001-5706-3772}} \\
		{\normalsize e-mail: \texttt{abramo.agosti@unipv.it}} \\
		\vskip0.35cm
		
		{\large\textsc{Pierluigi Colli$^{1,2}$} \orcidlink{0000-0002-7921-5041}} \\
		{\normalsize e-mail: \texttt{pierluigi.colli@unipv.it}} \\
		\vskip0.35cm
		
		{\large\textsc{Elisabetta Rocca$^{1,2}$} \orcidlink{0000-0002-9930-907X}} \\
		{\normalsize e-mail: \texttt{elisabetta.rocca@unipv.it}} \\
		\vskip0.35cm
		
		{\large \textsc{Mattia Zanella$^1$} \orcidlink{0000-0001-8456-5866}} \\
		{\normalsize e-mail: \texttt{mattia.zanella@unipv.it}} \\
		\vskip0.35cm
		
		{\footnotesize $^1$Department of Mathematics ``F. Casorati'', University of Pavia, 27100 Pavia, Italy}
		\vskip0.1cm

        {\footnotesize$^2$Research Associate at the IMATI–C.N.R. Pavia, 27100 Pavia, Italy}
        \vskip0.1cm
		
	\end{center}



\begin{abstract}
In this article, we analyze a recently introduced kinetic model based on Hegselmann-Krause-type interaction dynamics  describing consensus-based image segmentation in the mean-field regime of a large number of particles, \aa{where the unknown particles density depends on both the spatial coordinates and the normalized gray level (feature) value}. The model is formulated as \aa{an evolutive} nonlocal partial differential equation featuring \aa{a non-local spatial drift term involving} a bounded confidence interaction kernel, depending on both spatial and feature proximity, and a \aa{spatial diffusion term degenerating in the feature variable} representing aleatoric uncertainties.

Establishing the well-posedness of this model on a bounded domain, which is essential for practical image processing, presents significant analytical challenges. These consist in the lack of uniform control of space derivatives of the solution due to interaction kernels \aa{with jump discontinuities} and degenerate \aa{spatial} diffusion, in the complexities of non-standard boundary terms under homogeneous Neumann conditions, and in the lack of compactness arising from the absence of derivatives with respect to the feature variable \aa{in the equation}.

To address these difficulties, we introduce a three-level regularization scheme. \pc{Specifically, we smooth the interaction kernel, apply a structural boundary regularization to the non-local interaction term in order to simplify the treatment of the boundary conditions, and incorporate} artificial diffusion both in the spatial and feature variables to resolve the degeneracy. We prove the well-posedness and the regularity of the solution of the regularized system via a finite difference scheme and study the asymptotic limit as the regularization parameters vanish, which leads to prove the existence of weak/distributional solutions of the original model. This approach significantly advances the mathematical theory of kinetic models for image segmentation by successfully extending the framework of distributional solutions to bounded domains and non-smooth interaction kernels \aa{with jump discontinuities}.
\vskip3mm
		
\noindent {\bf Key words:} Consensus-based image segmentation, kinetic model, nonlocal partial differential equation, degenerate parabolic equation, irregular coefficients, regularization techniques, well-posedness, convergence analysis, distributional solutions. 
    
\vskip3mm
    
\noindent {\bf AMS (MOS) Subject Classification:} 
35G31, 
35K65, 
35Q84, 
35R05, 
35R09. 
\end{abstract}

\section{Introduction}
Over the past few decades, a variety of computational strategies and mathematical approaches have been developed to address image segmentation problems. Among these, deep learning techniques based on neural networks have emerged as some of the most widely used methods in contemporary image segmentation tasks; see, e.g. \cite{Agosti_etal,NN,Sharma,Wang}. Although deep learning models excel in complex segmentation problems, their dependence on large annotated datasets remains a significant limitation, particularly in fields such as biomedical imaging, where data availability is scarce and manual labeling is both expensive and time-consuming.

A different approach is based on clustering methods, see e.g. \cite{CA,HS,JMF,Mittal}. These methods have been developed to group pixels with similar characteristics, effectively partitioning the image into distinct regions. Clustering-based methods offer an attractive alternative to deep learning techniques as they do not require supervised training and therefore can be used on small unlabeled datasets. In this direction, a kinetic approach for unsupervised clustering problems for image segmentation has been introduced in \cite{CPLFZ,HPV}. In these works, image segmentation has been connected to microscopic consensus-type models by considering the pixels of an image as composing an interacting system where each particle is characterized by its space position and a feature determining its gray level, and where each particle interacts with other particles that are sufficiently closed based on their distance in space and the proximity of their gray level, following an Hegselmann--Krause (HK) type nonlinear compromise interaction dynamics \cite{HK}\mz{, see also \cite{BL,Deffuant,D,MT,PTTZ,Toscani06} for extensions in consensus dynamics}. \mz{The HK interaction captures bounded-confidence dynamics in which agents adjust their state towards those of agents whose state lies within a prescribed confidence threshold, while more distant agents have no direct influence}. It results in the asymptotic formation of a finite number of clusters, which, \mz{in the context of image segmentation}, produce a segmentation mask by assigning to each cluster the mean of the gray levels of the particles that compose it and by applying a binary threshold \cite{CTZ,CPLFZ}. \mz{This asymptotic behavior has been extensively investigated and exploited for the segmentation of grayscale images  \cite{CTZ,CPLFZ,HPV,TGZ}. }

The mean-field limit of the aforementioned HK type microscopic dynamics, including random noise representing aleatoric-type uncertainties in the image acquisition process, is obtained in \cite{CPLFZ} and consists in the following nonlocal partial differential equation (PDE):
\begin{equation}
    \label{intro1}
    \partial_tf-\div \left(\Sigma(f)f\right)-\sigma D(c)\Delta_xf=0,
\end{equation}
valid in $\mathbb{R}^2\times [0,1]\times[0,T]$ and endowed with a proper initial condition, where $f:\mathbb{R}^2\times [0,1]\times [0,T]\to \mathbb{R}$ is the mean field limit of the empirical distribution of the HK model \er{and} $f=f(\mathbf{x},c,t)$ represents the density of particles around a point $\mathbf{x}\in \mathbb{R}^2$ with a normalized gray level value $c\in [0,1]$ at time $t\in [0,T]$, and the nonlocal interaction $\Sigma$ is defined as 
\begin{equation}
    \label{intro2}
\Sigma(f)\pier{(\vec{x},c,t)}=\int_0^1\int_{\mathbb{R}^2}P_{\Delta_1,\Delta_2}(\vec{x},\vec{x}^*,c,c^*)\left(\vec{x}-\vec{x}^*\right)f(\vec{x}^*,c^*,t)\,d\vec{x}^*dc^*,
\end{equation}
where the bounded confidence kernel has the following form:
\begin{align}
\label{intro3}
&P_{\Delta_{1},\Delta_{2}}(\vec{x},\vec{x}^*,c,c^*) =  \chi(|\mathbf{x} - \mathbf{x}^*| \leq \Delta_{1})\chi(|c - c^*| \leq \Delta_{2}).
\end{align}
\pc{Here, $\chi$ denotes the characteristic function} \mz{and $\Delta_1,\Delta_2>0$ are confidence thresholds}. \aa{Note that the drift and the diffusion terms in \eqref{intro1} involve only the spatial derivatives.} The parameter $\sigma>0$ in \eqref{intro1} is a measure for the amplitude of the random fluctuations in the system. Since aleatoric uncertainties are expected to appear only far away from the static feature’s
boundaries, a \er{non-constant} diffusion coefficient $D(c)=c(1-c)$, which is maximal at the center of the feature interval and \aa{degenerates at its endpoints}, \er{is} considered in \eqref{intro1}. The case without noise, i.e. $\sigma=0$, was firstly introduced and investigated in \cite{HPV}.

The aim of the present work is to study the well posedness and the regularity of solutions of \eqref{intro1}, defined over the bounded domain \er{$\Omega\times [0,1]\times [0,T]$, with $\Omega\subset \mathbb{R}^d$ and $d\leq 3$.} Even if typically in the kinetic modeling literature Fokker--Planck (FP) equations of type \eqref{intro1} are defined on the whole $\mathbb{R}^d$ or on the $d$-dimensional torus $\mathbb{T}^d$, considering a generic open bounded domain $\Omega\subset \mathbb{R}^d$ is more relevant in our context in view of the applications to image segmentation. This implies the need to define proper boundary conditions for \eqref{intro1} and to deal with boundary terms in the analysis of \eqref{intro1} originating from variational approaches. In the following we will consider homogeneous Neumann boundary conditions, and we will assume $\sigma=1$ for the ease of exposition.

We now review some analytical results available in the literature concerning models related to \eqref{intro1}. The authors of \cite{CHAZ} considered the FP equation associated to the noisy HK model defined on a one dimensional periodic spatial domain with no dependence on a feature variable and with constant diffusion\pc{.  They proved global well-posedness and derived regularity results for weak solutions by combining fixed-point arguments with parabolic regularity theory and classical energy and compactness methods.} A similar result was obtained in \cite{CHEN} for the same FP equation defined on the whole $\mathbb{R}$. We observe that the techniques employed in those works cannot be directly extended to \eqref{intro1}, which depends on the feature variable $c$ and is degenerate for $c=0,1$. We also observe that at the degeneracy points $c=0,1$ the equation \eqref{intro1} reduces to the transport equation associated to the HK model without noise. In \cite{CCR},
\pc{the well-posedness of a class of nonlocal transport equations associated with interacting particle systems, including the HK model as a particular case, was established in $\mathbb{R}^d$. The analysis has been carried out within a measure-valued framework, with solutions considered as probability measures.
The measure solution is characterized} in a distributional sense as the push-forward of the initial condition through the Lagrangian flow associated to the characteristics system, crucially assuming the local Lipschitz regularity of the interaction kernels. The existence of distributional solutions to general transport equations with rough \aa{(at least with Sobolev regularity $W^{1,1}$)} and divergence-free advection fields defined on $\mathbb{T}^d$ has been obtained in \cite{BCC} as the limit of vanishing viscosity schemes, proving the convergence of such schemes to the unique Lagrangian solution of the transport equation. In the latter case, the low integrability of the distributional solution does not guarantee uniqueness. 
We observe that the techniques employed in \cite{BCC,CCR} cannot be generalized to \eqref{intro1}, \aa{when specified for $c=0,1$}, since \aa{it is characterized by} interaction kernels \aa{with jump discontinuities which do not fit $W^{1,1}$ regularity}, \aa{since the advection field is} non solenoidal and moreover since the existence and regularity theory of characteristics employed therein is not directly applicable to the case with a bounded domain. 
\mz{A general theory for Fokker--Planck equations \aa{defined in $\mathbb{R}^d$} with irregular coefficients was developed by Le Bris and Lions \cite{LeBrisLions}, \aa{relying, as in \cite{BCC}, upon the DiPerna--Lions’ theory for linear transport equations with $W^{1,p}$ coefficients and bounded divergence for the advection field \cite{DPL}}. Their framework permits degenerate diffusion, but its results \aa{can not be extended to our case with bounded domain and with interaction kernels characterized by jump discontinuities. Moreover, differently from the standard FP equation, in equation \eqref{intro1}} diffusion acts only in the spatial variable vanishing at the endpoints of the feature interval, while the drift is a nonlocal functional of the evolving density. These features motivate the regularization and compactness approach developed below.}

We summarize here the main difficulties we have to tackle in the analysis of \eqref{intro1}:
\begin{itemize}
    \item we have to deal with non-smooth interaction kernels \aa{with jump discontinuities} and degenerate diffusion, which makes the derivation of a-priori estimates for \eqref{intro1} very challenging due to the lack of uniform control of $\nabla_x f$;
    \item we have to \er{handle} non-standard boundary conditions throughout the derivation of suitable a-priori estimates for \eqref{intro1};
    \item  \er{the drift and
the diffusion terms in \eqref{intro1} involve only the spatial derivatives}, which causes a lack of compactness for sequences of suitable approximations  employed to prove existence.
\end{itemize}
In our analysis \er{we overcome} these difficulties by properly regularizing equation \eqref{intro1} at three different levels: \er{we firstly} regularize the interaction kernel to be sufficiently smooth depending on a positive regularization parameter $\theta$; \pc{then,} we \er{introduce} a structural regularization of the non-local interaction term, depending on a positive parameter $\delta$, in such a way that it \er{vanishes on} $\partial \Omega$, which simplifies the treatment of the boundary conditions; \pc{finally,} we regularize the degeneracy of the diffusion term, adding artificial diffusion proportionally to a positive parameter $\sqrt{\epsilon}$ both in the $\mathbf{x}$ and in the $c$ variables. \er{We then prove} the well-posedness and the regularity of the solution of the regularized systems, employing a finite difference scheme and considering its convergence, and \er{we study} the limit as $\theta,\delta,\epsilon\to 0^+$. This convergence analysis \er{leads to the proof of} existence of distributional solutions of \eqref{intro1}, defined on $\Omega \times [0,1]\times [0,T]$. To the best of our knowledge, this is a novel result which goes beyond the current state of the art concerning the analysis of kinetic models for image segmentation. Letting $c=0,1$, these solutions generalize the notion of distributional solutions introduced in \cite{CCR} to the case of a bounded domain and a non-smooth interaction kernel \aa{with jump discontinuities}.

The paper is organized as follows. In Section $2$ we introduce the notation and the functional setting and we state the problem and its regularization. In Section $3$ we prove the well-posedness and the regularity of the solution of the regularized model. In Section $4$ we study the asymptotic limits as the regularization parameters go to zero.

\section{\pier{The problem and its regularization}}

\subsection{Notation and \pier{functional setting}}
Let $\Omega \subset \mathbb{R}^d$, $d\leq 3$, be an open bounded domain with smooth boundary. We generally indicate vectors with bold letters. 
Given a vector $\vec{v}\in \mathbb{R}^d$ and a tensor $\vec{T}\in \mathbb{R}^{d\times d}$, we indicate by $|\vec{v}|$ and $|\vec{T}|$ their Euclidean and Frobenius norms respectively. For a normed space $X$, the associated norm and seminorm\pier{, when applicable,}  are denoted by $\lVert \cdot \rVert_X$ and $\lvert \cdot \rvert_X$ respectively. The dual space of a Banach space $X$ is denoted by $X'$. Given an open bounded domain $\omega\subset \mathbb{R}^{d+1}$, which in the following will typically be $\Omega \times (0,1)$,
we denote by $L^p(\omega;K)$ and $W^{r,p}(\omega;K)$ the standard Lebesgue and Sobolev spaces of functions defined on $\omega$ with values in a set $K$, where $K$ may be $\mathbb{R}$ or a multiple power of $\mathbb{R}$. Given a function $f:\Omega\times (0,1)\to K$, $f=f(\vec{x},c)$ for $\vec{x}\in \Omega$, $c\in (0,1)$, we will indicate $\nabla f, \Delta f$ as differential operators acting on $f$ with respect to \pier{both} variables $\vec{x}$ and $c$, while $\nabla_x f,\div f,\Delta_xf$ will indicate differential operators in the variable $\vec{x}$ and $\partial_c f,\partial_c^2f$ differential operators in the variable $c$.
If $K\equiv \mathbb{R}$,
we simply write $L^p(\omega)$ and $W^{r,p}(\omega)$.  In the case $p = 2$, we use the notations $H^1(\omega) := W^{1,2}(\omega)$ and
$H^2(\omega) := W^{2,2}(\omega)$, and we denote by $(\cdot,\cdot)$ and $\lVert \cdot \rVert$ the $L^2(\omega)$ scalar product and induced norm between
functions with scalar or vectorial values. For $f\in L^1(\omega)$, we define $\bar{f}:=|\omega|^{-1}(f,1)$.
The duality pairing between $H^1(\omega; K)$ and $(H^1(\omega; K))'$ is denoted by \abramo{$< \cdot, \cdot>_{\omega}$. When $\omega \equiv \Omega \times (0,1)$, we will use the notation $< \cdot, \cdot>$.}

For any $1\leq p \leq \infty$, we denote by $L^p(0, T; V)$ the Bochner
space of functions from $(0,T)$ with values in a Banach space $V$. 
We will also use the notation $C^k(\overline{\Omega}; K)$ and $C^k ([0, T]; V)$, $k\geq 0$, for the spaces of continuously differentiable functions up to order $k$ defined on $\Omega$ with values in a set $K$ or
from $[0, T]$ to the space $V$, respectively.

In the following, $C$ denotes a generic positive constant independent of the unknown variables, the discretization and regularization parameters\pier{. Its value may} change from line to line. \pier{On the other hand,} $C_1, C _2 , \dots$ indicate generic positive constants whose particular value must be tracked through the calculations. \pier{In addition,}
$C(a, b, . . . )$ denotes a constant depending on the nonnegative parameters $a, b, \dots$.

\subsection{\pier{Statement of the problem and regularization}}
Given $\Omega\subset \mathbb{R}^{d}$, $d\leq 3$, and $T>0$, we consider the problem of finding a solution $f:\Omega\times [0,1]\times[0,T]\to \mathbb{R}$ to the equation
\begin{equation}
    \label{wp1}
    \partial_tf-\div \left(\Sigma(f)f\right)-D(c)\Delta_xf=0,
\end{equation}
valid in $\Omega\times [0,1]\times[0,T]$, endowed with homogeneous Neumann boundary conditions
\begin{equation}
    \label{wp2}
    \Sigma(f)f\cdot \vec{n}+D(c)\nabla_xf\cdot \vec{n}=0\quad \text{on} \; \partial \Omega\times [0,1]\times[0,T],
\end{equation}
\betti{where $\vec{n}$ denotes the outer normal vector to $\partial\Omega$}, and with the initial condition
\begin{equation}
    \label{wp3}
    f(\vec{x},c,0)=f_0(\vec{x},c)\quad \forall \vec{x} \in \Omega, c\in [0,1],
\end{equation}
with $f_0\geq 0$, $\int_0^1\int_{\Omega}f_0d\vec{x}dc=1$.
\begin{remark}
\label{rem1}
Due to the fact that $f_0(\vec{x},c)$ represents the number of agents/particles per unit volume centered in $\vec{x}$ with feature $c$, \pier{it is} natural to assume that $f_0\in L^{\infty}(\Omega\times (0,1))$. In the following we will also consider other regularity classes for the initial condition.
\end{remark}
\noindent
We consider the following form for the mobility:
\begin{equation}
    \label{wp4}
    D(c)=c^a(1-c)^b, 
\end{equation}
where $a,b\in \mathbb{R}^+$. Also, we consider here the following form of nonlocality $\Sigma(f):\Omega \times [0,1]\times [0,T]\to \mathbb{R}^d$
\begin{equation}
    \label{wp5}
\Sigma(f)\pier{(\vec{x},c,t)}=\int_0^1\int_{\Omega}P_{\Delta_1,\Delta_2}(\vec{x},\vec{x}^*,c,c^*)\left(\vec{x}-\vec{x}^*\right)f(\vec{x}^*,c^*,t)\,d\vec{x}^*dc^*,
\end{equation}
where the bounded confidence kernel is given in \eqref{intro3}.
 We observe that, since the integration in \eqref{wp5} is over the bounded domain $\Omega \times [0,1]$, the integration variables $\mathbf{x}^*$ and $c^*$ are confined to vary in the sets $\mathbf{x}^*\in \mathcal{B}_{\Delta_1}(\mathbf{x})\cap \Omega$, where $\mathcal{B}_{\Delta_1}(\mathbf{x})$ is the ball centered in $\mathbf{x}$ with radius $\Delta_1$, and $c^*\in (\max(0,c-\Delta_2),\min(1,c+\Delta_2))$.
This represents the fact that a particle with position $\vec{x}^*$ and feature $c^*$ interacts with the particles in its spherical neighborhood of radius $\Delta_1$ in space and $\Delta_2$ in the feature interval up to the boundary of the domain, and cannot interact with particles outside of the domain $\Omega \times [0,1]$. Hence, \eqref{wp5} can be rewritten as
\begin{equation}
    \label{wp5b}
\Sigma(f)=\int_{\max(0,c-\Delta_2)}^{\min(1,c+\Delta_2)}\int_{\mathcal{B}_{\Delta_1}(\vec{x})\cap\Omega}\left(\vec{x}-\vec{x}^*\right)f(\vec{x}^*,c^*,t)\,d\vec{x}^*dc^*,
\end{equation}
which has the advantage that the integrand function is not explicitly written in terms of the non-smooth characteristic functions.  

Given a function $f:\Omega\times [0,1]\times [0,T]\to \mathbb{R}$, we indicate as $f(t):\Omega\times [0,1]\to \mathbb{R}$ the function $f(t)(\vec{x},c):=f(\vec{x},c,t)$ for a given $t\in [0,T]$.
The operator $\Sigma$ defined in \eqref{wp5} has the following properties.

\begin{lemma}
\label{lem1}
For fixed $t\in [0,T]$, the operator $\Sigma$ is a bounded linear operator which maps $L^1(\Omega \times [0,1])$ in $L^{\infty}\left(\Omega \times [0,1];\mathbb{R}^d\right)$, and, for any $f(t)\in L^1(\Omega \times [0,1])$, it holds that
\begin{itemize}
    \item[(i)] $|\Sigma(f(t))|\leq \Delta_1\lVert f(t)\rVert_{L^1(\Omega \times [0,1])}.$ 
\end{itemize}
Moreover, \abramo{for any $f(t)$ such that $f(\cdot,c,t)\in L^1(\Omega)$ for a.e. $c\in (0,1)$ and $\int_{\Omega}|f(x^*,\cdot,t)|\,dx^*\in L^{\infty}(0,1)$, it holds that
\begin{itemize}
    \item[(ii)] $\lVert \partial_c \Sigma(f(t))\rVert_{L^{\infty}(\Omega\times (0,1);\mathbb{R}^d)}\leq C\left\lVert\int_{\Omega}|f(x^*,\cdot,t)|\,dx^*\right\rVert_{L^{\infty}(0,1)}.$
\end{itemize}}
\end{lemma}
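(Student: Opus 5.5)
Linearity of $\Sigma$ follows at once from \eqref{wp5}, since the kernel $P_{\Delta_1,\Delta_2}(\vec{x},\vec{x}^*,c,c^*)(\vec{x}-\vec{x}^*)$ does not depend on $f$. Boundedness from $L^1(\Omega\times[0,1])$ into $L^\infty(\Omega\times[0,1];\mathbb{R}^d)$ is then a consequence of (i), so the plan is to prove (i) first and (ii) afterwards.

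For (i), I would fix $(\vec{x},c)$ and use the equivalent form \eqref{wp5b}. On the integration region $\vec{x}^*\in\mathcal{B}_{\Delta_1}(\vec{x})\cap\Omega$ we have $|\vec{x}-\vec{x}^*|\le\Delta_1$, and $0\le P_{\Delta_1,\Delta_2}\le 1$. Moving the Euclidean norm inside the integral gives
\begin{equation*}
|\Sigma(f(t))(\vec{x},c)|\le \int_0^1\int_\Omega P_{\Delta_1,\Delta_2}\,|\vec{x}-\vec{x}^*|\,|f(\vec{x}^*,c^*,t)|\,d\vec{x}^*dc^*\le \Delta_1\lVert f(t)\rVert_{L^1(\Omega\times[0,1])}.
\end{equation*}
This bound holds for every $(\vec{x},c)$, so it also holds in the $L^\infty$ sense. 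Measurability of $\Sigma(f(t))$ follows from Fubini, because the integrand is jointly measurable.

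For (ii), I would set $g(c^*):=\int_{\Omega}P_{\Delta_1,\Delta_2}\ldots$; more precisely, for fixed $\vec{x}$ define
\begin{equation*}
G(\vec{x},c^*):=\int_{\mathcal{B}_{\Delta_1}(\vec{x})\cap\Omega}(\vec{x}-\vec{x}^*)f(\vec{x}^*,c^*,t)\,d\vec{x}^*.
\end{equation*}
This satisfies $|G(\vec{x},c^*)|\le \Delta_1 h(c^*)$ with $h(c^*):=\int_\Omega|f(\vec{x}^*,c^*,t)|\,d\vec{x}^*\in L^\infty(0,1)$. Then
\begin{equation*}
\Sigma(f(t))(\vec{x},c)=\int_{\max(0,c-\Delta_2)}^{\min(1,c+\Delta_2)}G(\vec{x},c^*)\,dc^*,
\end{equation*}
so as a function of $c$ it is an integral of an $L^\infty$ (hence $L^1_{loc}$) function between Lipschitz limits. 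I would argue that $c\mapsto\Sigma(f(t))(\vec{x},c)$ is Lipschitz: for $c_1<c_2$, the difference of the two integrals is an integral of $G$ over the symmetric difference of the two intervals. Each endpoint map $c\mapsto\max(0,c-\Delta_2)$ and $c\mapsto\min(1,c+\Delta_2)$ is $1$-Lipschitz, so the symmetric difference has measure at most $2|c_2-c_1|$. This yields
\begin{equation*}
|\Sigma(f(t))(\vec{x},c_2)-\Sigma(f(t))(\vec{x},c_1)|\le 2\Delta_1\lVert h\rVert_{L^\infty(0,1)}|c_2-c_1|.
\end{equation*}
By Rademacher's theorem in one variable, the weak derivative $\partial_c\Sigma$ then exists and is bounded by $C=2\Delta_1$, uniformly in $\vec{x}$. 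Formally this is the Leibniz rule: $\partial_c\Sigma=G(\vec{x},\min(1,c+\Delta_2))\chi_{\{c+\Delta_2<1\}}-G(\vec{x},\max(0,c-\Delta_2))\chi_{\{c-\Delta_2>0\}}$. I prefer the Lipschitz argument because $G(\vec{x},\cdot)$ is only defined almost everywhere.

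The main obstacle is exactly this point. Because $G(\vec{x},\cdot)$ is defined only a.e., pointwise evaluation at the endpoints is meaningless, so I would rely on the difference-quotient (Lipschitz) argument rather than the Leibniz formula. A second technical step is checking that the bound is uniform in $\vec{x}$, so that the $L^\infty(\Omega\times(0,1))$ norm is controlled; this follows because $h$ does not depend on $\vec{x}$.
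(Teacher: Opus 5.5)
Your proof is correct, and part (i) is exactly the paper's argument. For (ii) you take a different technical route.

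The paper differentiates \eqref{wp5b} directly by the Leibniz rule. In your notation it writes $\partial_c\Sigma$ as $H(1-c-\Delta_2)\,G(\vec{x},\min(c+\Delta_2,1))-H(c-\Delta_2)\,G(\vec{x},\max(c-\Delta_2,0))$ and bounds each slice by $\Delta_1 h(c\pm\Delta_2)$. You never differentiate: you bound the increment of $c\mapsto\Sigma(f(t))(\vec{x},c)$ by the integral of $|G|$ over the symmetric difference of the moving intervals. You then read off the weak derivative from the Lipschitz constant. Both routes give $C=2\Delta_1$.

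The paper's route yields an explicit representation of $\partial_c\Sigma$ and the slightly finer pointwise bound $\Delta_1\bigl(h(c+\Delta_2)+h(c-\Delta_2)\bigr)$. Taken literally, however, it evaluates the a.e.-defined function $G(\vec{x},\cdot)$ at single points, and it tacitly identifies a pointwise derivative with the weak one. Justifying it needs Lebesgue's differentiation theorem plus absolute continuity of the indefinite integral. Your Lipschitz estimate delivers both in one step.

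Your argument also carries over verbatim to the smoothed kernel. The map $s\mapsto\chi_{\theta,\Delta_2}(s)$ is unimodal with values in $[0,1]$, hence of total variation at most $2$. Therefore $\int_0^1|\chi_{\theta,\Delta_2}(c_2-c^*)-\chi_{\theta,\Delta_2}(c_1-c^*)|\,dc^*\le 2|c_2-c_1|$. This gives the $\theta$-uniform bound on $\partial_c\Sigma_{\theta}^{\theta}$ that is actually invoked in the proof of Lemma~\ref{lem6}.

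One detail is worth adding: $\Sigma(f(t))$ is jointly measurable (in fact continuous) on $\Omega\times(0,1)$. With that, Fubini and one-dimensional integration by parts for absolutely continuous functions identify your a.e.\ $c$-derivative with the weak partial derivative in $L^\infty(\Omega\times(0,1);\mathbb{R}^d)$.
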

\begin{proof}
From the definitions \eqref{wp5}-\eqref{intro3} \pier{it is} easy to deduce that $\Sigma(f)$ satisfies
\[
|\Sigma(f(t))|\leq \Delta_1\int_0^1\int_{\Omega}|f(\vec{x}^*,c^*,t)|\,d\vec{x}^*dc^*,
\]
hence $(i)$ holds and in particular
\[
\lVert \Sigma(f(t))\rVert_{L^{\infty}(\Omega\times (0,1);\mathbb{R}^d)}
\leq  \Delta_1\lVert f(t)\rVert_{L^1(\Omega\times[0,1])},
\]
which implies that $\Sigma$ maps $L^1(\Omega \times [0,1])$ in $L^{\infty}\left(\Omega \times [0,1];\mathbb{R}^3\right)$. We now prove $(ii)$, which will be useful in the forthcoming sections.
Taking the derivative with respect to $c$ of \eqref{wp5b}, we obtain that
\begin{align*}
&\partial_c\Sigma(f(t))=H(1-c-\Delta_2)\int_{\mathcal{B}_{\Delta_1}(\vec{x})\cap\Omega}\left(\vec{x}-\vec{x}^*\right)f(\vec{x}^*,\min(c+\Delta_2,1),t)\,d\vec{x}^*\\
& \qquad -H(c-\Delta_2)\int_{\mathcal{B}_{\Delta_1}(\vec{x})\cap\Omega}\left(\vec{x}-\vec{x}^*\right)f(\vec{x}^*,\max(c-\Delta_2,0),t)\,d\vec{x}^*
\end{align*}
where $H(\cdot)$ is the Heaviside function.
\abramo{Taking the Euclidean norm on both sides, we easily \pier{infer} that
\begin{equation*}
\,|\partial_c\Sigma(f(t))|\leq \Delta_1\,\int_{\mathcal{B}_{\Delta_1}(\vec{x})\cap\Omega}|f(\vec{x}^*,c+\Delta_2,t)|\,d\vec{x}^*+\Delta_1\,\int_{\mathcal{B}_{\Delta_1}(\vec{x})\cap\Omega}|f(\vec{x}^*,c-\Delta_2,t)|\,d\vec{x}^*,
\end{equation*}
from which we \pier{obtain} $(ii)$.}
\end{proof}

\subsection{Regularized model}
We now introduce \pier{suitable} regularizations of the nonlocal \pier{and} diffusive terms in \eqref{wp1}. \pier{This allows us to establish existence and regularity results for solutions to the regularized problems. Subsequently, by analyzing the behavior of these solutions as the regularization parameters tend to zero, we prove the existence of solutions to the original model through a limiting argument.}

We first introduce a structural regularization of the operator $\Sigma$, depending on a positive parameter $\delta>0$, in such a way that its regularized version $\Sigma^{\delta}$ satisfies the property that $\Sigma^{\delta}(f)|_{\partial{\Omega}}=\mathbf{0}$ for any $\delta>0$. As will become clear later, this will be used to simplify the boundary conditions \eqref{wp2}. 
In order to proceed, let us introduce a function $G^{\delta}\in C(\bar{\Omega})\cap W^{2,\infty}(\Omega)$ such that:
\begin{itemize}
\item[(j)] $G^{\delta}(\vec{x})\equiv 1$ on $\Omega\setminus \Omega_{\delta}$, where $\Omega_{\delta}:=\{\vec{x}\in \Omega: \text{dist}(\vec{x},\partial \Omega)\leq \delta\}$;
\item[(jj)] $|G^{\delta}(\vec{x})|\leq 1$ on $\Omega$ and $G^{\delta}|_{\partial \Omega}=0$ for any $\delta>0$ ;
\item[(jjj)]$\lVert \nabla G^{\delta} \rVert_{L^{\infty}(\Omega_{\delta};\mathbb{R}^{d})}\leq \frac{C}{\delta}$ and $\lVert \nabla \nabla G^{\delta} \rVert_{L^{\infty}(\Omega_{\delta};\mathbb{R}^{d\times d})}\leq \frac{C}{\delta^2}$;
\item[(jv)] $G^{\delta}\to 1$ a.e. in $\Omega$ as $\delta \to 0$.
\end{itemize}
We observe that \betti{(jj) and (jv)} imply that
\begin{equation}
    \label{gdws}
    G^{\delta}\overset{\ast}\rightharpoonup 1\;\; \text{in}\;\; L^{\infty}(\Omega)\;\; \text{as}\;\;\delta \to 0.
\end{equation}
Indeed, for any $g\in L^1(\Omega)$, thanks to $(jj)$, $(jv)$ and the Lebesgue dominated convergence theorem \pier{it is} easy to \pier{see} that
\[
\lim_{\delta \to o}\int_{\Omega}(G^{\delta}(\vec{x})-1)g(\vec{x})\,d\vec{x}=0.
\]
Property \eqref{gdws} will be fundamental in the process of passing to the limit as $\delta \to 0$.
\newline
For instance, in the case $\Omega=[-L,L]^3$, for a given $L>0$, such a family of $G^{\delta}$ can be explicitly constructed e.g. by introducing the function
\[
h^{\delta}(s):=
\begin{cases}
-\frac{1}{\delta^2}(s+L)^2+\frac{2}{\delta}(s+L) \quad \text{for}\;\; -L\leq s \leq -L+\delta;\\
1 \quad \quad \quad \quad \quad \quad \quad \quad \quad \quad \;\;\, \, \text{for}\;\; -L+\delta\leq s \leq L- \delta;\\
-\frac{1}{\delta^2}(L-s)^2+\frac{2}{\delta}(L-s) \quad \text{for}\;\; L-\delta\leq s \leq L;
\end{cases}
\]
and setting $G^{\delta}(\vec{x})=h^{\delta}(\vec{x})h^{\delta}(y)h^{\delta}(z)$. The properties (j)-(jv) of $G^{\delta}$ are straightforwardly satisfied by the latter construction.
We finally define 
\begin{equation}
    \label{sigmadelta}
    \Sigma^{\delta}(f):=G^{\delta}\Sigma(f).
\end{equation}
Thanks to the properties of $G^{\delta}(\cdot)$ and repeating the arguments which lead to Lemma~\ref{lem1}, it is easy to deduce the following properties for $\Sigma^{\delta}$\pier{. Since their proofs are straightforward, we omit them.}
\begin{lemma}
\label{lem1d}
For fixed $t\in [0,T]$, the operator $\Sigma^{\delta}$ is a bounded linear operator which maps $L^1(\Omega \times [0,1])$ in $L^{\infty}\left(\Omega \times [0,1];\mathbb{R}^d\right)$, and
\begin{itemize}
    \item[(i)] $\Sigma^{\delta}(f)|_{\partial \Omega}=\mathbf{0}$;
    \item[(ii)] $|\Sigma^{\delta}(f(t))|\leq \Delta_1\lVert f(t)\rVert_{L^1(\Omega \times [0,1])}\pier{\quad\hbox{for any}\quad} f(t)\in L^1(\Omega \times [0,1]).$ 
\end{itemize}
Moreover, \abramo{for any $f(t)$ such that $f(\cdot,c,t)\in L^1(\Omega)$ for a.e. $c\in (0,1)$ and $\int_{\Omega}|f(x^*,\cdot,t)|\,dx^*\in L^{\infty}(0,1)$, it holds that
\begin{itemize}
    \item[(iii)] $\lVert \partial_c \Sigma^{\delta}(f(t))\rVert_{L^{\infty}(\Omega\times (0,1);\mathbb{R}^d)}\leq C\left\lVert\int_{\Omega}|f(x^*,\cdot,t)|\,dx^*\right\rVert_{L^{\infty}(0,1)}.$
\end{itemize}}
\end{lemma}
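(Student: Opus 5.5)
The approach is to reduce everything to Lemma~\ref{lem1} using the factorization \eqref{sigmadelta}, $\Sigma^{\delta}(f)=G^{\delta}\Sigma(f)$, and the properties (j)--(jv) of $G^{\delta}$. Since $G^{\delta}$ depends only on $\vec{x}$ and is a fixed bounded function, linearity of $\Sigma^{\delta}$ follows at once from linearity of $\Sigma$: the map $f\mapsto\Sigma(f)$ is linear because the integral in \eqref{wp5} is linear in $f$, and multiplication by $G^{\delta}$ preserves linearity.

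For (ii), and hence boundedness $L^1\to L^{\infty}$, I would use the pointwise inequality $|\Sigma^{\delta}(f(t))|=|G^{\delta}|\,|\Sigma(f(t))|\le|\Sigma(f(t))|$, which holds by (jj). Combining it with Lemma~\ref{lem1}(i) gives $|\Sigma^{\delta}(f(t))|\le\Delta_1\lVert f(t)\rVert_{L^1}$, and taking the essential supremum over $\Omega\times(0,1)$ yields the operator bound with constant $\Delta_1$, uniformly in $\delta$.

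For (i), I would note that $\Sigma(f(t))$ is in fact continuous in $\vec{x}$ on $\overline{\Omega}$. Writing it through \eqref{wp5b}, the dependence on $\vec{x}$ enters only through the factor $(\vec{x}-\vec{x}^*)$ and the integration set $\mathcal{B}_{\Delta_1}(\vec{x})\cap\Omega$. The measure of the symmetric difference of these sets goes to zero as $\vec{x}$ moves, and $f\in L^1$ is absolutely continuous, so the map is continuous. It is also bounded by (ii). Since $G^{\delta}\in C(\overline{\Omega})$ vanishes on $\partial\Omega$, the product $G^{\delta}\Sigma(f)$ is well defined on $\partial\Omega$ and equals $\mathbf{0}$ there.

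For (iii), the key observation is that $G^{\delta}$ is independent of $c$, so $\partial_c\Sigma^{\delta}(f(t))=G^{\delta}\,\partial_c\Sigma(f(t))$ in the distributional sense in $c$. This gives $\lVert\partial_c\Sigma^{\delta}(f(t))\rVert_{L^{\infty}}\le\lVert G^{\delta}\rVert_{L^{\infty}}\lVert\partial_c\Sigma(f(t))\rVert_{L^{\infty}}$, and Lemma~\ref{lem1}(ii) together with (jj) gives the claim, again with a constant independent of $\delta$.

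The only mildly delicate step is (i). The boundary trace of an $L^{\infty}$ function is not defined in general, so one needs either the continuity of $\Sigma(f)$ in $\vec{x}$ argued above or an interpretation of (i) as the limit of $G^{\delta}\Sigma(f)$ at the boundary. I would establish the continuity first; the remaining items are direct consequences of the product structure.
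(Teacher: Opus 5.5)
Your proposal is correct and follows the route the paper intends. The paper omits the proof, saying only that it follows from the properties of $G^{\delta}$ and a repetition of Lemma~\ref{lem1}; that is exactly your factorization $\Sigma^{\delta}=G^{\delta}\Sigma$, using $|G^{\delta}|\le 1$, the $c$-independence of $G^{\delta}$, and $G^{\delta}|_{\partial\Omega}=0$, and your continuity-in-$\vec{x}$ argument for (i) supplies a detail the paper leaves implicit.
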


We then regularize the bounded confidence kernel ${\chi}_{\Delta_1}(\cdot):=\chi(|\cdot|\leq \Delta_1)$ by introducing a \pc{regularization} $\chi_{\theta,\Delta_1}$, depending on a parameter $\theta>0$, such that:
\begin{itemize}
\item[--] \pc{$\chi_{\theta,\Delta_1}$ is smooth:} 
\begin{equation}
\label{wp12}
\chi_{\theta,\Delta_1}\in C_c^2(K);
\end{equation}
\item[--] \pc{The convergenge property holds true:}
\begin{equation}
\label{wp13}
\chi_{\theta,\Delta_1}(\cdot)\to {\chi}_{\Delta_1}(\cdot) \quad \text{\abramo{a.e. in $K$ as}}\;\; \theta\to 0;
\end{equation}
\item[--] There exists a constant $C>0$ independent \pier{of} $\theta$ such that, for all $s\in K$, we have that
\begin{equation}
\label{wp14}
0\leq \chi_{\theta,\Delta_1}(s)\leq 1, \quad \left|\nabla \chi_{\theta,\Delta_1}(s)\right|\leq \frac{C}{\theta}, \quad \left|\nabla \nabla \chi_{\theta,\Delta_1}(s)\right|\leq \frac{C}{\theta^2}.
\end{equation}
\end{itemize}
Here\pc{,} $K$ may be $\mathbb{R}$ or $\mathbb{R}^d$.
In order to do this we introduce the following smooth regularization of the Heaviside function:
\[
H_{\theta}(s):=
\begin{cases}
\begin{aligned}
    & 0, && s< 0,\\
    & 10\frac{s^3}{\theta^3}-15\frac{s^4}{\theta^4}+6\frac{s^5}{\theta^5}, && 0\leq s\leq \theta,\\
    & 1, && s> \theta,
\end{aligned}
\end{cases}
\]
which satisfies $H_{\theta}\in C^2(\mathbb{R})$, \abramo{$H_{\theta}(\cdot)\to H(\cdot)$ a.e. in $\mathbb{R}$ as $\theta\to 0$}, and
\[\left|\frac{d}{ds}H_{\theta}(s)\right|\leq \frac{C}{\theta}, \quad \left|\frac{d^2}{ds^2}H_{\theta}(s)\right|\leq \frac{C}{\theta^2}.\]
We also need to introduce for later purposes the following convex smooth regularization of the absolute-value function:
\[
\xi_{\theta}(s):=
\begin{cases}
\begin{aligned}
    & -s, && s< -\theta,\\
    &-\frac{s^4}{8\theta^3}+3\frac{s^2}{4\theta}+\frac{3\theta}{8}, && -\theta \leq s\leq \theta,\\
    & s, && s> \theta,
\end{aligned}    
\end{cases}
\]
which satisfies $\xi_{\theta}\in C^2(\mathbb{R})$, $\xi_{\theta}(\cdot)\to |\cdot|$ uniformly as $\theta\to 0$, and
\[|s|\leq |\xi_{\theta}(s)|, \quad \left|\frac{d}{ds}\xi_{\theta}(s)\right|\leq C, \quad \left|\frac{d^2}{ds^2}\xi_{\theta}(s)\right|\leq \frac{C}{\theta}.\]

\noindent
We finally define
\begin{equation}
    \label{wp15}
    \chi_{\theta,\Delta_1}(\vec{x}-\vec{x}^*):=H_{\theta}\left(\Delta_1^2-|\vec{x}-\vec{x}^*|^2\right),\quad \chi_{\theta,\Delta_2}(c-c^*):=H_{\theta}\left(\Delta_2^2-|c-c^*|^2\right),
\end{equation}
which satisfy \eqref{wp12}-\eqref{wp14} by construction. We then introduce the following regularized form of the nonlocal term \eqref{wp5}
\begin{equation}
    \label{wp16}
\Sigma_{\theta}^{\delta}(f):=G^{\delta}\Sigma_{\theta}(f):=G^{\delta}(\mathbf{x})\int_0^1\int_{\Omega}{P}_{\theta,\Delta_1,\Delta_2}(\vec{x},\vec{x}^*,c,c^*)\left(\vec{x}-\vec{x}^*\right)f(\vec{x}^*,c^*,t)\,d\vec{x}^*dc^*,
\end{equation}
where the regularized bounded confidence kernel has the form:
\begin{align}
\label{wp17}
&{P}_{\theta,\Delta_1,\Delta_2}(\vec{x},\vec{x}^*,c,c^*):={\chi}_{\theta,\Delta_1}\left(\vec{x}-\vec{x}^*\right){\chi}_{\theta,\Delta_2}\left(c-c^*\right).
\end{align}
Thanks to the properties \eqref{wp12}, \eqref{wp14} and to the properties of $G^{\delta}(\cdot)$ \pier{it is} easy to prove the following properties \betti{of} the operator $\Sigma_{\theta}^{\delta}$ defined in \eqref{wp16}.

\begin{lemma}
\label{lem2}
For fixed $t\in [0,T]$, the operator $\Sigma_{\theta}^{\delta}$ is a bounded linear operator which maps $L^1(\Omega \times [0,1])$ in $W^{2,\infty}\left(\Omega \times [0,1];\mathbb{R}^d\right)$, and 
\begin{itemize}
    \item[(i)] \pier{for any $f(t)\in L^1(\Omega \times (0,1))$ it holds that}
    \begin{align*}
    &|\Sigma_{\theta}^{\delta}(f(t))|\leq \Delta_1\lVert f(t)\rVert_{L^1(\Omega \times \betti{(0,1)}}, \;\,  |\div \Sigma_{\theta}^{\delta}(f(t))|,\, \leq \frac{C}{\min(\theta,\delta)}\lVert f(t)\rVert_{L^1(\Omega \times \betti{(0,1)})}, \\
    & |\nabla \Sigma_{\theta}(f(t))|\leq \frac{C}{\min(\theta,\delta)}\lVert f(t)\rVert_{L^1(\Omega \times \betti{(0,1)})},\;\, |\nabla \div \Sigma_{\theta}(f)(t)|\leq \frac{C}{\min(\theta^2,\delta^2)}\lVert f(t)\rVert_{L^1(\Omega \times \betti{(0,1)})}.
    \end{align*}
\end{itemize}
\end{lemma}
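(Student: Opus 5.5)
The plan is to differentiate under the integral sign in \eqref{wp16}. The kernel ${P}_{\theta,\Delta_1,\Delta_2}$ is $C^2$ with bounded derivatives by \eqref{wp12}, \eqref{wp14}, and $G^{\delta}\in W^{2,\infty}(\Omega)$. Linearity of $\Sigma_{\theta}^{\delta}$ in $f$ is immediate. All bounds will come from taking $\sup$ of the kernel factors and integrating $|f(\vec{x}^*,c^*,t)|$. Throughout I will use $|\vec{x}-\vec{x}^*|\le\Delta_1+\sqrt{\theta}$ on the support of $\chi_{\theta,\Delta_1}$, since $H_\theta(\Delta_1^2-|\vec{x}-\vec{x}^*|^2)\neq0$ forces $|\vec{x}-\vec{x}^*|^2<\Delta_1^2$. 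In fact $H_\theta(s)=0$ for $s<0$, so the support lies in $|\vec{x}-\vec{x}^*|\le\Delta_1$. This, together with $0\le P_{\theta}\le1$ and $|G^\delta|\le1$, gives the first bound $|\Sigma_{\theta}^{\delta}(f(t))|\le\Delta_1\lVert f(t)\rVert_{L^1}$ exactly as in Lemma~\ref{lem1}(i).

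For the first derivatives, I write $\partial_{x_i}$ or $\partial_c$ of $G^\delta(\vec{x})P_\theta(\vec{x}-\vec{x}^*)(\vec{x}-\vec{x}^*)$ via the product rule. This produces three types of terms. The first is $\nabla G^\delta\,P_\theta(\vec{x}-\vec{x}^*)$, bounded by $C\delta^{-1}\Delta_1$ by (jjj). The second is $G^\delta\,\nabla_x P_\theta(\vec{x}-\vec{x}^*)$, bounded by $C\theta^{-1}\Delta_1$ by \eqref{wp14}; note $\nabla\chi_{\theta,\Delta_1}=-2(\vec{x}-\vec{x}^*)H_\theta'$, so the chain rule costs only a factor $\Delta_1$. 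The third is $G^\delta P_\theta\,\mathrm{Id}$, bounded by $1$. The $c$-derivative only hits $\chi_{\theta,\Delta_2}$ and gives $C\theta^{-1}$. Integrating against $|f|$ gives $|\nabla\Sigma_\theta^\delta|,|\div\Sigma_\theta^\delta|\le C(1+\theta^{-1}+\delta^{-1})\lVert f\rVert_{L^1}\le C\min(\theta,\delta)^{-1}\lVert f\rVert_{L^1}$. This step uses that $\theta,\delta$ are bounded above, say by $1$, which I take as implicit.

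For second derivatives, I apply the product rule once more. Each term has the form (a derivative of $G^\delta$ of order $j$) $\times$ (a derivative of $P_\theta$ of order $k$) $\times$ (a derivative of $(\vec{x}-\vec{x}^*)$ of order $l$), with $j+k+l=2$ and $l\le1$. The worst terms are bounded by $C\delta^{-2}$, $C\theta^{-2}$, or $C(\theta\delta)^{-1}$, and all of these are at most $C\min(\theta^2,\delta^2)^{-1}$. This yields the bound on $\nabla\div\Sigma_\theta^\delta$ and, more generally, on all second derivatives including mixed $c$-derivatives. Hence $\Sigma_\theta^\delta(f(t))\in W^{2,\infty}$ with norm $\le C(\theta,\delta)\lVert f(t)\rVert_{L^1}$, which is the asserted boundedness of the linear map.

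The only delicate point is justifying differentiation under the integral for $f$ merely in $L^1$. The integrand and its $(\vec{x},c)$-derivatives up to order two are continuous in $(\vec{x},c)$ and dominated by $C(\theta,\delta)|f(\vec{x}^*,c^*,t)|\in L^1$, so dominated convergence applies to the difference quotients. One subtlety is that $G^\delta$ is only $W^{2,\infty}$, so its second derivatives exist only a.e. This is harmless because $G^\delta$ factors out of the integral: I compute the weak derivatives of the product $G^\delta\Sigma_\theta(f)$, where $\Sigma_\theta(f)\in C^2$ with bounded derivatives. I expect no real obstacle beyond this bookkeeping.
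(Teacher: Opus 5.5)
Your proposal is correct and is the argument the paper has in mind. The paper omits the proof and says only that the bounds follow from \eqref{wp12}, \eqref{wp14} and the properties of $G^{\delta}$; that is exactly what you carry out, via the product rule and differentiation under the integral sign, with $L^\infty$ bounds on the derivatives of $G^{\delta}$ and of the compactly supported $C^2$ kernel integrated against $|f|$. Your remark that $\theta,\delta$ must be bounded above, so that the $O(1)$ and mixed terms can be absorbed into $C/\min(\theta,\delta)$ and $C/\min(\theta^2,\delta^2)$, correctly makes explicit an assumption the paper leaves implicit.
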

\noindent
We finally introduce the following regularization $D_{\epsilon}(\cdot)$ of the mobility function $D(\cdot)$:
\begin{equation}
    \label{wp18}
    D_{\epsilon}(s):=D(s)+\sqrt{\epsilon}.
\end{equation}
Then, we introduce the following regularized version of problem \eqref{wp1}-\eqref{wp3}, which depends on the regularization parameters $\theta,\delta,\epsilon>0$: 

\noindent
\textbf{Problem} $\mathcal{P}^{\theta,\delta,\epsilon}$:
find a solution $f:\Omega\times [0,1]\times[0,T]\to \mathbb{R}$ to the equation
\begin{equation}
    \label{wp19}
    \partial_tf-\div \left(\Sigma_{\theta}^{\delta}(f)f\right)-D_{\epsilon}(c)\Delta_xf-\sqrt{\epsilon} \partial^2_cf=0,
\end{equation}
valid in $\Omega\times [0,1]\times[0,T]$, endowed with homogeneous Neumann boundary conditions
\begin{equation}
    \label{wp20}
    \begin{cases}
    \begin{aligned}
    &\Sigma_{\theta}^{\delta}(f)f\cdot \vec{n}+D_{\epsilon}(c)\nabla_xf\cdot \vec{n}=0 && \text{on} \; \partial \Omega\times [0,1]\times[0,T], \\ &\sqrt{\epsilon} \partial_cf\biggr|_{c\in\{0,1\}}=0 && \text{in} \; \Omega\times[0,T],
    \end{aligned}
    \end{cases}
\end{equation}
and with the initial condition
\begin{equation}
    \label{wp21}
    f(\vec{x},c,0)=f_0(\vec{x},c)\quad \forall \vec{x} \in \Omega, c\in [0,1],
\end{equation}
with 
$f_0\geq 0$, $\int_0^1\int_{\Omega}f_0d\vec{x}dc=1$.
\begin{remark}
    \label{rembc}
    Thanks to the property $(i)$ of Lemma~\ref{lem1d}, the boundary conditions \eqref{wp20} are equivalent to
    \begin{equation}
    \label{wp20d}
    \begin{cases}
    \begin{aligned}
    &\nabla_xf\cdot \vec{n}=0 && \text{on} \; \partial \Omega\times [0,1]\times[0,T], \\  &\partial_cf\biggr|_{c\in\{0,1\}}=0 && \text{in} \; \Omega\times[0,T].
    \end{aligned}
    \end{cases}
\end{equation}
\end{remark}

\section{Well posedness of $\mathcal{P}^{\theta,\delta,\epsilon}$}
\pier{Our intention is now to show} the following existence and regularity result for \eqref{wp19}-\eqref{wp21}.
\begin{theorem}
\label{thm1}
Let $\theta,\delta,\epsilon>0$, and assume that
\begin{itemize}
\item[(A1)] $f_0\in L^{2}(\Omega\times (0,1))$, with $f_0\geq 0$ a.e. in $\Omega \times [0,1]$ and $\int_0^1\int_{\Omega}f_0d\vec{x}dc=1$.
\end{itemize}
Then, there exists a unique weak solution $f$ of \eqref{wp19}-\eqref{wp21}, with
\begin{equation}
\label{wp21b}
f\in L^{\infty}(0,T;L^2(\Omega \times (0,1)))\cap L^{2}(0,T;H^1(\Omega \times [0,1]))\cap H^1\bigl(0,T;\left(H^1(\Omega \times [0,1])\right)'\bigr),
\end{equation}
which satisfies the weak formulation
\begin{equation}
    \label{wp22}\
\int_0^T\left<\partial_tf,g\right>\abramo{dt}+\int_0^T(\Sigma_{\theta}^{\delta}(f)f,\nabla_x g)\abramo{dt}+\int_0^T(D_{\epsilon}(c)\nabla_x f,\nabla_x g)
\abramo{dt}+\sqrt{\epsilon}\int_0^T\left(\partial_cf,\partial_cg\right)\abramo{dt}=0
\end{equation}
for all $g\in L^{2}(0,T;H^1(\Omega \times [0,1]))$, \pier{the initial condition} $f(\vec{x},c,0)=f_0(\vec{x},c)$ a.e. in $\Omega \times [0,1]$, and with the further properties that $f\geq 0$ a.e. in $\Omega \times [0,1]\times [0,T]$ and $\int_0^1\int_{\Omega}fd\vec{x}dc=1$. The weak solution enjoys the following continuous dependence property on the initial data: let $f_1^0,f_2^0\in L^{2}(\Omega\times (0,1))$ be two different initial data, and let $f_1,f_2$ be the two corresponding weak solutions, then the following estimate holds
\begin{equation}
    \label{wp23}
    \lVert f_1-f_2\rVert_{L^{\infty}(0,s;L^{2}(\Omega\times (0,1)))}\leq C\lVert f_1^0-f_2^0\rVert_{L^{2}(\Omega\times (0,1))},
\end{equation}
for any $s\in (0,T]$.
\pier{Next, assume that}
\begin{itemize}
\item[(A2)] $a=b=1$ or $a,b\geq 2$ \pier{in the mobility function  $D(c)$ defined by \eqref{wp4}};
\item[(A3)] \pier{$f_0\in H^{1}(\Omega\times [0,1])$, with $f_0\geq 0$ a.e. in $\Omega \times [0,1]$ and $\int_0^1\int_{\Omega}f_0d\vec{x}dc=1$.}
\end{itemize} 
Then, there exists a unique strong solution $f$ of \eqref{wp19}-\eqref{wp21}, with
\begin{equation}
\label{wp23a}
f\in L^{\infty}(0,T;H^1(\Omega \times (0,1)))\cap L^{2}(0,T;H^2(\Omega \times [0,1]))\cap H^1\bigl(0,T;L^2(\Omega \times [0,1])\bigr),
\end{equation}
which satisfies \eqref{wp19}-\eqref{wp21} \pier{almost everywhere.}
Moreover, the strong solution enjoys the following strong continuous dependence property on the initial data: let $f_1^0,f_2^0\in H^{1}(\Omega\times [0,1])$ be two different initial data, and let $f_1,f_2$ be the two corresponding weak solutions, then the following estimate holds
\begin{equation}
    \label{wp23s}
    \lVert f_1-f_2\rVert_{L^{\infty}(0,s;H^{1}(\Omega\times [0,1]))}\leq C\lVert f_1^0-f_2^0\rVert_{H^{1}(\Omega\times [0,1])},
\end{equation}
for any $s\in (0,T]$.
\end{theorem}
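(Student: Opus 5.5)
The plan is to construct solutions by a Galerkin or semi-discrete scheme, prove the estimates uniformly in the discretization, and pass to the limit. The paper uses finite differences; I would use a Faedo--Galerkin basis of Neumann eigenfunctions of $-\Delta$ on $\Omega\times(0,1)$, which respects the boundary conditions \eqref{wp20d}. The key observation is that the drift is only mildly nonlinear. Once positivity and mass conservation are established, $\|f(t)\|_{L^1}=1$, and Lemma~\ref{lem2} gives uniform bounds on $\Sigma_\theta^\delta(f)$ and on its first and second derivatives, with constants depending on $\theta$ and $\delta$. So, at fixed $\theta,\delta,\epsilon$, the problem behaves like a linear uniformly parabolic equation with $W^{2,\infty}$ drift. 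Indeed, $D_\epsilon\geq\sqrt\epsilon$, and there is diffusion $\sqrt\epsilon$ in $c$.

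\textbf{Weak existence.} First I would decouple the nonlinearity: fix $\hat f\in L^\infty(0,T;L^1)$ with $\|\hat f(t)\|_{L^1}\le 1$ and solve the linear problem with drift $\Sigma_\theta^\delta(\hat f)$. Testing with $f$ and integrating the drift term by parts gives
\[
(\Sigma f,\nabla_x f)=-\tfrac12(\div\Sigma,f^2),
\]
with no boundary term, since $G^\delta=0$ on $\partial\Omega$. This is where the regularization $\delta$ pays off, and $|\div\Sigma|\le C/\min(\theta,\delta)$. The diffusion term $(D_\epsilon\nabla_x f,\nabla_x f)$ is fine. Because $D_\epsilon$ depends only on $c$ and the equation is in non-divergence form in $x$, the weak form \eqref{wp22} places $D_\epsilon(c)$ outside the $x$-derivatives consistently. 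Gronwall then gives the $L^\infty L^2\cap L^2H^1$ bound, and comparison gives the $H^1((H^1)')$ bound.

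Next I would establish positivity and mass. Testing with $f^-$ shows $f\ge0$; the drift term is again controlled by $\div\Sigma$. Testing with $g\equiv1$ gives mass conservation. The map $\hat f\mapsto f$ therefore preserves the set $\{f\ge0,\ \int f=1\}$.

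A contraction argument in $L^\infty(0,\tau;L^1)$ or $L^2$ on short time intervals, iterated since $\tau$ does not depend on the data, yields the fixed point. It uses
\[
\|\Sigma(f_1)-\Sigma(f_2)\|_{L^\infty}\le \Delta_1\|f_1-f_2\|_{L^1}.
\]
The same difference estimate gives \eqref{wp23}. Testing with $w=f_1-f_2$, the cross term
\[
((\Sigma(f_1)-\Sigma(f_2))f_1,\nabla_x w)
\]
is bounded by $C\|w\|\,\|f_1\|\,\|\nabla_x w\|$. It is absorbed into the diffusion term and closed by Gronwall, using $f_1\in L^\infty L^2$.

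\textbf{Strong regularity.} I would test with $-\Delta f$, or equivalently take the gradient and test with $\nabla f$, using the Neumann conditions. Two new terms appear.

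The first is the commutator from $\partial_c$ acting on $D_\epsilon(c)\Delta_x f$, namely $(D'(c)\Delta_x f,\partial_c f)$. Assumption (A2) is needed exactly here: it makes $D'^2/D$ bounded, which holds for $a,b\ge2$. For $a=b=1$, $D'$ is bounded and an integration by parts in $x$ and $c$ can be used instead. Either way, the term is absorbed by $\int D_\epsilon|\nabla_x\partial_c f|^2$ and $\epsilon$-terms.

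The second is the drift, $\nabla(\Sigma f)=\nabla\Sigma\, f+\Sigma\nabla f$, together with its divergence. It is controlled by the bounds on $\nabla\Sigma$ and $\nabla\div\Sigma$ from Lemma~\ref{lem2}.

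Boundary terms from testing with $-\Delta f$ on a curved $\partial\Omega$ involve curvature terms. I would handle these through $H^2$ elliptic regularity of the Neumann Laplacian on the cylinder $\Omega\times(0,1)$, using the trace inequality and interpolation. Together these give $L^\infty H^1\cap L^2H^2$, and $\partial_t f\in L^2L^2$ by comparison.

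\textbf{Main obstacle.} I expect the hard step to be the strong estimate: the $c$-commutator involving $D'$, and the boundary/elliptic regularity argument on the cylinder with anisotropic coefficient $D_\epsilon(c)$. For the elliptic part I would freeze $t$ and apply regularity for $-D_\epsilon(c)\Delta_x-\sqrt\epsilon\partial_c^2$, which is uniformly elliptic with coefficients Lipschitz in $c$.

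Finally, \eqref{wp23s} follows by repeating the $H^1$ estimate on the difference. Here the cross terms use $\nabla f_1\in L^2H^1$ and Sobolev embeddings valid for $d+1\le4$.
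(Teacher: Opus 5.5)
Your proposal is correct, and it takes a genuinely different route from the paper.

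\textbf{The paper's route.} The paper builds the solution from the semi-implicit Rothe scheme \eqref{wp24}, in which the drift $\Sigma_\theta^\delta(f^n)$ is frozen at the previous time step. Each step is solved by Lax--Milgram under $\Delta t<4\sqrt{\epsilon}/\Delta_1^2$. Unit mass comes from testing with $1$, and positivity from testing with $\xi_\rho'(f^{n+1})$, which gives $\int|f^{n+1}|\le\int|f^n|=1=\int f^{n+1}$. The nonlinear drift term is passed to the limit through Aubin--Lions compactness. To make the $-\Delta f^{n+1}$ computations meaningful, the paper regularizes $f_0$ and invokes $H^3$ regularity of the iterates (Remark~\ref{rem2}).

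\textbf{Your route.} You freeze the drift at the continuous level and rely on classical linear parabolic theory. Positivity comes from Stampacchia truncation with $f^-$. You close with a Banach contraction based on $\|\Sigma_\theta^\delta(f_1)-\Sigma_\theta^\delta(f_2)\|_{L^\infty}\le\Delta_1\|f_1-f_2\|_{L^1}$, which gives existence and uniqueness at once, with no compactness argument. The product Neumann eigenbasis $\phi_k(\vec{x})\cos(j\pi c)$ makes $-\Delta f_m$ an admissible Galerkin test function, so your $H^1$ computations are rigorous as written. The contraction time $\tau$ does depend on $\|f_0\|$, but only through the global bound $\sup_{t\le T}\|f(t)\|\le e^{CT}\|f_0\|$, so the iteration works.

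\textbf{What each buys.} The paper's scheme is reusable. Its $L^2$ bounds are uniform in $\theta,\delta$, because they use only $\|\Sigma_\theta^\delta\|_{L^\infty}\le\Delta_1$ and absorb the rest into the $\sqrt{\epsilon}$-diffusion. The same discrete problem is then reused in Theorem~\ref{thm4} (with $\theta=\delta=\Delta t$) and Lemma~\ref{lem7} (with $\epsilon=\Delta t$). Your bound via $\|\div\Sigma_\theta^\delta\|_{L^\infty}\le C/\min(\theta,\delta)$ is not uniform in $\theta,\delta$, though switching to Young's inequality fixes that.

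\textbf{Two minor remarks on the strong estimate.}
\begin{enumerate}
\item Testing with $-\Delta f$ produces no curvature terms. Those arise only in the variant where you differentiate and test with $\nabla f$. The full $H^2$ norm then comes from elliptic regularity, as in the paper.
\item At fixed $\epsilon$, the cross term $(D'\nabla_x f,\nabla_x\partial_c f)$ is absorbed directly by $\sqrt{\epsilon}\|\nabla_x\partial_c f\|^2\le(D_\epsilon,|\nabla_x\partial_c f|^2)$. Only boundedness of $D'$ (e.g.\ $a,b\ge1$) is used at this stage. For $a=b=1$ you should absorb it this way rather than integrate by parts in $c$. There $D'(0)=1\neq0$, so integrating by parts leaves the contribution $-\tfrac12\int_\Omega\bigl(|\nabla_x f(\cdot,0)|^2+|\nabla_x f(\cdot,1)|^2\bigr)\,d\vec{x}$ on the left-hand side, with the wrong sign. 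The computation \eqref{wp30cc} overlooks this term.
\end{enumerate}
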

    In the following we will prove Theorem~\ref{thm1} through different steps, starting by formulating a time-discretization scheme and then studying its limit as the discretization parameter tends to zero. Given $N\in \mathbb{N}^+$, we consider a uniform partition of the interval $[0,T]$ into $N$ sub-intervals of width $\Delta t=\frac{T}{N}$, with $N+1$ extremal nodes $t^n=n\Delta t$, $n=0,\dots,N$. For a given $f:\Omega\times [0,1]\times [0,T]\to \mathbb{R}$, we introduce the notation
    \[
    f(\vec{x},c,t^n)=:f^n(\vec{x},c).
    \]
    We also introduce the function $\tilde{f}_0\in H^1(\Omega\times [0,1])$ which, given $f_0\in L^2(\Omega\times (0,1))$, is the unique solution of the variational problem
    \[
    (\tilde{f}_0,\chi)+\Delta t(\nabla \tilde{f}_0,\nabla \chi)=(f_0,\chi),
    \]
    for all $\chi\in H^1(\Omega\times [0,1])$. By a Lax--Milgram estimate we observe that
    \begin{equation}
        \label{wp23b}
        \lVert \nabla \tilde{f}_0\rVert^2\leq \frac{\lVert{f}_0\rVert^2}{2\Delta t}\leq \frac{C}{\Delta t}.
    \end{equation}
    Moreover, we have that $\tilde{f}_0\to {f}_0$ strongly in $L^2(\Omega\times (0,1))$ as $\Delta t\to 0$.
    Then, starting from the initial condition 
    \begin{equation}
    \label{wp23c}
    f^0:=
    \begin{cases}
    \tilde{f}_0 \quad \text{if} \quad f_0\in L^2(\Omega\times (0,1)),\\
    f_0 \quad \text{if} \quad f_0\in H^1(\Omega\times [0,1]),
    \end{cases}
    \end{equation}
    for $n\in \{0,\dots,N-1\}$, we define the following time-discrete approximation of \eqref{wp19}-\eqref{wp21}:

    \noindent
    \textbf{Problem} $\mathcal{P}_n^{\theta,\delta,\epsilon}$: given $f^n\in L^2(\Omega\times (0,1))$, with $f^n\geq 0$ a.e. in $\Omega\times [0,1]$ and $\int_0^1\int_{\Omega}f^n\,d\vec{x}dc=1$,
find a solution $f^{n+1}$ to the equation
\begin{equation}
    \label{wp24}
    \frac{f^{n+1}-f^n}{\Delta t}-\div \left(\Sigma_{\theta}^{\delta}(f^n)f^{n+1}\right)-D_{\epsilon}(c)\Delta_xf^{n+1}-\sqrt{\epsilon}  \partial^2_cf^{n+1}=0,
\end{equation}
valid in $\Omega\times [0,1]$, endowed with the boundary conditions
\begin{equation}
    \label{wp25}
    \begin{cases}
    \begin{aligned}
    &\Sigma_{\theta}^{\delta}(f^n)f^{n+1}\cdot \vec{n}+D_{\epsilon}(c)\nabla_xf^{n+1}\cdot \vec{n}=0 && \text{on} \; \partial \Omega\times [0,1], \\ 
    & \sqrt{\epsilon} \partial_c f^{n+1}\biggr|_{c\in\{0,1\}}=0 && \text{in} \; \Omega,
    \end{aligned}
    \end{cases}
\end{equation}
which, thanks to Remark~\ref{rembc}, are equivalent to 
\begin{equation}
    \label{wp25d}
    \begin{cases}
    \begin{aligned}
    & \nabla_xf^{n+1}\cdot \vec{n}=0 && \text{on} \; \partial \Omega\times [0,1], \\ 
    & \partial_c f^{n+1}\biggr|_{c\in\{0,1\}}=0 && \text{in} \; \Omega.
    \end{aligned}
    \end{cases}
\end{equation}
The following existence and regularity theorem is valid for \eqref{wp24}-\eqref{wp25d}.
\begin{theorem}
    \label{thm2}
     For any $n\in \{0,\dots,N-1\}$, given $\epsilon>0$ and \abramo{$f^n\in H^1(\Omega\times [0,1])$}, with $f^n\geq 0$ a.e. in $\Omega\times [0,1]$ and $\int_0^1\int_{\Omega}f^n\,d\vec{x}dc=1$, if $\Delta t<\frac{4\sqrt{\epsilon}}{\Delta_1^2}$ there exists a unique weak solution $f^{n+1}$ to \eqref{wp24}-\eqref{wp25} (uniformly in $\theta$ and $\delta$) such that $f^{n+1}\in H^1({\Omega}\times [0,1])$, with $f^{n+1}\geq 0$ in $\Omega\times [0,1]$ and $\int_0^1\int_{\Omega}f^{n+1}\,d\vec{x}dc=1$. Moreover, given $\epsilon,\delta>0$, $f^{n+1}\in H^2(\bar{\Omega}\times [0,1])$ (uniformly in $\theta$), i.e. it is a strong solution to \eqref{wp24}-\eqref{wp25d}.
\end{theorem}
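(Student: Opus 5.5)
The plan is to treat \eqref{wp24}--\eqref{wp25d} as a linear elliptic problem for $f^{n+1}$. Since $f^n$ is given, $\vec{b}:=\Sigma_{\theta}^{\delta}(f^n)$ is a fixed vector field. By Lemma~\ref{lem2}, $\vec{b}\in W^{2,\infty}$. By Lemma~\ref{lem1d}, $|\vec{b}|\le\Delta_1\lVert f^n\rVert_{L^1}=\Delta_1$, independently of $\theta$ and $\delta$, and $\vec{b}=\mathbf{0}$ on $\partial\Omega$.

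\textbf{Weak formulation and coercivity.} Test with $g\in H^1(\Omega\times[0,1])$ and define
\[
a(f,g):=\tfrac{1}{\Delta t}(f,g)+(\vec{b}f,\nabla_xg)+(D_\epsilon\nabla_xf,\nabla_xg)+\sqrt\epsilon(\partial_cf,\partial_cg),
\]
with right-hand side $\tfrac{1}{\Delta t}(f^n,g)$. Continuity of $a$ follows from $|\vec{b}|\le\Delta_1$ and $D_\epsilon\le C$. For coercivity I use only the uniform bound on $\vec{b}$, never its derivatives. Young's inequality gives
\[
|(\vec{b}f,\nabla_xf)|\le\sqrt\epsilon\lVert\nabla_xf\rVert^2+\tfrac{\Delta_1^2}{4\sqrt\epsilon}\lVert f\rVert^2,
\]
and $D_\epsilon\ge\sqrt\epsilon$. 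Hence
\[
a(f,f)\ge\Bigl(\tfrac1{\Delta t}-\tfrac{\Delta_1^2}{4\sqrt\epsilon}\Bigr)\lVert f\rVert^2+\sqrt\epsilon\lVert\partial_cf\rVert^2,
\]
which still lacks a term in $\nabla_xf$. To recover one, I split the term as $\sqrt\epsilon=\tfrac{\sqrt\epsilon}{2}+\tfrac{\sqrt\epsilon}{2}$, which requires a slightly better constant than the hypothesis gives. Alternatively, I note that $D_\epsilon=D+\sqrt\epsilon$ and absorb only part of the $\nabla_xf$ term, shrinking the threshold infinitesimally. If this is insufficient, I would use Lax--Milgram for $\Delta t$ strictly below the threshold, as the hypothesis is strict, by choosing the Young parameter $\eta$ close to $1$. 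Lax--Milgram then gives existence and uniqueness in $H^1$ with bounds independent of $\theta$ and $\delta$.

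\textbf{Mass and nonnegativity.} Mass conservation follows by testing with $g\equiv1$. For nonnegativity, test with $g=f_-:=\min(f^{n+1},0)$. The $f^n$ term gives $(f^n,f_-)\le0$, because $f^n\ge0$. The remaining terms form the same coercive form restricted to $f_-$. Therefore $a(f_-,f_-)\le0$, so $f_-=0$.

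\textbf{$H^2$ regularity.} This is the main obstacle, because the coefficient $D_\epsilon(c)$ multiplies only $\Delta_x$ and the operator is not in divergence form in $c$. I rewrite the equation as
\[
-D_\epsilon\Delta_xf-\sqrt\epsilon\,\partial_c^2f=F:=-\tfrac{f-f^n}{\Delta t}+(\nabla_x\cdot\vec{b})f+\vec{b}\cdot\nabla_xf,
\]
so that $F\in L^2$, with a $\delta$-dependent but $\theta$-independent bound. The $\theta$-independence needs $\nabla_x\cdot\vec{b}$ bounded without the factor $1/\theta$. I would argue this by differentiating the unregularized form \eqref{wp5b}, or the mollified kernel, under the integral: the derivative of a convolution with a bounded kernel against $f^n\in H^1$ can be placed on $f^n$, giving a bound in terms of $\lVert f^n\rVert_{H^1}$ and the factor $\lVert\nabla G^\delta\rVert\le C/\delta$.

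Dividing by $D_\epsilon\ge\sqrt\epsilon$ gives
\[
-\Delta_xf-\tfrac{\sqrt\epsilon}{D_\epsilon(c)}\partial_c^2f=\tfrac{F}{D_\epsilon}.
\]
This is a uniformly elliptic, nondivergence operator with a coefficient that depends only on $c$, is Lipschitz and is bounded below, posed on the product domain $\Omega\times(0,1)$ with Neumann conditions on both faces. I would obtain $H^2$ estimates by the standard difference-quotient method: tangential and $c$-directional quotients, with the corners handled by the product structure and by even reflection across $c=0,1$. Alternatively, I would test with $-\partial_c^2f$ and $-\Delta_xf$. Integrating by parts, the cross term $(\Delta_xf,\partial_c^2f)$ equals $\lVert\nabla_x\partial_cf\rVert^2$, with the boundary terms vanishing by \eqref{wp25d}. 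The commutator coming from $D_\epsilon'(c)$ is bounded by $\lVert\nabla_xf\rVert\,\lVert\nabla_x\partial_cf\rVert$ and is absorbed. The boundary curvature term from $\Delta_x$ is controlled as in Grisvard's convex/smooth domain estimates.
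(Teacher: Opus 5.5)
Your proof is correct and follows the paper's skeleton: the same bilinear form, Lax--Milgram, and mass conservation by testing with $1$. Your Young parameter $\eta<1$ close to $1$ is exactly the paper's choice of $\rho$, and the strict inequality $\Delta t<4\sqrt{\epsilon}/\Delta_1^2$ is what makes it work, so the hedging in that paragraph is unnecessary.

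You depart from the paper in the nonnegativity step. You use the Stampacchia truncation $g=f_-$. Since $f^{n+1}\nabla_x f_-=f_-\nabla_x f_-$ a.e., you get $a(f_-,f_-)=\frac{1}{\Delta t}(f^n,f_-)\le 0$, and coercivity forces $f_-=0$. The paper instead tests with $\xi_\rho'(f^{n+1})$, where $\xi_\rho$ is a convex $C^2$ regularization of $|\cdot|$. It uses $\xi_\rho''(s)s^2\le C\rho$ to obtain the $L^1$ contraction $(|f^{n+1}|,1)\le(|f^n|,1)$ in \eqref{wp27}--\eqref{wp28}, and combines this with mass conservation. Your route is shorter and relies only on the coercivity you have already proved. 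The paper's route gives the quantitative $L^1$ estimate \eqref{wp27}, independent of the coercivity constant, which it reuses with $\rho=\epsilon$ in Lemma~\ref{lem5}. Pointwise rather than a.e.\ nonnegativity needs continuity, which the paper takes from the $H^3$ regularity of Remark~\ref{rem:h3}.

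For $H^2$, the paper just invokes elliptic regularity with $\mathcal{G}(f^{n+1},f^n)\in L^2$, bounded through Lemma~\ref{lem2}(i). That bound carries the constant $C/\min(\theta,\delta)$. Your remark that $\theta$-uniformity requires moving the $\vec{x}$-derivative onto $f^n\in H^1$ is therefore more careful than the paper. On the bounded domain $\Omega$, however, this integration by parts produces an extra integral over $\partial\Omega$ involving the trace of $f^n$; it is still bounded by $C\lVert f^n\rVert_{H^1}$.

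There is one flaw in your regularity sketch. Theorem~\ref{thm2} does not assume $(A2)$, and for $a<1$ or $b<1$ the mobility $D(c)=c^a(1-c)^b$ is not Lipschitz. Hence neither the $c$-difference quotients nor the commutator with $D_\epsilon'$ is controlled as you claim.

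You do not need them. Since $D_\epsilon$ does not depend on $\vec{x}$, the operator is in fact in divergence form, $-\nabla\cdot\bigl(\mathrm{diag}(D_\epsilon(c)I_d,\sqrt{\epsilon})\nabla f\bigr)$. Test $-D_\epsilon\Delta_x f-\sqrt{\epsilon}\,\partial_c^2 f=F$ with $-\Delta_x f$ alone and use \eqref{wp25d}. This gives $\sqrt{\epsilon}\lVert\Delta_x f\rVert^2+\sqrt{\epsilon}\lVert\nabla_x\partial_c f\rVert^2\le\lVert F\rVert\,\lVert\Delta_x f\rVert$, with no derivative of $D_\epsilon$. Then $\partial_c^2 f\in L^2$ follows by comparison, and $\nabla_x^2 f\in L^2$ by Neumann regularity on $\Omega$. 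To make this rigorous, expand in the Neumann eigenfunctions of $-\Delta_x$ on $\Omega$. This decouples the problem into Neumann ODEs in $c$ and also handles the edges $\partial\Omega\times\{0,1\}$ of the product domain.
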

\begin{proof}
The existence of a unique weak solution $f^{n+1}\in H^1(\Omega\times [0,1])$ to \eqref{wp24}-\eqref{wp25} for a sufficiently small value of $\Delta t$ is a consequence of the Lax--Milgram lemma. In particular, starting from \abramo{$f^n\in H^1(\Omega\times [0,1])$}, with $f^n\geq 0$ a.e. in $\Omega \times [0,1]$ and $\int_0^1\int_{\Omega}f^n\,d\vec{x}dc=1$, multiplying \eqref{wp24} by a function $\chi \in H^1(\Omega \times [0,1])$ and integrating over $\Omega \times [0,1]$ we obtain, after integration by parts and employing the boundary conditions \eqref{wp25}, that
\begin{equation}
\label{wp26}
\mathcal{A}_n(f^{n+1},\chi)=\mathcal{F}_n(\chi),
\end{equation}
where $\mathcal{A}_n: H^1(\Omega\times [0,1])\times H^1(\Omega\times [0,1])\to \mathbb{R}$ is the bilinear form associated to the weak formulation of \eqref{wp24} defined as
\[
\mathcal{A}_n(\psi,\chi):=\left(\frac{\psi}{\Delta t},\chi\right)+(\Sigma_{\theta}^{\delta}(f^n)\psi,\nabla_x \chi)+ (D_{\epsilon}(c)\nabla_x\psi,\nabla_x \chi)+\sqrt{\epsilon}\left(\partial_c \psi,\partial_c \chi\right),
\]
and $\mathcal{F}_n:H^1(\Omega\times [0,1])\to \mathbb{R}$ is defined as
\[
\mathcal{F}_n(\chi):=\left(\frac{f^{n}}{\Delta t},\chi\right).
\]
Due to the fact that $f^n\in H^1(\Omega\times [0,1])$, we observe that $\mathcal{F}_n$ is a linear functional over $H^1(\Omega\times [0,1])$. Moreover, the continuity of $\mathcal{A}_n$ is a consequence of property $(ii)$ 
in Lemma~\ref{lem1d}. Indeed, thanks to the facts that $f^n\geq 0$ a.e. in $\Omega \times [0,1]$ and that $\int_0^1\int_{\Omega}f^n\,d\vec{x}dc=1$, from property $(ii)$ in Lemma~\ref{lem1d} we have that $\lVert \Sigma_{\theta}^{\delta}(f^n)\rVert_{L^{\infty}(\Omega\times (0,1))}\leq \Delta_1$. Hence, \pier{it is} easy to prove that there exists $C>0$ independent on $\Delta t$ such that
\[
|\mathcal{A}_n(\psi,\chi)|\leq C \lVert \psi\rVert_{H^1(\Omega\times [0,1])}\lVert \chi\rVert_{H^1(\Omega\times [0,1])}, \quad \forall \psi,\chi\in H^1(\Omega\times [0,1]).
\]
Moreover, we have, for any $\pier{\rho}>0$, that
\begin{align*}
    &\mathcal{A}_n(\psi,\psi)\geq \frac{\lVert \psi\rVert^2}{\Delta t}+ \inf_{c\in [0,1]}D_{\epsilon}(c)\lVert\nabla_x \psi\rVert^2+\sqrt{\epsilon}\lVert \partial_c \psi\rVert^2-\lVert \Sigma_{\theta}^{\delta}(f^n)\Vert_{L^{\infty}(\Omega\times (0,1))}\int_0^1\int_{\Omega}|\psi|\,|\nabla_x \psi|\abramo{d\mathbf{x}dc}\\
    & \quad \geq (\sqrt{\epsilon}-\pier{\rho}  \Delta_1)\lVert \nabla \psi \rVert^2+\left(\frac{1}{\Delta t}-\frac{\Delta_1}{4\pier{\rho}}\right)\lVert\psi\rVert^2,
\end{align*}
hence, for $\Delta t<\frac{4\sqrt{\epsilon}}{\Delta_1^2}$, there exists $C>0$ depending on $\Delta t$ such that
\[
\mathcal{A}_n(\psi,\psi)\geq C\lVert \psi\rVert_{H^1(\Omega\times [0,1])}^{\pier{2}}\quad \forall \psi\in H^1(\Omega\times [0,1]).
\]
Then, by application of the Lax--Milgram lemma there exists a unique $f^{n+1}\in H^1(\Omega\times [0,1])$ solution of \eqref{wp26}. 
Taking $\chi\equiv 1$ in \eqref{wp26} we deduce that $\int_0^1\int_{\Omega}f^{n+1}\,d\vec{x}dc=\int_0^1\int_{\Omega}f^{n}\,d\vec{x}dc=1$.
Moreover, integrating by parts and employing the property $(i)$ 
in Lemma~\ref{lem1d}, we rewrite \eqref{wp26} as
\begin{equation}
\label{wp26b}
 (D_{\epsilon}(c)\nabla_xf^{n+1},\nabla_x \chi)+\sqrt{\epsilon}\left(\partial_c f^{n+1},\partial_c \chi\right)=(\mathcal{G}(f^{n+1},f^n),\chi),
\end{equation}
where
\[
\mathcal{G}(f^{n+1},f^n):=\frac{f^n-f^{n+1}}{\Delta t}+\div(\Sigma_{\theta}^{\delta}(f^n)f^{n+1}),
\]
and we observe that $(\mathcal{G}(f^{n+1},f^n),1)=0$. Also, as a consequence of the facts that $f^{n+1}\in H^1(\Omega \times [0,1])$ and \abramo{$f^n\in H^1(\Omega \times [0,1])$} and thanks to property $(i)$ of Lemma~\ref{lem2}, it holds that 
\[
\mathcal{G}(f^{n+1},f^n)\in L^2(\Omega\times (0,1)).
\]
Hence, by elliptic regularity, thanks to the smoothness assumptions on $\partial \Omega$, we have that
$f^{n+1}\in H^2(\bar{\Omega}\times [0,1])$.  
This means that the unique weak solution of \eqref{wp26} is actually a strong solution and satisfies \eqref{wp24} a.e. in $\Omega\times [0,1]$, with \eqref{wp25} satisfied in the sense of traces and almost everywhere. Note that this is valid only in the case $\epsilon,\delta>0$, while it is independent on the value of the parameter $\theta.$ \pier{Before proceeding in the proof, now we point out a remark.}
\begin{remark}
    \label{rem:h3}
    \abramo{Since $f^n\in H^1(\Omega \times [0,1])$ for any $n\in \{0,\dots,N-1\}$}, thanks to the fact that $f^{n+1}\in H^2(\bar{\Omega}\times [0,1])$, we have that \abramo{$\mathcal{G}(f^{n+1},f^n)\in H^1(\Omega\times [0,1])$}. Hence, a further application of elliptic regularity gives that $f^{n+1}\in H^3(\bar{\Omega}\times [0,1])$.
\end{remark}

We \pier{continue} by taking $\chi=\xi'_{\rho}(f^{n+1})$, for a given $\rho>0$, in \eqref{wp26}. We obtain that
\begin{align*}
    &(f^{n+1}-f^n,\xi'_{\rho}(f^{n+1}))+\Delta t (D_{\epsilon}(c)\xi''_{\rho}(f^{n+1})\nabla_x f^{n+1},\nabla_x f^{n+1})+\Delta t \sqrt{\epsilon}\left(\xi''_{\rho}(f^{n+1})\partial_c f^{n+1},\partial_c f^{n+1}\right)\\
    & \quad = \Delta t(\Sigma_{\theta}^{\delta}(f^n)f^{n+1},\xi''_{\rho}(f^{n+1})\nabla_x f^{n+1}).
\end{align*}
Using the Cauchy--Schwarz and the Young inequalities and the convexity of $\xi_{\rho}(\cdot)$ we obtain that
\begin{align*}
    & (\xi_{\rho}(f^{n+1}),1)+\Delta t \sqrt{\epsilon}(\xi''_{\rho}(f^{n+1})\nabla_x f^{n+1},\nabla_xf^{n+1})+\Delta t \sqrt{\epsilon}\left(\xi''_{\rho}(f^{n+1})\partial_c f^{n+1} ,\partial_c f^{n+1}\right)\\
    & \quad \leq (\xi_{\rho}(f^{n}),1)+\frac{\Delta t}{2} \sqrt{\epsilon}(\xi''_{\rho}(f^{n+1})\nabla_x f^{n+1},\nabla_xf^{n+1})
    \pier{{}+\frac{\Delta t}{2\sqrt{\epsilon}}\left(\xi''_{\rho}(f^{n+1})|f^{n+1}|^2,|\Sigma_{\theta}^{\delta}(f^{n})|^2\right)}.
\end{align*}
Using property $(ii)$ of Lemma~\ref{lem1d}, the hypotheses on $f^n$ and the definition of $\xi_{\rho}$ we have that
\begin{align}
\label{wp27}
    & \notag (\xi_{\rho}(f^{n+1}),1)+\frac{\Delta t}{2} \sqrt{\epsilon}(\xi''_{\rho}(f^{n+1})\nabla_x f^{n+1},\nabla_xf^{n+1})+ \pier{\Delta t}
    \sqrt{\epsilon}\left(\xi''_{\rho}(f^{n+1})\partial_c f^{n+1} ,\partial_c f^{n+1}\right)\\
    & \notag \quad \leq (\xi_{\rho}(f^{n}),1)+ \frac{\Delta t}{2\sqrt{\epsilon}}\lVert\Sigma_{\theta}^{\delta}(f^{n})\rVert_{L^{\infty}(\Omega\times (0,1);\mathbb{R}^d)}^2\left(\xi''_{\rho}(f^{n+1})|f^{n+1}|^2,1\right)\\
    & \notag \quad \leq (\xi_{\rho}(f^{n}),1)+ \frac{\Delta t\Delta_1^2}{2\sqrt{\epsilon}}\lVert f^n\rVert_{L^1(\Omega\times [0,1])}^2\int_0^1\int_{\Omega}\frac{3\left(\rho^2-(f^{n+1})^2\right)}{2\rho^3}|f^{n+1}|^2\mathbbm{1}_{\{|f^{n+1}|\leq \rho\}}\abramo{d\mathbf{x}dc}\\
    & \quad \leq (\xi_{\rho}(f^{n}),1) +
    \pier{\frac{C\Delta t}{\sqrt{\epsilon}} \rho
    \left(|f^n|, 1\right)^2,}
\end{align}
\pier{where the constant $C$ is independent of $\epsilon$ and $\rho$.}
Taking the limit in \eqref{wp27} as $\rho\to 0$ we finally obtain that
\begin{equation}
    \label{wp28}
    (|f^{n+1}|,1)\leq (|f^{n}|,1)=1.
\end{equation}
As a consequence, we observe that
\[
1=\int_0^1\int_{\Omega}f^{n+1}\abramo{d\mathbf{x}dc}\leq \int_0^1\int_{\Omega}|f^{n+1}|\abramo{d\mathbf{x}dc}\leq \int_0^1\int_{\Omega}|f^{n}|\abramo{d\mathbf{x}dc}=1,
\]
and hence 
\[
\int_0^1\int_{\Omega}f^{n+1}\abramo{d\mathbf{x}dc}=\int_0^1\int_{\Omega}|f^{n+1}|\abramo{d\mathbf{x}dc}=1.
\]
This implies that $f^{n+1}\geq 0$ a.e. in $\Omega\times [0,1]$, and due to the continuity property $f^{n+1}\in C^0(\bar{\Omega}\times [0,1])$ \pier{(cf.~Remark~\ref{rem:h3})} we \pier{deduce} that $f^{n+1}\geq 0$ for all $(\vec{x},c)\in \bar{\Omega}\times [0,1]$.

\noindent
Summarizing, we have the following
existence result for problem $\mathcal{P}_n^{\theta,\delta,\epsilon}$: given \abramo{$f^n\in H^1(\Omega\times [0,1])$}, with $f^n\geq 0$ a.e. in $\Omega\times [0,1]$ and $\int_0^1\int_{\Omega}f^n\,d\vec{x}dc=1$, if $\Delta t<\frac{4\sqrt{\epsilon}}{\Delta_1^2}$
there exists a unique weak solution $f^{n+1}$ to \eqref{wp24}-\eqref{wp25} such that $f^{n+1}\in H^1({\Omega}\times [0,1])$, with $f^{n+1}\geq 0$ in $\Omega\times [0,1]$ and $\int_0^1\int_{\Omega}f^{n+1}\,d\vec{x}dc=1$, which is a strong solution.
With the latter existence result, we can inductively proceed from $n=0$, considering that \abramo{$f^0\in H^1(\Omega\times [0,1])$,} 
with $f^0\geq 0$ a.e. in $\Omega\times [0,1]$ and $\int_0^1\int_{\Omega}f^0\,d\vec{x}dc=1$, to assert that, for any $n\in \{0,\dots,N-1\}$, if $\Delta t<\frac{4\sqrt{\epsilon}}{\Delta_1^2}$
there exists a unique weak solution $f^{n+1}$ to \eqref{wp24}-\eqref{wp25} such that $f^{n+1}\in H^1({\Omega}\times [0,1])$, with $f^{n+1}\geq 0$ in $\Omega\times [0,1]$ and $\int_0^1\int_{\Omega}f^{n+1}\,d\vec{x}dc=1$, which is a strong solution.
\end{proof}

The result \eqref{wp28} represents a first a-priori estimate which is independent \pier{of} $\Delta t$, $\theta$, and $\delta$, but depends on $\epsilon^{-1/2}$. In the following lemma we derive further a-priori estimates uniform in the parameter $\Delta t$ which will let us characterize the limit problem obtained as $\Delta t\to 0$. 
\begin{lemma}
    \label{lem3}
    The following a-priori estimates, which are uniform in the parameters $\Delta t$, $\theta$ and $\delta$, are valid for the weak solution of \eqref{wp24}-\eqref{wp25} under Assumption $(A1)$:
    \begin{equation}
    \label{wp30}
    \displaystyle \sup_{n\in\{1,\dots,N\}}\lVert f^{n}\rVert^2\leq C(T,\epsilon), \quad \Delta t\sum_{n=0}^{N-1}\lVert \nabla f^{n+1}\rVert^2\leq C(T,\epsilon),
\end{equation}
where the constants $C(T,\epsilon)\to +\infty$ as $\epsilon \to 0^+$. Moreover, under Assumption $(A3)$, the following a-priori estimates, which are uniform in the parameter $\Delta t$, are valid for the weak solution of \eqref{wp24}-\eqref{wp25}:
\begin{align}
    \label{wp30c}
    & \sum_{n=0}^{N-1}\lVert f^{n+1}-f^n\rVert^2\leq  C\left(T,\epsilon,\delta,\theta\right)\Delta t, \quad \sum_{n=0}^{N-1} \lVert \nabla (f^{n+1}-f^n)\rVert^2\leq  C\left(T,\epsilon,\delta,\theta\right),
\end{align}
where the constants $C(T,\epsilon,\delta,\theta)\to +\infty$ as $\epsilon,\delta,\theta \to 0^+$.
Finally, under Assumptions \pier{$(A2)$--$(A3)$} the following a-priori estimates, which are uniform in the parameter $\Delta t$, are valid for the strong solution of \eqref{wp24}-\eqref{wp25}:
\begin{align}
    \label{wp33}
    & \displaystyle \sup_{n\in\{1,\dots,N\}}\lVert \nabla f^{n}\rVert^2\leq  C(T,\epsilon,\delta,\theta), \quad \Delta t\sum_{n=0}^{N-1}\lVert \Delta f^{n+1}\rVert^2\leq  C(T,\epsilon,\delta,\theta), 
\end{align}
where the constants $C(T,\epsilon,\delta,\theta)\to +\infty$ as $\epsilon,\delta,\theta \to 0^+$.
\end{lemma}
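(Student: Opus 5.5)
Three discrete energy estimates, each obtained by testing the weak or strong formulation of \eqref{wp24}--\eqref{wp25} with a suitable function and summing over $n$ with a discrete Gronwall argument. The basic ingredients are the bounds $f^n\ge0$, $(f^n,1)=1$ and $\lVert\Sigma_\theta^\delta(f^n)\rVert_{L^\infty}\le\Delta_1$ from Theorem~\ref{thm2} and Lemma~\ref{lem1d}(ii).

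\emph{Estimate \eqref{wp30}.} I test \eqref{wp26} with $\chi=f^{n+1}$ and use the identity $2(a-b)a=a^2-b^2+(a-b)^2$. This gives
\[
\tfrac12\lVert f^{n+1}\rVert^2-\tfrac12\lVert f^n\rVert^2+\tfrac12\lVert f^{n+1}-f^n\rVert^2+\Delta t\sqrt\epsilon\lVert\nabla f^{n+1}\rVert^2\le \Delta t\,\Delta_1\,\lVert f^{n+1}\rVert\,\lVert\nabla_x f^{n+1}\rVert .
\]
Young's inequality absorbs half of the gradient term and leaves $\frac{\Delta t\Delta_1^2}{2\sqrt\epsilon}\lVert f^{n+1}\rVert^2$ on the right. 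Summing over $n$ and applying the implicit discrete Gronwall lemma, which needs $\Delta t\,\Delta_1^2/\sqrt\epsilon$ small and is compatible with the restriction in Theorem~\ref{thm2}, yields \eqref{wp30}. The initial term is controlled by $\lVert \tilde f_0\rVert\le\lVert f_0\rVert$. Only $\epsilon$ enters the constant, so the bound is uniform in $\theta,\delta,\Delta t$.

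\emph{Estimates \eqref{wp33} and \eqref{wp30c} under (A2)--(A3).} I test the strong form \eqref{wp24} with $-\Delta f^{n+1}$, which is legitimate by Remark~\ref{rem:h3} and the Neumann conditions \eqref{wp25d}. The time-difference term produces $\tfrac12(\lVert\nabla f^{n+1}\rVert^2-\lVert\nabla f^n\rVert^2+\lVert\nabla(f^{n+1}-f^n)\rVert^2)$; summing these increments gives the second bound in \eqref{wp30c}. The $c$-diffusion gives $\Delta t\sqrt\epsilon\,(\lVert\partial_c^2 f^{n+1}\rVert^2+\lVert\nabla_x\partial_c f^{n+1}\rVert^2)$.

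The spatial diffusion is the delicate term. After integration by parts,
\[
(D_\epsilon\Delta_x f,\Delta f)=(D_\epsilon\Delta_x f,\Delta_x f)+\big(D_\epsilon\,\partial_c f,\partial_c\Delta_x f\big)+(D'\,\partial_c f,\Delta_x f)\cdot(-1)\ \text{(schematically)} .
\]
The cross term $(D_\epsilon\Delta_xf,\partial_c^2 f)$ is handled as follows. I integrate by parts in $c$ and $x$; the boundary terms in $c$ vanish by \eqref{wp25d}, and those on $\partial\Omega$ vanish since $\partial_c f$ also satisfies the Neumann condition. The result is
\[
(D_\epsilon\nabla_x\partial_c f,\nabla_x\partial_cf)+(D'(c)\nabla_x f,\nabla_x\partial_c f).
\]
The first piece is nonnegative. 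For the second, I write $(D'\nabla_xf,\nabla_x\partial_c f)=\frac12(D',\partial_c|\nabla_xf|^2)$ and integrate by parts in $c$ once more. This gives $-\frac12(D'',|\nabla_xf|^2)$ plus boundary values of $D'$ at $c=0,1$.

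Assumption (A2) is exactly what controls these terms. For $a,b\ge2$ we have $D'(0)=D'(1)=0$ and $|D''|\le C$. For $a=b=1$, $D'(0)=1$ and $D'(1)=-1$ give boundary terms with a favorable sign, and $D''=-2$. In both cases the remainder is bounded by $C\lVert\nabla_x f^{n+1}\rVert^2$.

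For the drift I expand $\div(\Sigma^\delta_\theta f)=(\div\Sigma^\delta_\theta)f+\Sigma^\delta_\theta\cdot\nabla_xf$. Lemma~\ref{lem2}(i) with $\lVert f^n\rVert_{L^1}=1$ bounds its $L^2$ norm by $\frac{C}{\min(\theta,\delta)}\lVert f^{n+1}\rVert+\Delta_1\lVert\nabla f^{n+1}\rVert$. Young's inequality against $\sqrt\epsilon\lVert\Delta f^{n+1}\rVert^2$ then leaves terms bounded by \eqref{wp30}.

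I recover $\lVert\Delta f\rVert^2$ by combining $\sqrt\epsilon\lVert\Delta_xf\rVert^2$ and $\sqrt\epsilon\lVert\partial_c^2f\rVert^2$. Summation and discrete Gronwall then give \eqref{wp33}, with constants depending on $\theta,\delta$ through Lemma~\ref{lem2}.

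For the first bound in \eqref{wp30c} I test with $f^{n+1}-f^n$ instead. Using \eqref{wp24}, $\frac1{\Delta t}\lVert f^{n+1}-f^n\rVert^2$ is bounded by the $L^2$ norms of the drift and diffusion terms, and these are already summable by \eqref{wp33}. Multiplying through by $\Delta t$ gives $\sum_n\lVert f^{n+1}-f^n\rVert^2\le C\Delta t$.

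The main obstacle is the cross term between $D_\epsilon(c)\Delta_x f$ and $\partial_c^2 f$ together with its $c$-boundary contributions. This is where (A2) is indispensable, so I would verify the signs of $D'(0)$ and $D'(1)$ carefully.
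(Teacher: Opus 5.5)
Your treatment of \eqref{wp30} (testing with $f^{n+1}$) and your $-\Delta f^{n+1}$ test for \eqref{wp33}, including the drift bounds, follow the paper's proof. There are, however, two gaps.

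\textbf{Hypothesis for \eqref{wp30c}.} The lemma asserts \eqref{wp30c} under (A3) alone, i.e.\ for any $a,b>0$. You obtain both bounds only as by-products of \eqref{wp33}. The gradient increments come from the $-\Delta f^{n+1}$ test, and $\sum_n\lVert f^{n+1}-f^n\rVert^2$ comes by comparison in \eqref{wp24} using \eqref{wp33}. That test needs (A2) to control $D'$ and $D''$, so for, say, $a<1$ your argument gives nothing.

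The paper instead tests the weak form \eqref{wp26} directly with $\chi=f^{n+1}-f^n$:
\begin{itemize}
\item The diffusion terms telescope via $2(D_\epsilon u,u-v)=(D_\epsilon u,u)-(D_\epsilon v,v)+(D_\epsilon(u-v),u-v)$ with $u=\nabla_x f^{n+1}$, $v=\nabla_x f^n$, and similarly for $\sqrt{\epsilon}\,\partial_c$.
\item The drift is integrated by parts; there is no boundary term, since $\Sigma_\theta^\delta(f^n)=\mathbf{0}$ on $\partial\Omega$. It is bounded by $\Delta t\bigl(\tfrac{C}{\min(\theta,\delta)}\lVert f^{n+1}\rVert+\Delta_1\lVert\nabla_x f^{n+1}\rVert\bigr)\lVert f^{n+1}-f^n\rVert$.
\item After Young's inequality and summation, what remains is $C\Delta t\lVert\nabla f^0\rVert^2$ (bounded by (A3)), $C(\theta,\delta)T\Delta t$, and $C(\Delta t)^2\sum_n\lVert\nabla f^{n+1}\rVert^2\le C\Delta t$ by \eqref{wp30}.
\end{itemize}
This yields both bounds in \eqref{wp30c} with no condition on $a,b$. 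Downstream, Lemma~\ref{lem4} only uses \eqref{wp30c} under (A2)--(A3), so your weaker version would suffice there. It does not, however, prove the lemma as stated.

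\textbf{The case $a=b=1$.} Here the sign you claim is wrong. Write $f=f^{n+1}$. The cross term $(D\Delta_x f,\partial_c^2 f)$ sits on the left-hand side, and its endpoint contribution is
\[
\tfrac12\int_\Omega\bigl(D'(1)|\nabla_x f(\cdot,1)|^2-D'(0)|\nabla_x f(\cdot,0)|^2\bigr)\,d\mathbf{x}=-\tfrac12\int_\Omega\bigl(|\nabla_x f(\cdot,0)|^2+|\nabla_x f(\cdot,1)|^2\bigr)\,d\mathbf{x}\le 0,
\]
which is unfavorable. It cannot be controlled by $\int D|\nabla_x\partial_c f|^2$ and $C\lVert\nabla_x f\rVert^2$ alone, because the weighted integral $\int_0^1 c\,|\psi'|^2\,dc$ does not control $\psi(0)$. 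To see this, take $f=\phi(\mathbf{x})\psi(c)$ with $\psi(0)=1$ and both $\int_0^1 c|\psi'|^2$ and $\int_0^1\psi^2$ small.

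The step is repairable because the constants in \eqref{wp33} may depend on $\epsilon$:
\begin{itemize}
\item Use the one-dimensional trace inequality $\int_\Omega|\nabla_x f(\cdot,0)|^2\,d\mathbf{x}\le\lVert\nabla_x f\rVert^2+2\lVert\nabla_x f\rVert\,\lVert\nabla_x\partial_c f\rVert$, and likewise at $c=1$.
\item Combine it with $\lVert\Delta f\rVert^2=\lVert\Delta_x f\rVert^2+2\lVert\nabla_x\partial_c f\rVert^2+\lVert\partial_c^2 f\rVert^2$, valid under \eqref{wp25d}.
\item The endpoint terms are then absorbed into $\sqrt{\epsilon}\lVert\Delta f\rVert^2$ at the cost of $C\epsilon^{-1/2}\lVert\nabla_x f\rVert^2$, which is summable by \eqref{wp30}.
\end{itemize}
The paper drops these terms by asserting $D'(0)=D'(1)=0$ under (A2), which holds only for $a,b\ge 2$. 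The same repair is therefore needed in the paper's argument for $a=b=1$.
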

\begin{proof}
We start by taking $\chi=f^{n+1}$ in \eqref{wp26}, obtaining, upon using the Cauchy--Schwarz and Young inequalities and property $(ii)$ of Lemma~\ref{lem1d}, that
\begin{align*}
    &\frac{\lVert f^{n+1}\rVert^2}{2}+\frac{\lVert f^{n+1}-f^n\Vert^2}{2}+\Delta t\sqrt{\epsilon}\left(\lVert \nabla_xf^{n+1}\rVert^2+\lVert \partial_c f^{n+1}\rVert^2\right)\leq \frac{\lVert f^{n}\rVert^2}{2}\\
    & \qquad +\Delta t \lVert \Sigma_{\theta}^{\delta}(f^n)\rVert_{L^{\infty}(\Omega \times (0,1);\mathbb{R}^d)}\lVert f^{n+1}\rVert \lVert \nabla_xf^{n+1}\rVert \leq \frac{\lVert f^{n}\rVert^2}{2}\\
    & \qquad+\frac{\Delta t}{2}\sqrt{\epsilon}\left(\lVert \nabla_xf^{n+1}\rVert^2+\lVert \partial_c f^{n+1}\rVert^2\right)+\frac{C\Delta t}{\sqrt{\epsilon}}\lVert f^{n+1}\rVert^2,
\end{align*}
from which, summing over $n\in\{0,\dots,N-1\}$, we obtain that
\begin{equation}
    \label{wp29}
    \frac{\lVert f^{N}\rVert^2}{2}+\sum_{n=0}^{N-1}\frac{\lVert f^{n+1}-f^n\Vert^2}{2}+\pier{\frac{\Delta t}2}\sqrt{\epsilon}\sum_{n=0}^{N-1}\lVert \nabla f^{n+1}\rVert^2\leq \frac{\lVert f^{0}\rVert^2}{2}+\frac{C\Delta t}{\sqrt{\epsilon}}\sum_{n=0}^{N-1}\lVert f^{n+1}\rVert^2.
\end{equation}
Then, by application of a discrete Gronwall inequality we have that
\begin{equation}
    \label{wp30b}
    \displaystyle \sup_{n\in\{1,\dots,N\}}\frac{\lVert f^{n}\rVert^2}{2}\leq \frac{\lVert f^{0}\rVert^2}{2}\mathrm{e}^{\frac{CT}{\sqrt{\epsilon}}}\leq C(T,\epsilon), \quad \Delta t\sum_{n=0}^{N-1}\lVert \nabla f^{n+1}\rVert^2\leq \frac{C}{\sqrt{\epsilon}}+\frac{CT}{\epsilon}\mathrm{e}^{\frac{CT}{\sqrt{\epsilon}}}\leq C(T,\epsilon),
\end{equation}
where the constants $C(T,\epsilon)\to +\infty$ as $\epsilon \to 0^+$.
In order to prove \eqref{wp30c}, we take $\chi=f^{n+1}-f^n$ in \eqref{wp26} and, upon integration by parts and employing the property $(i)$ of Lemma~\ref{lem1d}, using moreover the Cauchy--Schwarz inequality, we obtain that
\begin{align*}
    & \lVert f^{n+1}-f^n\rVert^2+\frac{\Delta t}{2}\left(D_{\epsilon}(c)\nabla_x f^{n+1},\nabla_x f^{n+1}\right)+\frac{\Delta t}{2}\left(D_{\epsilon}(c)\nabla_x (f^{n+1}-f^n),\nabla_x (f^{n+1}-f^n)\right)\\
        & \qquad +\frac{\sqrt{\epsilon}}{2}\Delta t\left(\partial_c f^{n+1},\partial_c f^{n+1}\right) + \frac{\sqrt{\epsilon}}{2}\Delta t\left(\partial_c(f^{n+1}-f^n),\partial_c(f^{n+1}-f^n)\right)\leq \frac{\Delta t}{2}\left(D_{\epsilon}(c)\nabla_x f^{n},\nabla_x f^{n}\right)\\
        & \qquad +\frac{\sqrt{\epsilon}}{2}\Delta t\left(\partial_c f^{n},\partial_c f^{n}\right)+\Delta t \lVert \div \Sigma_{\theta}^{\delta}(f^n)\rVert_{L^{\infty}(\Omega \times (0,1))}\lVert f^{n+1}\rVert \lVert f^{n+1}-f^n\rVert\\
        & \qquad +\Delta t \lVert \Sigma_{\theta}^{\delta}(f^n)\rVert_{L^{\infty}(\Omega \times (0,1);\mathbb{R}^d)}\lVert \nabla_xf^{n+1}\rVert \lVert f^{n+1}-f^n\rVert.
\end{align*}
Using now the Cauchy--Schwarz and Young inequalities, property $(i)$ 
of Lemma~\ref{lem2}, \eqref{wp28} and \eqref{wp30}, we have that
\begin{align}
\label{wp30d}
    & \notag \lVert f^{n+1}-f^n\rVert^2+\frac{\Delta t}{2}\left(D_{\epsilon}(c)\nabla_x f^{n+1},\nabla_x f^{n+1}\right)+\frac{\sqrt{\epsilon}}{2}\Delta t\left(\partial_c f^{n+1},\partial_c f^{n+1}\right)\\
    & \notag \qquad +\frac{\Delta t}{2}\sqrt{\epsilon}\left(\nabla (f^{n+1}-f^n),\nabla (f^{n+1}-f^n)\right)\leq \frac{\Delta t}{2}\left(D_{\epsilon}(c)\nabla_x f^{n},\nabla_x f^{n}\right)+\frac{\sqrt{\epsilon}}{2}\Delta t\left(\partial_c f^{n},\partial_c f^{n}\right)\\
    & \notag \qquad +\frac{C}{\min(\delta,\theta)}\Delta t\lVert f^{n+1}\rVert \lVert f^{n+1}-f^n\rVert+C\Delta t \lVert \nabla_xf^{n+1}\rVert \lVert f^{n+1}-f^n\rVert\\
    & \notag \quad \leq \frac{\Delta t}{2}\left(D_{\epsilon}(c)\nabla_x f^{n},\nabla_x f^{n}\right)+\frac{\sqrt{\epsilon}}{2}\Delta t\left(\partial_c f^{n},\partial_c f^{n}\right)+\frac{1}{2}\lVert f^{n+1}-f^n\rVert^2+\frac{C}{\min(\delta,\theta)^2}(\Delta t)^2\\
    & \qquad +C(\Delta t)^2\lVert \nabla f^{n+1}\rVert^2,
\end{align}
from which, summing over $n\in\{0,\dots,N-1\}$ and employing \eqref{wp30} and Assumption $(A3)$, we obtain that
\begin{equation*}
    \frac{1}{2}\sum_{n=0}^{N-1}\lVert f^{n+1}-f^n\rVert^2+\frac{\Delta t}{2}\sqrt{\epsilon}\sum_{n=0}^{N-1}\lVert \nabla f^{n+1}\rVert^2+\frac{\Delta t}{4}\sqrt{\epsilon}\sum_{n=0}^{N-1}\lVert \nabla (f^{n+1}-f^n)\rVert^2\leq C\left(T,\epsilon,\delta,\theta\right)\Delta t,
\end{equation*}
where $C\left(T,\epsilon,\delta,\theta\right)\to +\infty$ as $\epsilon,\delta,\theta\to 0$, which gives \eqref{wp30c}.
Considering now that $f^{n+1}\in H^2(\bar{\Omega}\times [0,1])$, we can obtain an higher order a-priori estimate by multiplying \eqref{wp24} by 
$$\pier{-\Delta f^{n+1}=-\Delta_x f^{n+1}-\frac{\partial^2 f^{n+1}}{\partial c^2}}$$
and integrating over $\Omega\times [0,1]$. Upon integration by parts and employing the boundary conditions \eqref{wp25d}, using moreover the Cauchy--Schwarz inequality, 
we obtain that
\begin{align*}
    &\frac{\lVert \nabla f^{n+1}\rVert^2}{2}+\frac{\lVert\nabla( f^{n+1}-f^n)\Vert^2}{2}+\Delta t\sqrt{\epsilon}\lVert \Delta f^{n+1}\rVert^2+\Delta t \left(D(c)\Delta_x f^{n+1},\Delta_x f^{n+1} \right)\\
    & \qquad +\Delta t \left(D(c)\Delta_x f^{n+1}, \frac{\partial^2 f^{n+1}}{\partial c^2} \right) \leq \frac{\lVert \nabla f^{n}\rVert^2}{2} +\Delta t \lVert \Sigma_{\theta}^{\delta}(f^n)\rVert_{L^{\infty}(\Omega \times (0,1);\mathbb{R}^d)}\lVert \nabla_x f^{n+1}\rVert \lVert \Delta f^{n+1}\rVert\\
    & \qquad +\Delta t \lVert \div\Sigma_{\theta}^{\delta}(f^n)\rVert_{L^{\infty}(\Omega \times (0,1))}\lVert f^{n+1}\rVert \lVert \Delta f^{n+1}\rVert,
\end{align*}
where we explicitly used the form of \eqref{wp18}. After integration by parts and employment of the boundary conditions \eqref{wp25d} and of the fact that $D(0)=D'(0)=D(1)=D'(1)=0$ (thanks to the hypothesis $(A2)$ on $a,b$), we get that
\begin{align}
\label{wp30cc}
    & \notag \int_0^1\int_{\Omega}D(c)\Delta_xf^{n+1}\partial_c^2 f^{n+1}\abramo{d\mathbf{x}dc}=-\int_0^1\int_{\Omega}D(c)\nabla_xf^{n+1}\cdot \partial_c^2 \nabla_x f^{n+1}\abramo{d\mathbf{x}dc}\\
    & \notag \quad =\int_0^1\int_{\Omega}D'(c)\nabla_xf^{n+1}\cdot \partial_c \nabla_x f^{n+1}\abramo{d\mathbf{x}dc} +\int_0^1\int_{\Omega}D(c)\nabla_x\left(\partial_c f^{n+1}\right)\cdot \nabla_x\left(\partial_c f^{n+1}\right)\abramo{d\mathbf{x}dc}\\
    & \quad =-\frac{1}{2}\int_0^1\int_{\Omega}D''(c)\left|\nabla_x f^{n+1}\right|^2\abramo{d\mathbf{x}dc}+\int_0^1\int_{\Omega}D(c)\nabla_x\left(\partial_c f^{n+1}\right)\cdot \nabla_x\left(\partial_c f^{n+1}\right)\abramo{d\mathbf{x}dc}.
\end{align}
\begin{remark}
\label{rem2}
\abramo{\pc{We note that the integral containing the third-order derivatives in the first line of \eqref{wp30cc}, as well as the associated boundary terms (which are not reported here since they vanish by virtue of the boundary conditions), is} well defined. Indeed, \abramo{\pc{as} we have regularized the initial condition in \eqref{wp23c} such that} $f^0\in H^1(\Omega\times [0,1])$, we have (see Remark~\ref{rem:h3}) that $f^{n+1}\in H^3(\bar{\Omega}\times [0,1])$, which makes the integrands integrable in the aforementioned integrals.}
\end{remark}
Hence, thanks to the boundedness of $D'(c)$ and $D''(c)$ (due to the hypothesis $(A2)$ on $a$ and $b$) and using the property $(i)$ of Lemma~\ref{lem2} and the Young inequality, we obtain that
\begin{align*}
    & \frac{\lVert \nabla f^{n+1}\rVert^2}{2}+\frac{\lVert\nabla( f^{n+1}-f^n)\Vert^2}{2}+\Delta t\sqrt{\epsilon}\lVert \Delta f^{n+1}\rVert^2+\Delta t \left(D(c)\Delta_x f^{n+1},\Delta_x f^{n+1} \right)\\
    & \qquad +\Delta t \left(D(c)\nabla_x \left(\partial_c f^{n+1}\right), \nabla_x \left(\partial_c f^{n+1}\right) \right) \leq \frac{\lVert \nabla f^{n}\rVert^2}{2} +\Delta t \lVert \Sigma_{\theta}^{\delta}(f^n)\rVert_{L^{\infty}(\Omega \times (0,1);\mathbb{R}^d)}\lVert \nabla_x f^{n+1}\rVert \lVert \Delta f^{n+1}\rVert\\
    & \qquad +\Delta t \lVert \div\Sigma_{\theta}^{\delta}(f^n)\rVert_{L^{\infty}(\Omega \times (0,1))}\lVert f^{n+1}\rVert \lVert \Delta f^{n+1}\rVert{{}+C\Delta t \lVert \nabla f^{n+1}\rVert^2}\leq \frac{\lVert \nabla f^{n}\rVert^2}{2}+\frac{\Delta t}{2}\sqrt{\epsilon}\lVert \Delta f^{n+1}\rVert^2\\
    & \qquad +\frac{C}{\sqrt{\epsilon}}\Delta t\lVert \nabla f^{n+1}\rVert^2+\frac{C}{\sqrt{\epsilon}\min(\delta,\theta)^2}\Delta t\lVert f^{n+1}\rVert^2 +C\Delta t\lVert \nabla f^{n+1}\rVert^2,
\end{align*}
from which we conclude, using also \eqref{wp30} (without reporting the dependence of the constants on the parameter $T$), that
\begin{align}
    \label{wp31}
    & \notag \frac{\lVert \nabla f^{n+1}\rVert^2}{2}+\frac{\lVert\nabla( f^{n+1}-f^n)\Vert^2}{2}+\frac{\Delta t}{2}\sqrt{\epsilon}\lVert \Delta f^{n+1}\rVert^2+\Delta t \left(D(c)\Delta_x f^{n+1},\Delta_x f^{n+1} \right)\\
    & \quad \leq \frac{\lVert \nabla f^{n}\rVert^2}{2}+\frac{C}{\sqrt{\epsilon}}\Delta t\lVert \nabla f^{n+1}\rVert^2+\frac{C(T,\epsilon)}{\sqrt{\epsilon}\min(\delta,\theta)^2}\Delta t.
\end{align}
Summing \eqref{wp31} over $n\in\{0,\dots,N-1\}$, we obtain that
\begin{align}
    \label{wp32}
    & \notag \frac{\lVert \nabla f^{N}\rVert^2}{2}+\sum_{n=0}^{N-1}\frac{\lVert \nabla(f^{n+1}-f^n)\Vert^2}{2}+\Delta t\sqrt{\epsilon}\sum_{n=0}^{N-1}\lVert \Delta f^{n+1}\rVert^2+\Delta t \sum_{n=0}^{N-1} \left(D(c)\Delta_x f^{n+1},\Delta_x f^{n+1} \right) \\ 
    & \quad \leq \frac{\lVert \nabla f^{0}\rVert^2}{2}+\frac{C\Delta t}{\sqrt{\epsilon}}\sum_{n=0}^{N-1}\lVert \nabla f^{n+1}\rVert^2+\frac{C(T,\epsilon)T}{\sqrt{\epsilon}\min(\delta,\theta)^2}.
\end{align}
Then, by application of a discrete Gronwall inequality and assuming that $f^0\in H^1(\Omega \times [0,1])$, we have that
\begin{align}
    \label{wp33b}
    & \notag \displaystyle \sup_{n\in\{1,\dots,N\}}\frac{\lVert \nabla f^{n}\rVert^2}{2}\leq \left(\frac{\lVert \nabla f^{0}\rVert^2}{2}+\frac{C(T,\epsilon)T}{\sqrt{\epsilon}\min(\delta,\theta)^2}\right)\mathrm{e}^{\frac{CT}{\epsilon\delta^2}}\leq C(T,\epsilon,\delta,\theta),\\
    & \Delta t\sum_{n=0}^{N-1}\lVert \Delta f^{n+1}\rVert^2\leq  C(T,\epsilon,\delta,\theta), \quad \Delta t \sum_{n=0}^{N-1} \left(D(c)\Delta_x f^{n+1},\Delta_x f^{n+1} \right)\leq  C(T,\epsilon,\delta,\theta),
\end{align}
where the constants $C(T,\epsilon,\delta,\theta)\to +\infty$ as $\epsilon,\delta,\theta \to 0^+$.
\end{proof}
We now associate to the sequence of time discrete solutions $f^n$ of \eqref{wp26}, for $n\in \{1, \dots, N\}$, the following piecewise constant and piecewise linear time interpolants over the interval $[0,T]$:
\begin{equation}
    \label{wp34}
    \bar{f}_{\Delta t}(t):=f^{n+1}, \quad \ubar{f}_{\Delta t}(t):=f^{n}, \quad \hat{f}_{\Delta t}(t):=\frac{t-t^n}{\Delta t}f^{n+1}+\frac{t^{n+1}-t}{\Delta t}f^{n},
\end{equation}
for any $t\in [t_{n},t_{n+1}]$, $n\in \{0,\dots, N-1\}$. The weak formulation \eqref{wp26} of \eqref{wp24}-\eqref{wp25} can then be reformulated as
\begin{equation}
    \label{wp35}
   \left(\partial_t \hat{f}_{\Delta t},\chi\right)+(\Sigma_{\theta}^{\delta}(\ubar{f}_{\Delta t})\bar{f}_{\Delta t},\nabla_x \chi)+ (D_{\epsilon}(c)\nabla_x\bar{f}_{\Delta t},\nabla_x \chi)+\sqrt{\epsilon}\left(\partial_c \bar{f}_{\Delta t},\partial_c \chi\right)=0,
\end{equation}
valid for any $\chi \in H^1(\Omega\times [0,1])$. The next lemma provides the convergence results, based on the estimates obtained in Lemma~\ref{lem3}, needed to study the limit of $\mathcal{P}_n^{\theta,\delta,\epsilon}$ as $\Delta t \to 0$.
\begin{lemma}
    \label{lem4}
    Under Assumption $(A1)$ of Theorem~\ref{thm1} there exists a limit function $f$, with regularity
    \[
    f\in L^{\infty}(0,T;L^2(\Omega \times (0,1)))\cap L^{2}(0,T;H^1(\Omega \times [0,1]))\cap H^1\pier{\bigl(}0,T;\bigl(H^1(\Omega \times [0,1])\bigr)'\pier{\bigr)},
    \]
    such that, up to subsequences of the solution of $\mathcal{P}_n^{\theta,\delta,\epsilon}$, which we still label by the index $\Delta t$, the following convergence results hold as $\Delta t \to 0$:
    \begin{align}
    \label{conv1} & \hat{f}_{\Delta t} \overset{\ast}{\rightharpoonup} f \quad \text{in} \quad L^{\infty}\left(0,T;L^2(\Omega \times (0,1))\right)\cap L^{2}\left(0,T;H^1(\Omega \times [0,1])\right),\\
    \label{conv2} & \partial_t\hat{f}_{\Delta t} {\rightharpoonup} \partial_tf \quad \text{in} \quad L^{2}\left(0,T;\left(H^1(\Omega \times [0,1])\right)'\right),\\
    \label{conv3} & \hat{f}_{\Delta t} {\rightarrow} f \quad \text{in} \quad L^{2}\left(0,T;(L^r(\Omega \times (0,1))\right)\;\; \text{and} \;\; \text{a.e. in} \; \; \Omega\times (0,1)\times (0,T),\\
    \label{conv4} & \bar{f}_{\Delta t},\ubar{f}_{\Delta t} \overset{\ast}{\rightharpoonup} f \quad \text{in} \quad L^{\infty}\left(0,T;L^2(\Omega \times (0,1))\right)\cap L^{2}\left(0,T;H^1(\Omega \times [0,1])\right),\\
    \label{conv5} & \bar{f}_{\Delta t},\ubar{f}_{\Delta t} {\rightarrow} f \quad \text{in} \quad L^{2}\left(0,T;(L^r(\Omega \times (0,1))\right)\;\; \text{and} \;\; \text{a.e. in} \; \; \Omega\times (0,1)\times (0,T),\\
    \label{conv6} & \Sigma_{\theta}^{\delta}\left(\ubar{f}_{\Delta t}\right) {\rightarrow} \Sigma_{\theta}^{\delta}\left(f\right) \quad \text{in} \quad L^{2}\left(0,T;(L^{\infty}(\Omega \times (0,1);\mathbb{R}^d)\right),
    \end{align}
    where $r\geq 1$ for $d=2$ and $r\in [1,6)$ for $d=3$. Moreover, under the Assumptions \pier{$(A2)$--$(A3)$} of Theorem~\ref{thm1} there exists a limit function $f$, with regularity
    \[
    f\in L^{\infty}(0,T;H^1(\Omega \times [0,1]))\cap L^{2}(0,T;H^2(\bar{\Omega} \times [0,1]))\cap H^1\bigl(0,T;L^2(\Omega \times (0,1))\bigr),
    \]
    such that, up to subsequences of the solution of $\mathcal{P}_n^{\theta,\delta,\epsilon}$, which we still label by the index $\Delta t$, the following convergence results hold as $\Delta t \to 0$:
    \begin{align}
    \label{conv7} & \hat{f}_{\Delta t} \overset{\ast}{\rightharpoonup} f \quad \text{in} \quad \pier{L^{\infty}\bigl(0,T;H^1(\Omega \times [0,1])\bigr)},\\
    \label{conv8} & \partial_t\hat{f}_{\Delta t} {\rightharpoonup} \partial_tf \quad \text{in} \quad L^{2}\bigl(0,T;L^2(\Omega \times (0,1))\bigr),\\
    \label{conv9} & \hat{f}_{\Delta t} {\rightarrow} f \quad \text{in} \quad C^{0}(0,T;(L^r(\Omega \times (0,1)))\;\; \text{and} \;\; \text{a.e. in} \; \; \Omega\times (0,1)\times (0,T),\\
    \label{conv10} & \bar{f}_{\Delta t},\ubar{f}_{\Delta t} \overset{\ast}{\rightharpoonup} f \quad \text{in} \quad \pier{L^{\infty}\bigl(0,T;H^1(\Omega \times [0,1])\bigr)},\\
    \label{conv10b} & \bar{f}_{\Delta t}{\rightharpoonup} f \quad \text{in} \quad  L^{2}\pier{\bigl(}0,T;H^2(\bar{\Omega} \times [0,1])\pier{\bigr)},\\
    \label{conv11} & \bar{f}_{\Delta t},\ubar{f}_{\Delta t} {\rightarrow} f \quad \text{in} \quad L^{\infty}
    \bigl(0,T;L^r(\Omega \times (0,1))\bigr)\;\; \text{and} \;\; \text{a.e. in} \; \; \Omega\times (0,1)\times (0,T),\\
    \label{conv12} & \Sigma_{\theta}^{\delta}\pier{\bigl(}\ubar{f}_{\Delta t}\pier{\bigr)} {\rightarrow} \Sigma_{\theta}^{\delta}\pier{\bigl(}f\pier{\bigr)} \quad \text{in} \quad L^{\infty}\pier{\bigl(}\Omega \times (0,1)\times (0,T);\mathbb{R}^d\pier{\bigr)},\\
    \label{conv13} & \div \Sigma_{\theta}^{\delta}\left(\ubar{f}_{\Delta t}\pier{\bigr)} {\rightarrow} \div \Sigma_{\theta}^{\delta}\left(f\right) \quad \text{in} \quad L^{\infty}\pier{\bigl(}0,T;L^r(\Omega \times (0,1))\right),
    \end{align}
    where $r\geq 1$ for $d=2$ and $r\in [1,6)$ for $d=3$.
\end{lemma}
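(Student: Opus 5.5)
The plan is a standard compactness argument built on the discrete bounds of Lemma~\ref{lem3}, applied first under $(A1)$ and then under $(A2)$--$(A3)$.

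\textbf{Step 1: uniform bounds for the interpolants.} By \eqref{wp30}, $\bar f_{\Delta t}$ and $\ubar f_{\Delta t}$ are bounded in $L^\infty(0,T;L^2)\cap L^2(0,T;H^1)$. For $\ubar f_{\Delta t}$ we also need the first step; here we use \eqref{wp23b}, noting that $\Delta t\|\nabla f^0\|^2\le C$. Since $\hat f_{\Delta t}$ is a convex combination of $f^n$ and $f^{n+1}$, it inherits the same bounds.

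\textbf{Step 2: bound on the time derivative.} From \eqref{wp35}, for any $\chi$ we have
\[
|(\partial_t\hat f_{\Delta t},\chi)|\le C\bigl(\|\bar f_{\Delta t}\|+\|\nabla\bar f_{\Delta t}\|\bigr)\|\chi\|_{H^1}.
\]
This uses $\|\Sigma_\theta^\delta(\ubar f_{\Delta t})\|_{L^\infty}\le\Delta_1$, which follows from Lemma~\ref{lem1d}(ii) together with mass conservation and nonnegativity. Hence $\partial_t\hat f_{\Delta t}$ is bounded in $L^2(0,T;(H^1)')$.

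\textbf{Step 3: weak limits, strong compactness and identification.} Banach--Alaoglu gives the weak limits \eqref{conv1}, \eqref{conv2} and \eqref{conv4} along a subsequence. The Aubin--Lions lemma, applied with $H^1\subset\subset L^r\subset (H^1)'$ for $r<6$ when $d=3$ (the domain is $(d+1)$-dimensional, but one may argue per the stated exponents), gives \eqref{conv3} and, after a further subsequence, a.e.\ convergence. To show that $\bar f_{\Delta t}$, $\ubar f_{\Delta t}$ and $\hat f_{\Delta t}$ share the same limit, the key identity is
\[
\|\bar f_{\Delta t}-\hat f_{\Delta t}\|_{L^2(0,T;L^2)}^2\le C\,\Delta t\sum_n\|f^{n+1}-f^n\|^2 .
\]
The right-hand side tends to $0$ by \eqref{wp29}, which bounds $\sum_n\|f^{n+1}-f^n\|^2$ uniformly. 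The same argument applies to $\ubar f_{\Delta t}$. Interpolating with the $H^1$ bound then upgrades this to $L^2(0,T;L^r)$ convergence, which gives \eqref{conv5}.

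\textbf{Step 4: convergence of the nonlocal drift.} For \eqref{conv6}, linearity and Lemma~\ref{lem2} give
\[
\|\Sigma_\theta^\delta(\ubar f_{\Delta t})-\Sigma_\theta^\delta(f)\|_{L^\infty}\le\Delta_1\|\ubar f_{\Delta t}-f\|_{L^1}.
\]
Squaring and integrating in time, the right-hand side tends to $0$ by \eqref{conv5}.

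\textbf{Step 5: the strong-solution case.} Under $(A2)$--$(A3)$, the estimates \eqref{wp33} give $L^\infty(0,T;H^1)$ bounds and $L^2(0,T;H^2)$ bounds. For the latter, $\|\Delta f\|$ is converted into an $H^2$ norm by elliptic regularity under the Neumann conditions \eqref{wp25d}. Estimate \eqref{wp30c} gives
\[
\|\partial_t\hat f_{\Delta t}\|_{L^2(0,T;L^2)}^2=\Delta t^{-1}\sum_n\|f^{n+1}-f^n\|^2\le C,
\]
which yields \eqref{conv7}, \eqref{conv8}, \eqref{conv10} and \eqref{conv10b}. Applying Aubin--Lions in its Simon $C^0$ version to $L^\infty(H^1)\cap H^1(L^2)$ gives \eqref{conv9}. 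For \eqref{conv11} we use
\[
\sup_t\|\bar f_{\Delta t}-\hat f_{\Delta t}\|\le\max_n\|f^{n+1}-f^n\|\le (C\Delta t)^{1/2}\to0,
\]
combined with interpolation against the $H^1$ bound. Then \eqref{conv12} and \eqref{conv13} follow from Lemma~\ref{lem2}(i): both $\Sigma_\theta^\delta$ and $\div\Sigma_\theta^\delta$ are bounded linear maps from $L^1$ to $L^\infty$, with constants of order $1/\min(\theta,\delta)$. Combined with the uniform-in-time $L^1$ convergence, this suffices.

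\textbf{Main obstacle.} I expect the main difficulty to lie in Step 1 and Step 3 for the shifted interpolant $\ubar f_{\Delta t}$ at $n=0$ under $(A1)$, together with the precise compact embeddings for the exponents $r$. The issue is that $\|\nabla f^0\|$ is only $O(\Delta t^{-1/2})$. My first attempt would be to handle this with the weighted bound from \eqref{wp23b}, since that first interval contributes only $\Delta t\|\nabla f^0\|^2\le C$.
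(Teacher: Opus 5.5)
Your proposal is correct and follows the paper's proof essentially step by step. It uses the same uniform bounds from \eqref{wp30} and \eqref{wp23b}, including the $\Delta t\|\nabla f^0\|^2\le C$ treatment of the first interval for $\ubar{f}_{\Delta t}$, and the same dual estimate for $\partial_t\hat{f}_{\Delta t}$ from \eqref{wp35} followed by Aubin--Lions. The three interpolants' limits are identified through $\Delta t\sum_n\|f^{n+1}-f^n\|^2\to0$ (resp.\ $\max_n\|f^{n+1}-f^n\|\le C\sqrt{\Delta t}$), and the linear $L^1\to L^\infty$ bounds give \eqref{conv6}, \eqref{conv12}, \eqref{conv13}. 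Your interpolation against the $H^1$ bound is a cleaner way to upgrade to $L^r$ than the paper's appeal to a.e.\ convergence plus uniform bounds.

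Your dimensional caveat is justified and applies equally to the paper's argument. Since $\Omega\times(0,1)$ is $(d+1)$-dimensional, \eqref{conv3}, \eqref{conv5}, \eqref{conv9}, \eqref{conv11} follow only for $r<6$ when $d=2$ and $r<4$ when $d=3$. This is harmless, since only $r\le 2$ is used later.
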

\begin{remark}
The previous convergence results readily hold for $d=1$, yielding better compactness properties. 
This consideration remains valid for the limit studies in the upcoming section.
\end{remark}
\begin{proof}
We start by proving the convergence results \eqref{conv1}-\eqref{conv6}, which are valid under the Assumption $(A1)$.
As a consequence of Lemma~\eqref{lem3}, the piecewise time interpolants $\bar{f}_{\Delta t},\ubar{f}_{\Delta t}$ satisfy the following a-priori estimates, which are uniform in the parameters $\Delta t$, $\theta$ and $\delta$:
\begin{equation}
\label{ap1}
\lVert \bar{f}_{\Delta t}\rVert_{L^{\infty}\left(0,T;L^2(\Omega \times (0,1))\right)\cap L^{2}\left(0,T;H^1(\Omega \times [0,1])\right)}+\lVert \ubar{f}_{\Delta t}\rVert_{L^{\infty}\left(0,T;L^2(\Omega \times (0,1))\right)\cap L^{2}\left(0,T;H^1(\Omega \times [0,1])\right)}\leq C.
\end{equation}
The uniform bound in $L^{\infty}\left(0,T;L^2(\Omega \times (0,1))\right)$ is a direct consequence of the first estimate in \eqref{wp30}. Thanks to the second estimate of \eqref{wp30} we have that
\[
\int_0^T\lVert \nabla \bar{f}_{\Delta t}\rVert^2dt=\Delta t \sum_{n=0}^{N-1}\lVert \nabla f^{n+1}\rVert^{\pier{2}}\leq C.
\]
Moreover, thanks to the second estimate of \eqref{wp30} and to \eqref{wp23b} we have that
\[
\int_0^T\lVert \nabla \ubar{f}_{\Delta t}\rVert^2dt=\Delta t \sum_{n=0}^{N-1}\lVert \nabla f^{n}\rVert^{\pier{2}}\leq C.
\]
Hence, the Banach--Alaoglu theorem implies \pier{the convergence in~\eqref{conv4}, and we will explain in a while why the limit functions for $\bar{f}_{\Delta t}$ and $\ubar{f}_{\Delta t}$ are the same.} For what concerns the convergence properties of $\hat{f}_{\Delta t}$, we rewrite the third relation in \eqref{wp34} as
\[
\hat{f}_{\Delta t} \pier{(t)}=f^n+\frac{t-t^n}{\Delta t}(f^{n+1}-f^n).
\]
The following a-priori estimate, which is uniform in the parameters $\Delta t$ and $\theta$, is valid:
\begin{equation}
\label{ap2}
\lVert \hat{f}_{\Delta t}\rVert_{L^{\infty}\left(0,T;L^2(\Omega \times (0,1))\right)\cap L^{2}\left(0,T;H^1(\Omega \times [0,1])\right)}\leq C.
\end{equation}
\pier{In fact,} the uniform bound in $L^{\infty}\left(0,T;L^2(\Omega \times (0,1))\right)$ is a direct consequence of the first estimate in \eqref{wp30}. Thanks to the second estimate of \eqref{wp30} and to \eqref{wp23b} we have that
\begin{align*}
& \int_0^T\lVert \nabla \hat{f}_{\Delta t}\rVert^2dt\leq 2\Delta t \sum_{n=0}^{N-1}\lVert \nabla f^{n}\rVert^2+2\sum_{n=0}^{N-1}\int_{t^n}^{t^{n+1}}\frac{(t-t^n)^2}{\Delta t^2}\lVert \nabla (f^{n+1}-f^{n})\rVert^2\abramo{dt}\\
& \quad \leq 2\Delta t \sum_{n=0}^{N-1}\lVert \nabla f^{n}\rVert^2+\frac{2}{3}\Delta t \sum_{n=0}^{N-1}\lVert \nabla (f^{n+1}-f^n)\rVert^2\leq \frac{10}{3}\Delta t \sum_{n=0}^{N-1}\lVert \nabla f^n\rVert^2+\frac{4}{3}\Delta t \sum_{n=0}^{N-1}\lVert \nabla f^{n+1}\rVert^2\leq C.
\end{align*}
Multiplying \eqref{wp35} by a time function $\zeta \in L^2(0,T)$ and integrating in time over the interval $[0,T]$, we \pier{infer} that
\begin{align*}
   & \int_0^T \left(\partial_t \hat{f}_{\Delta t},\chi\right)\zeta\abramo{\,dt} \leq \sup_{c\in [0,1]}D_{\epsilon}(c)\int_0^T\lVert \nabla_x \bar{f}_{\Delta t} \rVert \, \lVert \nabla_x \chi \rVert\,|\zeta|\abramo{\,dt}+\sqrt{\epsilon}\int_0^T \lVert \partial_c \bar{f}_{\Delta t} \rVert \, \lVert \partial_c \chi \rVert\,|\zeta|\abramo{\,dt}\\
   & \qquad +C\lVert \ubar{f}_{\Delta t}\rVert_{L^{\infty}(0,T;L^1(\Omega \times (0,1)))} \int_0^T \lVert \bar{f}_{\Delta t} \rVert \, \lVert \nabla_x \chi \rVert\,|\zeta|\abramo{\,dt}\leq C \lVert \chi \rVert_{H^1(\Omega \times [0,1])}\lVert \zeta\rVert_{L^2(0,T)},
\end{align*}
where we used \eqref{ap1} and property $(ii)$ of Lemma~\ref{lem1d}, from which we \pier{obtain}~\eqref{conv2}. 
Hence, \eqref{conv1}-\eqref{conv2} together with the Aubin--Lions theorem give the compactness result \eqref{conv3}.
Note that the regularization of the initial condition introduced in \eqref{wp23c} is necessary to obtain the strong convergence of $\hat{f}_{\Delta t}$. We observe that the interpolants $\bar{f}_{\Delta t}$ and $\ubar{f}_{\Delta t}$ converge to the same limit as $\hat{f}_{\Delta t}$. This can be proved by showing that
\[
\lVert \hat{f}_{\Delta t}-\bar{f}_{\Delta t}\rVert_{L^2(0,T;L^2(\Omega\times (0,1)))}, \lVert \hat{f}_{\Delta t}-\ubar{f}_{\Delta t}\rVert_{L^2(0,T;L^2(\Omega\times (0,1)))}\to 0 \quad \text{as} \quad \Delta t \to 0. 
\]
Indeed, we can write
\[
\pier{\bigl(\hat{f}_{\Delta t}-\bar{f}_{\Delta t}\bigr) (t)}=\frac{t-t^{n+1}}{\Delta t}(f^{n+1}-f^n), \quad \pier{\bigl(\hat{f}_{\Delta t}-\ubar{f}_{\Delta t}\bigr)(t)}=\frac{t-t^{n}}{\Delta t}(f^{n+1}-f^n).
\]
Using the latter formula and the estimate \eqref{wp29} we conclude that
\[
\lVert \hat{f}_{\Delta t}-\bar{f}_{\Delta t}\rVert_{L^2(0,T;L^2(\Omega\times (0,1)))}^2, \lVert \hat{f}_{\Delta t}-\ubar{f}_{\Delta t}\rVert_{L^2(0,T;L^2(\Omega\times (0,1)))}^2\leq \frac{\Delta t}{3}\sum_{n=0}^{N-1}\lVert f^{n+1}-f^n\rVert^2\to 0\quad \text{as} \quad \Delta t \to 0.
\]
Hence, thanks to the strong convergence result \eqref{conv3} we also have that
\[
\bar{f}_{\Delta t},\ubar{f}_{\Delta t} {\rightarrow} f \quad \text{in} \quad \pier{L^{2}\bigl(0,T;L^2(\Omega \times (0,1))\bigr)}\;\; \text{and} \;\; \text{a.e. in} \; \; \Omega\times (0,1)\times (0,T).
\]
Due to the pointwise a.e. convergence of $\bar{f}_{\Delta t},\ubar{f}_{\Delta t}$ and the fact that they are uniformly (with respect to the parameters $\Delta t$, $\theta$ and $\delta$) bounded in $L^{2}\left(0,T;L^r(\Omega \times (0,1))\right)$, which is a consequence of \eqref{conv4}, we deduce \eqref{conv5}. \pier{Next, we show}~\eqref{conv6}, observing that
\[
\int_0^T\lVert \Sigma_{\theta}^{\delta}\left(\ubar{f}_{\Delta t}\right)-\Sigma_{\theta}^{\delta}\left(f\right)\rVert_{L^{\infty}(\Omega\times (0,1);\mathbb{R}^d)}^2\abramo{dt}\leq \Delta_1^2\int_0^T\lVert \ubar{f}_{\Delta t}-f\rVert_{L^{1}(\Omega\times (0,1))}^2\abramo{\,dt}\to 0,
\]
which is a consequence of property $(ii)$ of Lemma~\ref{lem1d}, of the linearity of the operator $\Sigma_{\theta}^{\delta}$ and of \eqref{conv5}.

We now come to the proof of \eqref{conv7}-\eqref{conv13}. The convergence results \eqref{conv7}, \eqref{conv10} and \eqref{conv10b} are a direct consequence of \eqref{wp33}, of Assumption $(A3)$ and of the Banach--Alaoglu theorem. Also, \eqref{conv8} \pier{follows from} the first estimate in~\eqref{wp30c}, since it implies that
\[
\int_0^T\lVert \partial_t \hat{f}_{\Delta t}\rVert^2\abramo{\,dt}=\Delta t \sum_{n=0}^{N-1}\frac{\lVert f^{n+1}-f^n\rVert^2}{(\Delta t)^2}\leq C.
\]
Hence, \eqref{conv7}, \eqref{conv8} and the Aubin--Lions theorem imply the compactness result \eqref{conv9}. With similar considerations as before, we calculate that
\[
\lVert \hat{f}_{\Delta t}-\bar{f}_{\Delta t}\rVert_{L^{\infty}(0,T;L^2(\Omega\times (0,1)))}^2, \lVert \hat{f}_{\Delta t}-\ubar{f}_{\Delta t}\rVert_{L^{\infty}(0,T;L^2(\Omega\times (0,1)))}^2\leq \sup_{n\in\{0,\dots,N-1\}}\lVert f^{n+1}-f^n\rVert^2\leq C\Delta t\to 0
\]
as $\Delta t\to 0$, where we used the first estimate of \eqref{wp30c}. Hence, thanks to the strong convergence result \eqref{conv9} we also have that
\[
\bar{f}_{\Delta t},\ubar{f}_{\Delta t} {\rightarrow} f \quad \text{in} \quad \pier{L^{\infty}\bigl(0,T;L^2(\Omega \times (0,1))\bigr)}\;\; \text{and} \;\; \text{a.e. in} \; \; \Omega\times (0,1)\times (0,T).
\]
Due to the pointwise a.e. convergence of $\bar{f}_{\Delta t},\ubar{f}_{\Delta t}$ and the fact that they are uniformly (with respect to the parameters $\Delta t$ and $\theta$) bounded in $\pier{L^{\infty}\bigl(0,T;L^r(\Omega \times (0,1))\bigr)}$, which is a consequence of \eqref{conv10}, we deduce \eqref{conv11}. We conclude by proving \eqref{conv12} and \eqref{conv13}. We observe that
\[
\lVert \Sigma_{\theta}^{\delta}\left(\ubar{f}_{\Delta t}\right)-\Sigma_{\theta}^{\delta}\left(f\right)\rVert_{L^{\infty}(\Omega\times (0,1)\times (0,T);\mathbb{R}^d)}\leq \Delta_1\lVert \ubar{f}_{\Delta t}-f\rVert_{L^{\infty}(0,T;L^{1}(\Omega\times (0,1)))}\to 0,
\]
which is a consequence of property $(ii)$ of Lemma~\ref{lem1d}, of the linearity of the operator $\Sigma_{\theta}^{\delta}$ and of \eqref{conv11}. Finally, in a similar way property $(i)$ of Lemma~\ref{lem2}, the linearity of $\div \Sigma_{\theta}^{\delta}$ and \eqref{conv11} imply \eqref{conv13}. 
\end{proof}
We are now ready to prove that the limit function $f$ obtained in Lemma~\ref{lem4} is the unique weak solution, under Assumption $(A1)$, or strong solution, under Assumptions \pier{$(A2)$--$(A3)$}, of \eqref{wp19}-\eqref{wp21}, as defined in 
Theorem~\ref{thm1}. This constitutes the proof of Theorem~\ref{thm1}.
\begin{proof}[Proof of Theorem~\ref{thm1}] We consider first the situation under Assumption $(A1)$. Let's multiply \eqref{wp35} by a generic function $\zeta\in L^2(0,T)$ and integrate it in time over the interval $[0,T]$. Defining $g:=\chi \zeta \in L^2(0,T;H^1(\Omega \times [0,1]))$, we obtain the weak formulation
 \begin{align}
    \label{wp36}
   & \int_0^T\left(\partial_t \hat{f}_{\Delta t},g\right)\abramo{\,dt}+\int_0^T(\Sigma_{\theta}^{\delta}(\ubar{f}_{\Delta t})\bar{f}_{\Delta t},\nabla_x g)\abramo{\,dt}\notag\\
   &\qquad{}+ \int_0^T(D_{\epsilon}(c)\nabla_x\bar{f}_{\Delta t},\nabla_x g)\abramo{\,dt}+\sqrt{\epsilon}\int_0^T\left(\partial_c \bar{f}_{\Delta t},\partial_c g\right)\abramo{\,dt}=0,
\end{align}
valid for any $g \in L^2(0,T;H^1(\Omega \times [0,1]))$. 
Thanks to the convergence results \eqref{conv2} and \eqref{conv4} we straightforwardly \pier{infer} that
\begin{align*}
    & \int_0^T\bigl(\partial_t \hat{f}_{\Delta t},g
    \bigr)\abramo{\,dt}\rightarrow \int_0^T\left<\partial_t f,g\right>\abramo{\,dt},\quad  \int_0^T(D_{\epsilon}(c)\nabla_x\bar{f}_{\Delta t},\nabla_x g)\abramo{\,dt} \rightarrow \int_0^T(D_{\epsilon}(c)\nabla_x f,\nabla_x g)\abramo{\,dt},\\
    &  \sqrt{\epsilon}\int_0^T\left(\partial_c \bar{f}_{\Delta t},\partial_c g\right)\abramo{\,dt} \rightarrow \sqrt{\epsilon}\int_0^T\left(\partial_c f,\partial_c g\right)\abramo{\,dt},
\end{align*}
as $\Delta t\to 0$. For what concerns the second term in \eqref{wp36}, we rewrite it as
\begin{align}
\label{wp36dt}
    & \notag \int_0^T(\Sigma_{\theta}^{\delta}(\ubar{f}_{\Delta t})\bar{f}_{\Delta t},\nabla_x g)\abramo{\,dt}= \underbrace{\int_0^T\left((\Sigma_{\theta}^{\delta}(\ubar{f}_{\Delta t})-\Sigma_{\theta}^{\delta}(f))\bar{f}_{\Delta t},\nabla_x g\right)\abramo{\,dt}}_{=:I_1}\\
    & \qquad {}+\underbrace{\int_0^T(\Sigma_{\theta}^{\delta}(f)(\bar{f}_{\Delta t}-f),\nabla_x g)\abramo{\,dt}}_{=:I_2}{}
    +\int_0^T(\Sigma_{\theta}^{\delta}(f)f,\nabla_x g)\abramo{\,dt}.
\end{align}
We observe that
\begin{align*}
&|I_1|\leq \lVert \Sigma_{\theta}^{\delta}(\ubar{f}_{\Delta t})-\Sigma_{\theta}^{\delta}(f)\rVert_{L^{2}(0,T;L^{\infty}(\Omega\times (0,1);\mathbb{R}^d))}\lVert \bar{f}_{\Delta t} \rVert_{L^{\infty}(0,T;L^2(\Omega\times (0,1)))}\lVert \nabla_x g\rVert_{L^2(0,T;L^2(\Omega\times (0,1)))}\\
& \quad \leq C\lVert \ubar{f}_{\Delta t}-f\rVert_{L^{2}(0,T;L^{1}(\Omega\times (0,1)))}\lVert \bar{f}_{\Delta t} \rVert_{L^{\infty}(0,T;L^2(\Omega\times (0,1)))}\lVert \nabla_x g\rVert_{L^2(0,T;L^2(\Omega\times (0,1)))}\rightarrow 0
\end{align*}
as $\Delta t \to 0$, thanks to \eqref{ap1} and \eqref{conv5}. Also, we have that
\begin{align*}
&|I_2|\leq \lVert \Sigma_{\theta}^{\delta}(f)\rVert_{L^{\infty}(0,T;L^{\infty}(\Omega\times (0,1);\mathbb{R}^d))}\lVert \bar{f}_{\Delta t}-f \rVert_{L^{2}(0,T;L^2(\Omega\times (0,1)))}\lVert \nabla_x g\rVert_{L^2(0,T;L^2(\Omega\times (0,1)))}\\
& \quad \leq C\lVert f\rVert_{L^{\infty}(0,T;L^{1}(\Omega\times (0,1)))}\lVert \bar{f}_{\Delta t}-f \rVert_{L^{2}(0,T;L^2(\Omega\times (0,1)))}\lVert \nabla_x g\rVert_{L^2(0,T;L^2(\Omega\times (0,1)))}\rightarrow 0
\end{align*}
as $\Delta t \to 0$, thanks to \eqref{wp21b} and \eqref{conv5}. We thus conclude that
\[
\int_0^T(\Sigma_{\theta}^{\delta}(\ubar{f}_{\Delta t})\bar{f}_{\Delta t},\nabla_x g)\abramo{\,dt}\rightarrow \int_0^T(\Sigma_{\theta}^{\delta}(f)f,\nabla_x g)\abramo{\,dt}
\]
as $\Delta t \to 0$. Hence, the limit function has the regularity \eqref{wp21b}, thanks to \eqref{conv1} and \eqref{conv2}, and satisfies the weak formulation \eqref{wp22}. We need finally to prove that the initial condition holds. Due to \eqref{wp21b}, in particular to the fact that $L^{2}(0,T;H^1(\Omega \times [0,1]))\cap H^1\pier{\bigl(}0,T;\left(H^1(\Omega \times [0,1])\right)'\pier{\bigr)}$ is continuously embedded in $C^{0}(0,T;L^2(\Omega \times (0,1)))$, and thanks to the fact that $\tilde{f}_0\to f_0$ strongly in $L^2(\Omega \times (0,1))$, \pier{it is} easy to prove in a standard way that $f(\cdot,\cdot,0)=f_0$ a.e. in $\Omega\times [0,1]$. Thanks to Fatou's lemma applied to \eqref{wp28}, we have that
\[
\int_0^1\int_{\Omega}|f|\abramo{\,d\mathbf{x}dc}\leq \int_0^1\int_{\Omega}f_0\abramo{\,d\mathbf{x}dc}=1.
\]
Moreover, taking $g\equiv 1$ in \eqref{wp22}, we have that $\int_0^1\int_{\Omega}f=1$. Hence, with similar arguments as before, we conclude that $f\geq 0$ a.e. in $\Omega\times [0,1]$. Coming to the uniqueness of $f$ as a solution of \eqref{wp22} and to its continuous dependence from data, let us assume that $f_1^0,f_2^0\in L^{2}(\Omega\times (0,1))$ be two different initial data, and let $f_1,f_2$ be the two corresponding weak solutions. We define $\tilde{f}:=f_1-f_2$. Taking the difference between \eqref{wp22} for $f_1$ and $f_2$, we have that $\tilde{f}$ satisfies the weak formulation
\begin{align}
    \label{wp37}
    & \notag \int_0^T\left<\partial_t \tilde{f},g\right>\abramo{\,dt}+\int_0^T(\Sigma_{\theta}^{\delta}(f_1)\tilde{f},\nabla_x g)\abramo{\,dt}+\int_0^T((\Sigma_{\theta}^{\delta}(f_1)-\Sigma_{\theta}^{\delta}(f_2))f_2,\nabla_x g)\abramo{\,dt}\\
    & \qquad +\int_0^T(D_{\epsilon}(c)\nabla_x \tilde{f},\nabla_x g)\abramo{\,dt}+\sqrt{\epsilon}\int_0^T\left(\partial_c \tilde{f},\partial_c g\right)\abramo{\,dt}=0
\end{align}
for all $g\in L^{2}(0,T;H^1(\Omega \times [0,1]))$, and moreover satisfies the initial condition $\tilde{f}(\cdot,\cdot,0)=f_1^0-f_2^0$ a.e. in $\Omega \times [0,1]$.
If we take $g(\vec{x},c,t)=\tilde{f}(\vec{x},c,t)H(s-t)$, for a given $s\in (0,T]$, in \eqref{wp37}, we obtain, using \eqref{wp21b}, property $(i)$ of Lemma~\ref{lem1}, the Cauchy--Schwarz and Young inequalities that
\begin{align*}
   & \frac{\betti{\lVert \tilde{f}\rVert^2}(s)}{2}+\sqrt{\epsilon}\int_0^s\lVert \nabla \tilde{f}\rVert^2\abramo{\,dt}\leq \frac{\betti{\lVert \tilde{f}\rVert^2(0)}}{2}+\lVert \Sigma_{\theta}^{\delta}(f_1)\rVert_{L^{\infty}(0,s;L^{\infty}(\Omega \times (0,1)))}\int_0^s\lVert \tilde{f}\rVert \, \lVert \nabla_x \tilde{f}\rVert\abramo{\,dt}\\
   & \qquad{}+\int_0^s\lVert\Sigma_{\theta}^{\delta}(f_1)-\Sigma_{\theta}^{\delta}(f_2)\rVert_{L^{\infty}(\Omega \times (0,1))}\lVert f_2\rVert\,\lVert \nabla_x \tilde{f}\rVert\abramo{\,dt}\leq \frac{\lVert \tilde{f}\rVert^{\pier{2}}(0)}{2}\\
   & \qquad{} +C\lVert f_1\rVert_{L^{\infty}(0,s;L^{1}(\Omega \times (0,1)))}\int_0^s\lVert \tilde{f}\rVert \, \lVert \nabla_x \tilde{f}\rVert\abramo{\,dt}+C\lVert f_2\rVert_{L^{\infty}(0,s;L^{2}(\Omega \times (0,1)))}\int_0^s\lVert \tilde{f}\rVert_{L^1(\Omega\times (0,1))} \, \lVert \nabla_x \tilde{f}\rVert\abramo{\,dt}\\
   & \quad\leq \frac{\betti{\lVert \tilde{f}\rVert^2}(0)}{2}+\frac{\sqrt{\epsilon}}{2}\int_0^s\lVert \betti{\nabla_x} \tilde{f}\rVert^2\abramo{\,dt}+\frac{C}{\sqrt{\epsilon}}\int_0^s\lVert \tilde{f}\rVert^2\abramo{\,dt}\abramo{\leq \frac{{\lVert \tilde{f}\rVert^2}(0)}{2}+\frac{\sqrt{\epsilon}}{2}\int_0^s\lVert {\nabla} \tilde{f}\rVert^2\abramo{\,dt}+\frac{C}{\sqrt{\epsilon}}\int_0^s\lVert \tilde{f}\rVert}^2\abramo{\,dt}.
\end{align*}
An application of the Gronwall inequality to the previous estimate leads to \eqref{wp23}, which also implies the uniqueness of the weak solution $f$. We have thus proved the first part of Theorem~\ref{thm1}.

\pier{We now consider the case in which Assumptions $(A2)$--$(A3)$ hold. Since, in this setting, $f^{n+1}\in H^2(\bar{\Omega}\times[0,1])$ for every $n\in\{0,\ldots,N-1\}$ and satisfies \eqref{wp25} in the trace sense, an integration by parts allows us to rewrite \eqref{wp35} as}
\begin{equation}
    \label{wp38}
   \left(\partial_t \hat{f}_{\Delta t},\chi\right)-\left(\div \left(\Sigma_{\theta}^{\delta}(\ubar{f}_{\Delta t})\bar{f}_{\Delta t}\right), \chi\right)- (D_{\epsilon}(c)\Delta_x\bar{f}_{\Delta t}, \chi)-\sqrt{\epsilon}\left(\partial_c^2 \bar{f}_{\Delta t},\chi\right)=0,
\end{equation}
valid for any $\chi \in L^2(\Omega\times (0,1))$. Multiplying \eqref{wp38} by a function $\zeta \in L^2(0,T)$ and integrating in time over the interval $[0,T]$, defining $g:=\chi \zeta$, we arrive at the weak formulation
\begin{align}
    \label{wp39}
   & \int_0^T\left(\partial_t \hat{f}_{\Delta t},g\right)\abramo{\,dt}-\int_0^T\left(\div \left(\Sigma_{\theta}^{\delta}(\ubar{f}_{\Delta t})\bar{f}_{\Delta t}\right), g\right)\abramo{\,dt}\nonumber\\
   &\qquad {}- \int_0^T(D_{\epsilon}(c)\Delta_x\bar{f}_{\Delta t}, g)\abramo{\,dt} -\sqrt{\epsilon}\int_0^T\left(\partial_c^2 \bar{f}_{\Delta t},g\right)\abramo{\,dt}=0,
\end{align}
valid for any $g \in L^2(0,T;L^2(\Omega\times (0,1)))$. We observe that the limit function has the regularity \eqref{wp23a}, thanks to \eqref{conv7}, \eqref{conv8} and \eqref{conv10b}. \pier{By virtue of} the convergence properties \eqref{conv8}, \eqref{conv10} and \eqref{conv10b} we directly have that
\begin{align*}
    & \int_0^T\left(\partial_t \hat{f}_{\Delta t},g\right)\abramo{\,dt} \rightarrow \int_0^T\left(\partial_t f,g\right)\abramo{\,dt}, \quad \int_0^T(D_{\epsilon}(c)\Delta_x\bar{f}_{\Delta t}, g)\abramo{\,dt}\rightarrow \int_0^T(D_{\epsilon}(c)\Delta_xf, g)\abramo{\,dt},\\
    &  \sqrt{\epsilon}\int_0^T\left(\partial_c^2 \bar{f}_{\Delta t},g\right)\abramo{\,dt}\rightarrow \sqrt{\epsilon}\int_0^T\left(\partial_c^2 f,g\right)\abramo{\,dt},
\end{align*}
as $\Delta t \to 0$.
For what concerns the second term in \eqref{wp38}, we rewrite it as
\begin{align*}
    &\int_0^T\left(\div \left(\Sigma_{\theta}^{\delta}(\ubar{f}_{\Delta t})\bar{f}_{\Delta t}\right), g\right)\abramo{\,dt}=\underbrace{\int_0^T\left(\left(\Sigma_{\theta}^{\delta}(\ubar{f}_{\Delta t})-\Sigma_{\theta}^{\delta}(f)\right)\cdot \nabla_x\bar{f}_{\Delta t}, g\right)\abramo{\,dt}}_{=:I_1}+\underbrace{\int_0^T\left(\Sigma_{\theta}^{\delta}(f)\cdot \nabla_x(\bar{f}_{\Delta t}-f), g\right)\abramo{\,dt}}_{=:I_2}\\
    & \qquad + \underbrace{\int_0^T\left(\div \left(\Sigma_{\theta}^{\delta}(\ubar{f}_{\Delta t})-\Sigma_{\theta}^{\delta}(f)\right)\bar{f}_{\Delta t}, g\right)\abramo{\,dt}}_{=:I_3}+\underbrace{\int_0^T\left(\div (\Sigma_{\theta}^{\delta}(f))(\bar{f}_{\Delta t}-f), g\right)\abramo{\,dt}}_{=:I_4}\\
    & \qquad +\int_0^T\left(\div \left(\Sigma_{\theta}^{\delta}(f)f\right), g\right)\abramo{\,dt}.
\end{align*}
We observe that
\begin{align*}
&|I_1|\leq \lVert \Sigma_{\theta}^{\delta}(\ubar{f}_{\Delta t})-\Sigma_{\theta}^{\delta}(f)\rVert_{L^{2}(0,T;L^{\infty}(\Omega\times (0,1);\mathbb{R}^d))}\lVert \nabla_x\bar{f}_{\Delta t} \rVert_{L^{\infty}(0,T;L^2(\Omega\times (0,1)))}\lVert g\rVert_{L^2(0,T;L^2(\Omega\times (0,1)))}\\
& \quad \leq C\lVert \ubar{f}_{\Delta t}-f\rVert_{L^{2}(0,T;L^{1}(\Omega\times (0,1)))}\lVert \betti{\nabla_x} \bar{f}_{\Delta t} \rVert_{L^{\infty}(0,T;L^2(\Omega\times (0,1)))}\lVert g\rVert_{L^2(0,T;L^2(\Omega\times (0,1)))}\rightarrow 0
\end{align*}
as $\Delta t \to 0$, thanks to \eqref{wp33} and \eqref{conv11}. For what concerns $I_2$, since $\Sigma_{\theta}^{\delta}(f)g\in L^2(0,T;L^2(\Omega\times (0,1)))$ and thanks to the weak convergence \eqref{conv10}, we have that
\[
I_2=\int_0^T\left(\nabla_x(\bar{f}_{\Delta t}-f), \Sigma_{\theta}(f)g\right)\abramo{\,dt}\rightarrow 0
\]
as $\Delta t \to 0$. As for $I_3$, we have that
\[
|I_3|\leq \lVert \div \left(\Sigma_{\theta}^{\delta}(\ubar{f}_{\Delta t})-\Sigma_{\theta}^{\delta}(f)\right)\rVert_{L^{\infty}(0,T;L^2(\Omega\times (0,1)))}\lVert \bar{f}_{\Delta t}\rVert_{L^{2}(0,T;L^{\infty}(\Omega\times (0,1)))}\lVert g\rVert_{L^{2}(0,T;L^2(\Omega\times (0,1)))}\rightarrow 0
\]
as $\Delta t \to 0$, thanks to \eqref{conv13}, \eqref{wp33} and to the Sobolev embedding $H^2(\bar{\Omega}\times [0,1])\subset\subset C^0(\bar{\Omega}\times [0,1])$. Finally, property $(i)$ of Lemma~\ref{lem2} and the regularity of $f$ imply that
\[
\div (\Sigma_{\theta}^{\delta}(f))g\in L^2\bigl(0,T;L^{2}(\Omega\times (0,1))\bigr).
\]
Hence, the convergence result \eqref{conv11} implies that
\[
I_4=\int_0^T\left(\bar{f}_{\Delta t}-f, \div (\Sigma_{\theta}^{\delta}(f))g\right)\abramo{\,dt}\rightarrow 0
\]
as $\Delta t \to 0$.
We finally conclude that
\[
\int_0^T\left(\div \left(\Sigma_{\theta}^{\delta}(\ubar{f}_{\Delta t})\bar{f}_{\Delta t}\right), g\right)\abramo{\,dt}\rightarrow \int_0^T\left(\div \left(\Sigma_{\theta}^{\delta}(f)f\right), g\right)\abramo{\,dt}
\]
as $\Delta t \to 0$. Hence, the limit function satisfies the strong formulation \eqref{wp19}-\eqref{wp20} a.e. in $\Omega\times [0,1]\times [0,T]$. For what concerns the attainment of the initial condition, the strong convergence \eqref{conv9} and the fact that $f_0\in L^2(\Omega \times (0,1))$ implies that $f(\cdot,\cdot,0)=f_0$ a.e. in $\Omega\times [0,1]$.

{\pier{We are left to establish the strong continuous dependence on the initial data stated in \eqref{wp23s}. Let $f_1^0,f_2^0\in H^{1}(\Omega\times[0,1])$ be two distinct initial data, and let $f_1$ and $f_2$ denote the corresponding strong solutions. We set \( \tilde{f}:=f_1-f_2. \) Subtracting equation \eqref{wp19} satisfied by $f_2$ from that satisfied by $f_1$, testing the resulting equation with $-\Delta\tilde{f}$, and integrating over $\Omega\times[0,1]\times[0,s]$, with $s\in(0,T]$, which is justified by the regularity property \eqref{wp23a}, we obtain, after integration by parts, that}
\begin{align*}
   & \frac{\lVert \nabla \tilde{f}\rVert\abramo{^2}(s)}{2}+\sqrt{\epsilon}\int_0^s\lVert \Delta \tilde{f}\rVert^2\abramo{\,dt}+\int_0^s\left(D(c)\Delta_x \tilde{f},\Delta\tilde{f}\right)\abramo{\,dt}
   = \frac{\lVert \nabla\tilde{f}\rVert\abramo{^2}(0)}{2}-\int_0^s\left(\div \Sigma_{\theta}^{\delta}(f_1) \tilde{f},\Delta \tilde{f}\right)\abramo{\,dt}\\
   & \qquad -\int_0^s\left(\Sigma_{\theta}^{\delta}(f_1)\cdot \nabla_x \tilde{f},\Delta \tilde{f}\right)\abramo{\,dt}-\int_0^s\left(\div \Sigma_{\theta}^{\delta}(\tilde{f}) f_2,\Delta \tilde{f}\right)\abramo{\,dt}-\int_0^s\left(\Sigma_{\theta}^{\delta}(\tilde{f})\cdot \nabla_x f_2,\Delta \tilde{f}\right)\abramo{\,dt}.
\end{align*}
Proceeding similarly to \eqref{wp30cc} to rewrite the third term on the left hand side, we obtain that
\begin{align*}
   & \frac{\lVert \nabla \tilde{f}\rVert\abramo{^2}(s)}{2}+\sqrt{\epsilon}\int_0^s\lVert \Delta \tilde{f}\rVert^2\abramo{\,dt}+\int_0^s\abramo{\int_0^1\int_{\Omega}}D(c)\left(\Delta_x \tilde{f}\right)^2\abramo{\,d\mathbf{x}dcdt}+\int_0^s\abramo{\int_0^1\int_{\Omega}}D(c)|\nabla_x(\partial_c\tilde{f})|^2\abramo{\,d\mathbf{x}dcdt}
   \\
   & \quad = \frac{\lVert \nabla\tilde{f}\rVert\abramo{^2}(0)}{2}+\frac{1}{2}\int_0^s\abramo{\int_0^1\int_{\Omega}}D''(c)|\nabla_x \tilde{f}|^2\abramo{\,d\mathbf{x}dcdt}-\int_0^s\left(\div \Sigma_{\theta}^{\delta}(f_1) \tilde{f},\Delta \tilde{f}\right)\abramo{\,dt}\\
   & \qquad -\int_0^s\left(\Sigma_{\theta}^{\delta}(f_1)\cdot \nabla_x \tilde{f},\Delta \tilde{f}\right)\abramo{\,dt}-\int_0^s\left(\div \Sigma_{\theta}^{\delta}(\tilde{f}) f_2,\Delta \tilde{f}\right)\abramo{\,dt}-\int_0^s\left(\Sigma_{\theta}^{\delta}(\tilde{f})\cdot \nabla_x f_2,\Delta \tilde{f}\right)\abramo{\,dt}.
\end{align*}
}
Using the Cauchy--Schwarz and Young inequalities, the properties $(ii)$ and $(iv)$ of Lemma~\ref{lem1d}, \eqref{wp23a} and \eqref{wp23} we \pier{infer} that
\begin{align*}
   & \frac{\lVert \nabla \tilde{f}\rVert^2(s)}{2}+\sqrt{\epsilon}\int_0^s\lVert \Delta \tilde{f}\rVert^2\abramo{\,dt}+\int_0^s\abramo{\int_0^1\int_{\Omega}}D(c)\left(\Delta_x \tilde{f}\right)^2\abramo{\,d\mathbf{x}dcdt}+\int_0^s\abramo{\int_0^1\int_{\Omega}}D(c)|\nabla_x(\partial_c\tilde{f})|^2\abramo{\,d\mathbf{x}dcdt}
   \\
   & \quad \leq \frac{\lVert \nabla\tilde{f}\rVert^2(0)}{2}+\frac{\sqrt{\epsilon}}{2}\int_0^s\lVert \Delta \tilde{f}\rVert^2\abramo{\,dt}+C\int_0^s\lVert \nabla \tilde{f}\rVert^2\abramo{\,dt}+\frac{C}{\sqrt{\epsilon}}\int_0^s\lVert \div \Sigma_{\theta}^{\delta}(f_1)\rVert_{L^{\infty}(\Omega\times (0,1))}^2\lVert \tilde{f}\rVert^2\abramo{\,dt}\\
   & \qquad +\frac{C}{\sqrt{\epsilon}}\int_0^s\lVert \div \Sigma_{\theta}^{\delta}(\tilde{f})\rVert_{L^{\infty}(\Omega\times (0,1))}^2\lVert f_2\rVert^2\abramo{\,dt}+\frac{C}{\sqrt{\epsilon}}\int_0^s\lVert \Sigma_{\theta}^{\delta}(f_1)\rVert_{L^{\infty}(\Omega\times (0,1);\mathbb{R}^d)}^2\lVert \nabla \tilde{f}\rVert^2\abramo{\,dt}\\
   & \qquad +\frac{C}{\sqrt{\epsilon}}\int_0^s\lVert \Sigma_{\theta}^{\delta}(\tilde{f})\rVert_{L^{\infty}(\Omega\times (0,1);\mathbb{R}^d)}^2\lVert \nabla f_2\rVert^2\abramo{\,dt}\leq \frac{\lVert \nabla\tilde{f}\rVert^2(0)}{2}+\frac{\sqrt{\epsilon}}{2}\int_0^s\lVert \Delta \tilde{f}\rVert^2\abramo{\,dt}\\
   & \qquad + C\int_0^s\lVert \nabla \tilde{f}\rVert^2\abramo{\,dt}+ \frac{C}{\sqrt{\epsilon}\delta^2}\int_0^s\lVert f_1\rVert_{W^{1,1}(\Omega\times (0,1))}^2\lVert \tilde{f}\rVert^2\abramo{\,dt}+\frac{C}{\sqrt{\epsilon}\delta^2}\int_0^s\lVert \tilde{f}\rVert_{W^{1,1}(\Omega\times (0,1))}^2\abramo{\,dt}\\
   & \quad \leq \frac{\lVert \nabla\tilde{f}\rVert^2(0)}{2}+\frac{C}{\sqrt{\epsilon}}\lVert \tilde{f}\rVert^2(0)+\frac{\sqrt{\epsilon}}{2}\int_0^s\lVert \Delta \tilde{f}\rVert^2\abramo{\,dt}+\frac{C}{\sqrt{\epsilon}\delta^2}\int_0^s\lVert \nabla \tilde{f}\rVert^2\abramo{\,dt}.
\end{align*}
An application of the Gronwall inequality to the previous estimate leads to \eqref{wp23s}.
\end{proof}

We conclude this section by proving a further regularity result for the weak solution of problem $\mathcal{P}^{\theta,\delta,\epsilon}$, which \abramo{is uniform in the parameters $\theta,\delta$ and} is valid under the assumption
\begin{itemize}
    \item[(A1bis)] $f_0\in L^{\infty}(\Omega\times (0,1))$, with $f_0\geq 0$ a.e. in $\Omega \times [0,1]$ and $\int_0^1\int_{\Omega}f_0d\vec{x}dc=1$.
\end{itemize}
As discussed in Remark~\ref{rem1}, this is the most natural regularity class required for the initial conditions to represent gray-scale images.
\begin{theorem}
    \label{thm3}
    Let $\theta,\delta,\epsilon>0$, and assume that Assumption $(A1bis)$ holds. Then, the weak solution of Theorem~\ref{thm1} enjoys the further regularity
    \begin{equation}
        \label{wp40}
    f\in L^{\infty}(0,T;L^{\infty}(\Omega\times (0,1))).
    \end{equation}
\end{theorem}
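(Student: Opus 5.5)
We propose to derive the $L^\infty$ bound through a maximum-principle type argument, carried out at the time-discrete level \eqref{wp24}--\eqref{wp25d} and then passed to the limit with Lemma~\ref{lem4}; an equivalent route is Moser/Alikakos iteration with test functions $(f^{n+1})^{p-1}$, $p\to\infty$. The key structural fact is the expansion of the drift,
\[
-\div\bigl(\Sigma_{\theta}^{\delta}(f^n)f^{n+1}\bigr)=-\Sigma_{\theta}^{\delta}(f^n)\cdot\nabla_x f^{n+1}-\bigl(\div\Sigma_{\theta}^{\delta}(f^n)\bigr)f^{n+1}.
\]
The first piece is pure transport and does not increase the maximum. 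The second acts as a linear reaction term. By Lemma~\ref{lem2} its coefficient is bounded by $C/\min(\theta,\delta)$; a more careful computation should give a bound independent of $\theta,\delta$.

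For the uniform bound, we compute
\[
\div\bigl(G^{\delta}(\vec x)\Sigma_\theta(f)\bigr)=\nabla G^{\delta}\cdot\Sigma_\theta(f)+G^{\delta}\,\div\Sigma_\theta(f).
\]
Using \eqref{wp15}--\eqref{wp17}, the term $\div\Sigma_\theta(f)$ equals
\[
\int\!\!\int\Bigl(d\,\chi_{\theta,\Delta_1}+\nabla\chi_{\theta,\Delta_1}(\vec x-\vec x^*)\cdot(\vec x-\vec x^*)\Bigr)\chi_{\theta,\Delta_2}\,f^*.
\]
Here $\nabla\chi_{\theta,\Delta_1}(\vec z)\cdot\vec z=-2|\vec z|^2H_\theta'(\Delta_1^2-|\vec z|^2)\le0$, since $H_\theta'\ge0$. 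Hence for $f\ge0$ with unit mass we get the one-sided bound $\div\Sigma_\theta(f)\le d$, uniformly in $\theta$. For the boundary part $\nabla G^\delta\cdot\Sigma_\theta$, we would choose $G^\delta$ as a function of the distance to $\partial\Omega$. Then $\nabla G^\delta$ is proportional to the inward normal, and the sign of $\nabla G^{\delta}\cdot\Sigma_\theta(f)$ near $\partial\Omega$ needs checking: $\Sigma_\theta$ points roughly outward there, because the interacting particles $\vec x^*$ lie inside. If this sign works out, we obtain $-\div\Sigma_\theta^\delta(f^n)\ge -d$. \textbf{This one-sided bound, uniform in $\theta,\delta$, is the main obstacle.} If the sign argument fails, we fall back to the bound $C/\min(\theta,\delta)$ from Lemma~\ref{lem2}. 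That bound still proves \eqref{wp40} for fixed parameters, which is all the statement requires.

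With $M_n:=\lVert f^n\rVert_{L^\infty}$, we test \eqref{wp24} with $(f^{n+1}-k)_+$, where $k=M_n/(1-\Lambda\Delta t)$ and $\Lambda$ is the reaction bound. The diffusion terms $D_\epsilon\Delta_x$ and $\sqrt\epsilon\,\partial_c^2$ have nonnegative coefficients. We integrate by parts; the $c$-dependent coefficient $D_\epsilon(c)$ multiplies only $x$-derivatives, so there are no commutator terms, and the boundary terms vanish by \eqref{wp25d}. For the transport term we use $\Sigma\cdot\nabla_x f\,(f-k)_+=\tfrac12\Sigma\cdot\nabla_x(f-k)_+^2$, integrate by parts again, and use $\Sigma_\theta^\delta|_{\partial\Omega}=0$ from Lemma~\ref{lem1d}(i). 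This leaves $-\tfrac12(\div\Sigma)(f-k)_+^2$ together with $(\div\Sigma)k(f-k)_+$, which are controlled by $\Lambda$.

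Alternatively, since $f^{n+1}\in H^3\subset C^0$ by Remark~\ref{rem:h3}, we can evaluate the equation at a maximum point, which is valid for the strong solution. Either way we obtain $M_{n+1}\le M_n(1-\Lambda\Delta t)^{-1}$, hence $M_n\le e^{2\Lambda T}\lVert f^0\rVert_\infty$. For the initial datum, $\tilde f_0$ from \eqref{wp23c} satisfies $\lVert\tilde f_0\rVert_\infty\le\lVert f_0\rVert_\infty$ by the maximum principle for $I-\Delta t\Delta$ with Neumann conditions. Finally, the piecewise-constant interpolants $\bar f_{\Delta t}$ are then bounded in $L^\infty$, and the a.e. convergence \eqref{conv5} together with uniqueness from Theorem~\ref{thm1} gives \eqref{wp40}.
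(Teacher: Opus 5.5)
Your proof is correct for the statement as posed (fixed $\theta,\delta,\epsilon$), but it takes a genuinely different route from the paper.

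The paper argues at the continuous level by a Moser--Alikakos iteration. It formally tests \eqref{wp22} with $pf^{p-1}$ and absorbs the drift using only $|\Sigma_\theta^\delta(f)|\le\Delta_1$ against the artificial diffusion $\sqrt{\epsilon}\,\|\nabla(f^{p/2})\|^2$. It then closes a recursion for $\sup_t\|f\|_{L^{l_k}}^{l_k}$, $l_{k+1}=\tfrac43 l_k$, via the Sobolev embedding and an iteration lemma.

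You instead treat $\div\Sigma_\theta^\delta(f^n)$ as a reaction coefficient and discard the diffusion. The Stampacchia truncation closes as you describe. With $w=(f^{n+1}-k)_+$, both drift pieces appear on the left as $-\tfrac12(\div\Sigma_\theta^\delta(f^n),w^2)-k(\div\Sigma_\theta^\delta(f^n),w)$, so only an upper bound on the divergence is needed. With $k=M_n/(1-\Lambda\Delta t)$ the $k$-terms cancel and you are left with $(1/\Delta t-\Lambda/2)\|w\|^2\le 0$. This is rigorous at the discrete level, and $\|\tilde f_0\|_\infty\le\|f_0\|_\infty$ together with \eqref{conv5} finishes it.

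The trade-off is in the parameter dependence:
\begin{itemize}
\item The paper's bound is uniform in $\theta,\delta$, which is exactly what is used afterwards for Theorem~\ref{thm5}, but it blows up as $\epsilon\to0$.
\item Your fallback bound $e^{CT/\min(\theta,\delta)}\|f_0\|_\infty$ degenerates as $\theta,\delta\to0$ but is uniform in $\epsilon$. It would therefore even give the $L^\infty$ bound for $\mathcal{P}^{\theta,\delta,0}$ that Remark~\ref{mosal} says is out of reach of Moser--Alikakos.
\end{itemize}

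On your hoped-for uniform one-sided bound: your interior computation $\div\Sigma_\theta(f)\le d$ is right. The boundary term $\nabla G^\delta\cdot\Sigma_\theta(f)$ is uniformly bounded when $\Omega$ is convex and $G^\delta$ is a nondecreasing function of $\mathrm{dist}(\vec x,\partial\Omega)/\delta$. In that case $\vec n_{\mathrm{in}}\cdot(\vec x-\vec x^*)\le\mathrm{dist}(\vec x,\partial\Omega)\le\delta$ for all $\vec x^*\in\Omega$. For nonconvex $\Omega$, however, curvature allows $\vec n_{\mathrm{in}}\cdot(\vec x-\vec x^*)$ of order $\Delta_1^2$, so that term can be of size $1/\delta$.

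Finally, drop the ``evaluate at a maximum point'' alternative. $H^3$ does not embed into $C^2$ in dimension $d+1$ (for $d=3$ not even into $C^1$), and boundary maxima would need separate treatment. The truncation argument is the one to keep.
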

\begin{proof}
The proof is based on a Moser--Alikakos iteration argument. We start by deriving formal estimates which give the boundedness of the weak solution of 
Theorem~\ref{thm1} in $L^{\infty}(0,T;L^p(\Omega \times (0,1)))$, for any $p\in [1,+\infty)$. Note that the bound for $p\in [1,2]$ has already been obtained in \eqref{wp21b}. We proceed by formally taking $g=pf^{p-1}$, with $p\in (2,+\infty)$, in \eqref{wp22}. We note that, given the regularity \eqref{wp21b}, $pf^{p-1}$ is not 
an admissible test function in \eqref{wp22}. 
Rigorous estimates could be obtained by
considering a proper truncation of the test function of the form $g=p\max(R,f)^{p-1}$, for some $R>0$, and then taking the limit as $R\to +\infty$. Since these steps are standard, we show here the formal calculations only for the sake of brevity.

Observing that
\[
<\partial_tf,pf^{p-1}>=\frac{d}{dt}(f^p,1),
\]
which could be justified rigorously by means of truncations of the test function, observing that $f\geq 0$ a.e. in $\Omega\times (0,1)$ and adopting the same arguments as in \cite[Lemma~$2.5$]{grun}, we \pier{infer} that
\begin{align*}
    &(f^p,1)(T)+p(p-1)\int_0^T\left(D(c)\nabla_xf,f^{p-2}\nabla_xf\right)\abramo{\,dt}+\sqrt{\epsilon}p(p-1)\int_0^T\left(\partial_cf,f^{p-2}\partial_cf\right)\abramo{\,dt}\\
    & \quad =(f_0^p,1)-p(p-1)\int_0^T(\Sigma_{\theta}^{\delta}(f)f^{p-1},\nabla_xf)\abramo{\,dt}.
\end{align*}
Rewriting some terms, using moreover \eqref{wp18}, property $(ii)$ of 
Lemma~\ref{lem1d}, the Cauchy--Schwarz and the Young inequalities and \eqref{wp21b}, we obtain that
\begin{align*}
    &(f^p,1)(T)+4\sqrt{\epsilon}\frac{(p-1)}{p}\int_0^T\lVert \nabla \left(f^{p/2}\right) \rVert^2\abramo{\,dt}\leq (f_0^p,1)+C(p-1)\int_0^T\lVert f^{p/2}\rVert\,\lVert \nabla \left(f^{p/2}\right) \rVert\abramo{\,dt}\\
    & \quad \leq (f_0^p,1)+2\sqrt{\epsilon}\frac{(p-1)}{p}\int_0^T\lVert \nabla \left(f^{p/2}\right) \rVert^2\abramo{\,dt}+C\frac{p(p-1)}{\sqrt{\epsilon}}\int_0^T(f^p,1)\abramo{\,dt},
\end{align*}
from which we conclude that
\begin{equation}
    \label{wp40b}
    (f^p,1)(T)+2\sqrt{\epsilon}\frac{(p-1)}{p}\int_0^T\lVert \nabla \left(f^{p/2}\right) \rVert^2\abramo{\,dt}\leq (f_0^p,1)+C_1\frac{p(p-1)}{\sqrt{\epsilon}}\int_0^T(f^p,1)\abramo{\,dt}.
\end{equation}
A Gronwall argument then implies that $f\in L^{\infty}(0,T;L^p(\Omega\times (0,1)))$ for any $p\in [1,+\infty)$.

We continue from \eqref{wp40b}, rewriting the second term on the right hand side, employing a trilinear H\"{o}lder inequality with exponents $(6,3/2,6)$ and the Sobolev embdedding $H^1(\Omega\times [0,1])\subset L^6(\Omega\times (0,1))$, as
\begin{align*}
 & C_1\frac{p(p-1)}{\sqrt{\epsilon}}\int_0^T(f^p,1)\abramo{\,dt}\\
 & \quad =C_1\frac{p(p-1)}{\sqrt{\epsilon}}\int_0^T(f^{p/2},f^{p/2})\abramo{\,dt}\leq C_1|\Omega|^{1/6}\frac{p(p-1)}{\sqrt{\epsilon}}\int_0^T\left(\int_0^1\int_{\Omega}f^{\frac{3}{4}p}\abramo{\,d\mathbf{x}dc}\right)^{2/3}\lVert f^{p/2}\rVert_{L^6(\Omega\times (0,1))}\abramo{\,dt}\\
 & \quad \leq C_1C_2|\Omega|^{1/6}\frac{p(p-1)}{\sqrt{\epsilon}}\int_0^T\left[\int_0^1\int_{\Omega}\left(f^p+\left|\nabla\left(f^{p/2}\right)\right|^2\right)\abramo{\,d\mathbf{x}dc}\right]^{1/2}\left(\int_0^1\int_{\Omega}f^{\frac{3}{4}p}\right)^{2/3}\abramo{\,dt}\\
 & \quad \leq C_1\frac{p(p-1)}{2\sqrt{\epsilon}}\int_0^T(f^p,1)\abramo{\,dt}+C_1C_2^2|\Omega|^{1/3}\frac{p(p-1)}{2\sqrt{\epsilon}}\int_0^T\left(\int_0^1\int_{\Omega}f^{\frac{3}{4}p}\abramo{\,d\mathbf{x}dc}\right)^{4/3}\abramo{\,dt}\\
 & \qquad + \sqrt{\epsilon}\frac{(p-1)}{p}\int_0^T\lVert \nabla \left(f^{p/2}\right) \rVert^2\abramo{\,dt}+C_1^2C_2^2|\Omega|^{1/3}\frac{p^3(p-1)}{4\epsilon^{3/2}}\int_0^T\left(\int_0^1\int_{\Omega}f^{\frac{3}{4}p}\abramo{\,d\mathbf{x}dc}\right)^{4/3}\abramo{\,dt},
\end{align*}
where $C_2$ is the constant of the Sobolev immersion. Rearranging terms in the previous inequality and observing that $p\geq 1$, we obtain that
\begin{equation}
    \label{wp40c}
    C_1\frac{p(p-1)}{\sqrt{\epsilon}}\int_0^T(f^p,1)\abramo{\,dt}\leq 2\sqrt{\epsilon}\frac{(p-1)}{p}\int_0^T\lVert \nabla \left(f^{p/2}\right) \rVert^2\abramo{\,dt}+C\frac{p^4}{\epsilon^{3/2}}\int_0^T\left(\int_0^1\int_{\Omega}f^{\frac{3}{4}p}\abramo{\,d\mathbf{x}dc}\right)^{4/3}\abramo{\,dt}.
\end{equation}
Inserting \eqref{wp40c} in \eqref{wp40b} we finally obtain that
\begin{equation}
    \label{wp40d}
    (f^p,1)(T)\leq (f_0^p,1)+C\frac{p^4}{\epsilon^{3/2}}\int_0^T\left(\int_0^1\int_{\Omega}f^{\frac{3}{4}p}\abramo{\,d\mathbf{x}dc}\right)^{4/3}\abramo{\,dt}.
\end{equation}
Observing that \eqref{wp40d} is also valid for any $t\in(0,T]$, introducing the sequence $(l_k)_{k\geq 0}$ of real numbers defined by
\[
l_0=1,\;\; l_{k+1}=\frac{4}{3}l_k, \;\; k\in \mathbb{N},
\]
with $p=l_{k+1}$, and defining moreover the sequence $(\gamma_k)_{k\geq 0}$ of positive real numbers as
\[
\gamma_{k+1}:=\sup_{[0,T]}\left[\int_0^1\int_{\Omega}f^{l_{k+1}}\abramo{\,d\mathbf{x}dc}\right],
\]
we can rewrite \eqref{wp40d} as
\begin{equation}
    \label{wp40e}
    \gamma_{k+1}\leq C\left(\epsilon\right)l_{k+1}^4\max\left \{ \lVert f_0\rVert_{L^{\infty}(\Omega \times (0,1))}^{l_{k+1}}, \gamma_k^{4/3}\right \},
\end{equation}
with $C\left(\epsilon\right)\to +\infty$
as $\epsilon \to 0^+$.
Thanks to Lemma~$A.1$ in \cite{laurencot}, \eqref{wp40e} implies that $\left(\gamma_k^{1/l_k}\right)_{k\geq 0}$ is bounded, and \eqref{wp40} follows from taking the limit as $k\to +\infty$.
\end{proof}

\section{Passage to the limit in regularized systems
}

\pier{We start a new section which contain our limit procedures.}

\subsection{Limit problem $\mathcal{P}^{\theta,\delta,\epsilon}\to \mathcal{P}^{\epsilon}$ as $\theta,\delta\to 0$}
\pier{We first} consider the limit problem of $\mathcal{P}^{\theta,\delta,\epsilon}$ as $\theta,\delta\to 0$, giving an existence and regularity result for the case of the un-regularized nonlocal operator, keeping the diffusive regularizations in the system. Since the existence result of a unique weak solution of Theorem~\ref{thm2} and the a-priori estimate \eqref{wp30} of Lemma~\ref{lem3} are uniform in the parameters $\theta,\delta$, \pier{it is} easy to generalize the existence result of a weak solution of Theorem~\ref{thm1} to the case $\theta,\delta\to 0$. \pier{It is} sufficient to consider $\theta=\delta=\Delta t$ along the proof 
of Theorem~\ref{thm1}, taking the limit as $\Delta t \to 0$ in problem $\mathcal{P}_n^{\theta,\delta,\epsilon}$ and using the convergence properties \eqref{conv1}-\eqref{conv6} of Lemma~\ref{lem4}. 
\abramo{The only new part with respect to the proof of Theorem~\ref{thm1} is to consider the limit as $\theta,\delta\to 0$ in the second term in \eqref{wp36}, which entails to add and subtract the term $\int_0^T(\Sigma(f)f,\nabla_xg)\abramo{\,dt}$ in the right hand side of \eqref{wp36} and show that
\begin{equation}
\label{wp40td}
\int_0^T\left(\left(\Sigma_{\theta}^{\theta}(f)-\Sigma(f)\right)f,\nabla_xg\right)\abramo{\,dt}\to 0
\end{equation}
as $\theta\to 0$. \pier{It is} possible to prove, thanks to property \betti{(jv)} of $G^{\theta}$ and to \eqref{wp13}, that $\Sigma_{\theta}^{\theta}(f)-\Sigma(f)\to 0$ a.e. in $\Omega \times (0,1) \times (0,T)$. This will be shown later in the proof of Theorem~\ref{thm8}; \pier{therefore, we omit the} details here. Also, thanks to property $(i)$ of Lemma~\ref{lem1} and to the regularity of $f$\pier{,} the term $\Sigma_{\theta}^{\theta}(f)-\Sigma(f)$ is uniformly bounded. Then, since $f \nabla_xg\in L^1(\Omega \times (0,1)\times (0,T);\mathbb{R}^d)$, an application of the Dominated Convergence Theorem proves \eqref{wp40td}.}
We thus have the following existence theorem, which we report without proof since \pier{it is} straightforward.
\begin{theorem}
\label{thm4}
Let $\epsilon>0$, and \pier{let the Assumption~\pier{$(A1)$} of Theorem~\ref{thm1} holds.}
Then, there exists a unique weak solution $f$ of $\mathcal{P}^{0,0,\epsilon}$, with
\begin{equation}
\label{wp41}
f\in L^{\infty}(0,T;L^2(\Omega \times (0,1)))\cap L^{2}(0,T;H^1(\Omega \times [0,1]))\cap H^1\bigl(0,T;\left(H^1(\Omega \times [0,1])\right)'\bigr),
\end{equation}
which satisfies the weak formulation
\begin{equation}
    \label{wp42}
    \int_0^T\left<\partial_t f,g\right>\abramo{\,dt}+\int_0^T(\Sigma(f)f,\nabla_x g)\abramo{\,dt}+\int_0^T(D_{\epsilon}(c)\nabla_x f,\nabla_x g)\abramo{\,dt}+\sqrt{\epsilon}\int_0^T\left(\partial_c f,\partial_c g\right)\abramo{\,dt}=0
\end{equation}
for all $g\in L^{2}(0,T;H^1(\Omega \times [0,1]))$, satisfying the initial condition $f(\vec{x},c,0)=f_0(\vec{x},c)$ a.e. in $\Omega \times [0,1]$ and with the further properties that $f\geq 0$ a.e. in $\Omega \times [0,1]\times [0,T]$ and $\int_0^1\int_{\Omega}fd\vec{x}dc=1$. The weak solution enjoys the following continuous dependence property from the initial data: let $f_1^0,f_2^0\in L^{2}(\Omega\times (0,1))$ be two different initial data, and let $f_1,f_2$ be the two corresponding weak solutions, then the following estimate holds
\begin{equation}
    \label{wp43}
    \lVert f_1-f_2\rVert_{L^{\infty}(0,s;L^{2}(\Omega\times (0,1)))}\leq C\lVert f_1^0-f_2^0\rVert_{L^{2}(\Omega\times (0,1))},
\end{equation}
for any $s\in (0,T]$. 
\end{theorem}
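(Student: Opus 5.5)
We argue by a diagonal limit in the scheme $\mathcal{P}_n^{\theta,\delta,\epsilon}$, choosing $\theta=\delta=\Delta t$ and letting $\Delta t\to0$ with $\epsilon>0$ fixed. The key point is that the relevant bounds do not depend on $\theta,\delta$. The solvability of Theorem~\ref{thm2}, the mass and sign identity \eqref{wp28}, and the estimates \eqref{wp30} all use only property $(ii)$ of Lemma~\ref{lem1d} and $\Delta t<4\sqrt{\epsilon}/\Delta_1^2$. None of them uses the $\theta$- or $\delta$-dependent bounds of Lemma~\ref{lem2}. Hence \eqref{ap1}, \eqref{ap2} and the bound on $\partial_t\hat f_{\Delta t}$ in $L^2(0,T;(H^1)')$ hold uniformly along the diagonal sequence. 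As in Lemma~\ref{lem4}, Banach--Alaoglu and Aubin--Lions then give a limit $f$ with the regularity \eqref{wp41}, together with the convergences \eqref{conv1}--\eqref{conv5}.

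Next we pass to the limit in \eqref{wp36}. The linear terms converge exactly as in the proof of Theorem~\ref{thm1}. The nonlocal term is split as
\[
\bigl(\Sigma_{\theta}^{\theta}(\ubar f_{\Delta t})-\Sigma_{\theta}^{\theta}(f)\bigr)\bar f_{\Delta t}
+\Sigma_{\theta}^{\theta}(f)(\bar f_{\Delta t}-f)
+\bigl(\Sigma_{\theta}^{\theta}(f)-\Sigma(f)\bigr)f
+\Sigma(f)f .
\]
The first two pieces are handled like $I_1,I_2$, because the bound $|\Sigma_\theta^\theta(h)|\le\Delta_1\|h\|_{L^1}$ is uniform in $\theta$.

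The third piece, \eqref{wp40td}, is the main obstacle. First fix $(\vec x,c,t)$. By \eqref{wp13}, $P_{\theta,\Delta_1,\Delta_2}(\vec x,\cdot,c,\cdot)\to P_{\Delta_1,\Delta_2}(\vec x,\cdot,c,\cdot)$ for a.e. $(\vec x^*,c^*)$. The only exceptional set is the sphere $|\vec x-\vec x^*|=\Delta_1$ together with $|c-c^*|=\Delta_2$, which is negligible. The integrand is dominated by $\mathrm{diam}(\Omega)|f(\vec x^*,c^*,t)|\in L^1$ for a.e.\ $t$. Dominated convergence therefore gives $\Sigma_\theta(f)\to\Sigma(f)$ pointwise. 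Combined with $G^\theta\to1$ a.e.\ (property (jv)), this yields $\Sigma_\theta^\theta(f)\to\Sigma(f)$ a.e. Both operators are bounded by $\Delta_1$, since $\|f(t)\|_{L^1}=1$. Because $f\nabla_xg\in L^1$, a second application of dominated convergence proves \eqref{wp40td}. Hence $f$ satisfies \eqref{wp42}.

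The initial condition follows from the embedding into $C^0([0,T];L^2)$ and $\tilde f_0\to f_0$, as before. Nonnegativity and unit mass follow from Fatou applied to \eqref{wp28} and from testing with $g\equiv1$.

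For uniqueness and \eqref{wp43}, repeat the Gronwall argument based on \eqref{wp37} with $\Sigma$ in place of $\Sigma_\theta^\delta$. Test with $\tilde f H(s-t)$ and use only Lemma~\ref{lem1}(i): $\|\Sigma(f_1)\|_\infty\le\Delta_1$ and $\|\Sigma(\tilde f)\|_\infty\le\Delta_1\|\tilde f\|_{L^1}\le C\|\tilde f\|$. Use also $f_2\in L^\infty(0,T;L^2)$, and absorb the gradient term with $\sqrt\epsilon$. Uniqueness also shows that the whole sequence, not just a subsequence, converges.
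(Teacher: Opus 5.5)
Your proposal is correct and follows essentially the same route as the paper: the diagonal choice $\theta=\delta=\Delta t$ in the scheme, the observation that Theorem~\ref{thm2} and \eqref{wp30} are uniform in $\theta,\delta$, and the one new step \eqref{wp40td}, handled by a.e.\ convergence of $\Sigma_\theta^\theta(f)$ to $\Sigma(f)$ (via \eqref{wp13} and property (jv) of $G^\theta$) followed by dominated convergence. Your uniqueness and continuous-dependence argument is the Gronwall estimate from the proof of Theorem~\ref{thm1} with $\Sigma$ in place of $\Sigma_\theta^\delta$, which is the argument the paper has in mind when it omits the proof.
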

Then, since also the a-priori estimate of Theorem~\ref{thm3} is uniform in $\theta$ and $\delta$, \pier{it is} possible to generalize Theorem~\ref{thm3} to the case $\theta=\delta=0$, by taking in the limit system~\eqref{wp42} the same testing procedure as in the proof of Theorem~\ref{thm3}. We thus have the following theorem. 
\begin{theorem}
    \label{thm5}
    Let $\epsilon>0$, and assume that Assumption $(A1bis)$ holds. Then, the weak solution of Theorem~\ref{thm4} enjoys the further regularity
    \begin{equation}
        \pier{\label{wp40new}}
    f\in L^{\infty}(0,T;L^{\infty}(\Omega\times (0,1))).
    \end{equation}
\end{theorem}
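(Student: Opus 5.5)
The plan is to obtain \eqref{wp40new} by lower semicontinuity from the uniform-in-$(\theta,\delta)$ bound of Theorem~\ref{thm3}, rather than re-running the Moser--Alikakos iteration on \eqref{wp42}. The limiting solution of Theorem~\ref{thm4} is itself the limit of the solutions $f_\theta$ of $\mathcal{P}^{\theta,\theta,\epsilon}$ along the procedure just described.

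\textbf{Step 1: a bound uniform in $\theta,\delta$.} Let $f_\theta$ be the weak solution of $\mathcal{P}^{\theta,\theta,\epsilon}$ with datum $f_0$ satisfying $(A1bis)$. In the proof of Theorem~\ref{thm3}, the drift entered only through the bound $\lVert\Sigma_\theta^\delta(f)\rVert_{L^\infty}\le\Delta_1\lVert f\rVert_{L^1}=\Delta_1$, given by property $(ii)$ of Lemma~\ref{lem1d} together with $f\ge0$ and mass one. Hence $C_1$ in \eqref{wp40b} and $C(\epsilon)$ in \eqref{wp40e} do not depend on $\theta,\delta$. Lemma~A.1 of \cite{laurencot} then gives
\[
\sup_{k}\gamma_k^{1/l_k}\le M(\epsilon,\lVert f_0\rVert_{L^\infty(\Omega\times(0,1))},T),
\]
and therefore $\lVert f_\theta\rVert_{L^\infty(\Omega\times(0,1)\times(0,T))}\le M$, with $M$ independent of $\theta$.

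\textbf{Step 2: passage to the limit.} This step needs care: Theorem~\ref{thm4} is stated for $f_0\in L^2$, and its construction uses the smoothed datum $\tilde f_0$ at the discrete level. To avoid tracking how the $L^\infty$ norm behaves under the elliptic smoothing, I would first apply Theorem~\ref{thm3} directly at fixed $\theta$ with datum $f_0$, which already gives the bound $M$. I would then use that, by the uniform estimate \eqref{wp30}, $f_\theta\to f$ weakly-$*$ in $L^\infty(0,T;L^2)$ along a subsequence, with limit satisfying \eqref{wp42}. By the uniqueness in Theorem~\ref{thm4}, this limit coincides with the solution of Theorem~\ref{thm4}. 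Since the ball of radius $M$ in $L^\infty(\Omega\times(0,1)\times(0,T))$ is weakly-$*$ closed, and $L^\infty\subset L^2$ on a bounded set, the limit $f$ satisfies $\lVert f\rVert_{L^\infty}\le M$, which is \eqref{wp40new}.

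\textbf{Main obstacle.} The delicate point is making the formal testing with $pf^{p-1}$ rigorous while keeping the truncation constants independent of $\theta$. I would handle it exactly as in the proof of Theorem~\ref{thm3}: test with the truncation $p\min(f,R)^{p-1}$, which belongs to $L^2(0,T;H^1)$ for fixed $R$, and observe that every constant arising depends only on $\Delta_1$, $\epsilon$, $p$, and the Sobolev constant $C_2$. The estimate then passes to the limit $R\to\infty$ by monotone convergence.

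If one preferred to argue directly on \eqref{wp42}, the same computation goes through with $\Sigma$ in place of $\Sigma_\theta^\delta$. The boundary term vanishes because \eqref{wp42} is already in weak form, and property $(i)$ of Lemma~\ref{lem1} gives $|\Sigma(f)|\le\Delta_1$. This yields the same bounds \eqref{wp40b}--\eqref{wp40e}, so the direct route serves as a consistency check on Step 2.
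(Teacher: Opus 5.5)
Your proof is correct in substance, but your main route is not the paper's. The paper disposes of Theorem~\ref{thm5} by noting that the Moser--Alikakos computation of Theorem~\ref{thm3} uses only the following facts:
\begin{itemize}
\item $|\Sigma^\delta_\theta(f)|\le\Delta_1\lVert f\rVert_{L^1}=\Delta_1$;
\item $f\ge0$ and unit mass;
\item $D_\epsilon\ge\sqrt{\epsilon}$.
\end{itemize}
All of these survive for $\Sigma$ by Lemma~\ref{lem1}$(i)$ and Theorem~\ref{thm4}. The paper then repeats the same (truncated) testing directly on the limit system \eqref{wp42}, which is exactly your closing ``consistency check''. Your Steps~1--2 instead use Theorem~\ref{thm3} as a black box. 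You extract a bound $M$ independent of $\theta$ and transfer it to the limit by closedness of the $L^\infty$-ball under weak($*$) convergence, with uniqueness in Theorem~\ref{thm4} identifying the limit. This spares you from re-justifying the formal testing on the unregularized equation. The cost is an additional limit passage and a reliance on uniqueness. The direct route, by contrast, applies to any weak solution of \eqref{wp42} in the class \eqref{wp41}, however it was constructed.

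Be aware that this additional limit passage is not supplied by the paper. Theorem~\ref{thm4} is obtained by setting $\theta=\delta=\Delta t$ in the discrete scheme, not as a limit of the continuous solutions $f_\theta$ of $\mathcal{P}^{\theta,\theta,\epsilon}$. So your opening sentence is inaccurate, and Step~2 is yours to carry out. As written it is too thin: weak-$*$ convergence in $L^\infty(0,T;L^2)$ alone does not pass to the limit in $\Sigma^\theta_\theta(f_\theta)f_\theta$. Nor does it place the limit in the class \eqref{wp41}, where uniqueness holds. You need:
\begin{itemize}
\item $\theta$-uniform bounds in $L^2(0,T;H^1)$, from \eqref{wp30} by lower semicontinuity;
\item $\theta$-uniform bounds in $H^1(0,T;(H^1)')$, by comparison in \eqref{wp22} using $|\Sigma^\theta_\theta(f_\theta)|\le\Delta_1$;
\item Aubin--Lions, to get strong $L^2$ convergence;
\item the a.e.\ convergence $\Sigma^\theta_\theta(f)\to\Sigma(f)$ together with dominated convergence, as in the discussion around \eqref{wp40td}.
\end{itemize}
These ingredients are standard and all available, so once you add them your argument does yield \eqref{wp40new}.
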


\subsection{Limit problem $\mathcal{P}^{\theta,\delta,\epsilon}\to \mathcal{P}^{\theta,\delta}$ as $\epsilon\to 0$}
The study of the limit problem of $\mathcal{P}^{\theta,\delta,\epsilon}$ as $\epsilon\to 0$ is more delicate than the previous one, since the a-priori estimates of Lemma~\ref{lem3} depend on inverse powers of $\epsilon$. We need to obtain a-priori estimates for the discrete solution of Problem $\mathcal{P}_n^{\theta,\delta,\epsilon}$ which are uniform in $\epsilon$, for given $\theta,\delta>0$, employing in particular the properties of the operator $\Sigma_{\theta}^{\delta}$ introduced in Lemma~\ref{lem2}. 
We are able to prove the following existence and regularity result. 
\begin{theorem}
\label{thm6}
Let $\theta,\delta>0$, \betti{and \pier{let the} Assumptions \pier{$(A2)$--$(A3)$} hold true}.
Then, there exists a unique strong solution $f$ of problem $\mathcal{P}^{\theta,\delta,0}$, with
\begin{equation}
\label{wp41new}
f\in L^{\infty}(0,T;H^1(\Omega \times (0,1)))\cap H^1\bigl(0,T;L^2(\Omega \times (0,1))\bigr), \quad \sqrt{D(c)}\Delta_x f\in L^2\bigl(0,T;L^2(\Omega \times [0,1])\bigr),
\end{equation}
which satisfies \eqref{wp19}-\eqref{wp21} (with $\epsilon=0$) a.e. in $\Omega\times [0,1]\times [0,T]$,
with $f(\vec{x},c,0)=f_0(\vec{x},c)$ a.e. in $\Omega \times [0,1]$ and with the further properties that $f\geq 0$ a.e. in $\Omega \times [0,1]\times [0,T]$ and $\int_0^1\int_{\Omega}fd\vec{x}dc=1$. Moreover, the strong solution enjoys the following continuous dependence property on the initial data: let $f_1^0,f_2^0\in H^{1}(\Omega\times [0,1])$ be two different initial data, and let $f_1,f_2$ be the two corresponding solutions, then the following estimate holds
\begin{equation}
    \label{wp43s2}
    \lVert f_1-f_2\rVert_{L^{\infty}(0,s;L^{2}(\Omega\times (0,1)))}\leq C\lVert f_1^0-f_2^0\rVert_{L^{2}(\Omega\times (0,1))},
\end{equation}
for any $s\in (0,T]$.
\end{theorem}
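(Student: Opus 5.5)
The plan is to obtain a priori estimates for the strong solutions $f_\epsilon$ of $\mathcal{P}^{\theta,\delta,\epsilon}$ given by Theorem~\ref{thm1} (under (A2)--(A3)) that are uniform in $\epsilon\in(0,1]$ for fixed $\theta,\delta>0$. We then pass to the limit $\epsilon\to0$ by compactness. Equivalently, the estimates can be derived at the time-discrete level \eqref{wp24} and passed to the limit along the lines of Lemma~\ref{lem4}.

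\textbf{Step 1: $L^2$ estimate uniform in $\epsilon$.} The estimate \eqref{wp30} degenerates as $\epsilon\to0$ because the drift term was absorbed into the diffusion. Instead, we integrate the drift by parts, using $\Sigma_\theta^\delta|_{\partial\Omega}=0$ (Lemma~\ref{lem1d}(i)):
\[
(\Sigma_\theta^\delta(f)f,\nabla_x f)=-\tfrac12(\div\Sigma_\theta^\delta(f),f^2).
\]
By Lemma~\ref{lem2}(i) and the conservation of mass, $|\div\Sigma_\theta^\delta(f)|\le C/\min(\theta,\delta)$. Testing with $f$ and applying Gronwall then gives $\sup_t\|f\|^2\le C(T,\theta,\delta)$, with no dependence on $\epsilon$.

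\textbf{Step 2: $H^1$ estimate uniform in $\epsilon$.} We test with $-\Delta f=-\Delta_x f-\partial_c^2 f$, exactly as in the derivation of \eqref{wp31}. The cross term $(D\Delta_x f,\partial_c^2 f)$ is handled by \eqref{wp30cc} under (A2); it produces $\tfrac12(D''|\nabla_x f|^2,1)\le C\|\nabla f\|^2$. The key change is the drift. We expand
\[
\div(\Sigma f)=\Sigma\cdot\nabla_x f+(\div\Sigma)f,
\]
with $\Sigma=\Sigma_\theta^\delta(f)$, and we do \emph{not} estimate against $\sqrt\epsilon\|\Delta f\|^2$. We integrate by parts again, moving one derivative onto the smooth coefficients. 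The term $(\Sigma\cdot\nabla_x f,\Delta_x f)$ equals $-\sum_{i,j}(\partial_i\Sigma_j\,\partial_j f,\partial_i f)+\tfrac12(\div\Sigma,|\nabla_x f|^2)$, with the boundary terms vanishing since $\Sigma=0$ on $\partial\Omega$. For $(\Sigma\cdot\nabla_x f,\partial_c^2 f)$, we integrate by parts in $c$, using $\partial_c f=0$ at $c=0,1$. This yields terms involving $\partial_c\Sigma\cdot\nabla_x f\,\partial_c f$, bounded by Lemma~\ref{lem2} since $\Sigma_\theta^\delta(f)\in W^{2,\infty}$ with norm controlled by $\|f\|_{L^1}=1$. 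It also yields $(\Sigma\cdot\nabla_x\partial_c f,\partial_c f)=-\tfrac12(\div\Sigma,|\partial_c f|^2)$. The term $((\div\Sigma)f,\Delta f)$ is treated the same way, giving $-(\nabla(\div\Sigma)f+\div\Sigma\,\nabla f,\nabla f)$; the needed second derivatives of $\Sigma$ are bounded by $C/\min(\theta^2,\delta^2)$. All terms are then bounded by $C(\theta,\delta)(\|f\|^2+\|\nabla f\|^2)$. Gronwall gives $\sup_t\|\nabla f\|^2+\int_0^T\|\sqrt{D}\Delta_x f\|^2\le C(T,\theta,\delta)$, uniformly in $\epsilon$. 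Testing the equation with $\partial_t f$, or simply reading off $\partial_t f$ from it, and using $\sqrt{\epsilon}\|\Delta f\|\le \epsilon^{1/4}(\sqrt\epsilon\|\Delta f\|^2)^{1/2}$ gives $\partial_t f$ bounded in $L^2(0,T;L^2)$, uniformly in $\epsilon$. Here we need a uniform bound on $\sqrt\epsilon\int\|\Delta f\|^2$, which the same computation provides. I expect Step 2 to be the main obstacle: one must check that every boundary term from the repeated integrations by parts vanishes, using Neumann data, $\Sigma|_{\partial\Omega}=0$, $D(0)=D(1)=0$, and the $H^3$ regularity of Remark~\ref{rem:h3}.

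\textbf{Step 3: passage to the limit and uniqueness.} Aubin--Lions gives $f_\epsilon\to f$ strongly in $C^0([0,T];L^2)$, together with weak-$*$ convergence in $L^\infty(H^1)$ and weak convergence of $\partial_t f_\epsilon$ and of $\sqrt D\Delta_x f_\epsilon$. The terms $\sqrt\epsilon\,\Delta_x f_\epsilon$ and $\sqrt\epsilon\,\partial_c^2 f_\epsilon$ tend to $0$ in $L^2$ by the bound above. The nonlinear drift converges as in the terms $I_1,\dots,I_4$ of the proof of Theorem~\ref{thm1}, using \eqref{conv12}--\eqref{conv13}-type convergences of $\Sigma_\theta^\delta$ and $\div\Sigma_\theta^\delta$. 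Hence $f$ satisfies the equation a.e. Nonnegativity and unit mass pass to the limit by a.e. convergence. Finally, \eqref{wp43s2} follows by testing the difference equation for $\tilde f=f_1-f_2$ with $\tilde f$. The drift is split as $\Sigma(f_1)\tilde f+\Sigma(\tilde f)f_2$. The first piece is handled as in Step 1. The second is bounded by $\|\tilde f\|_{L^1}\|\nabla f_2\|\|\tilde f\|$ using the $H^1$ bound on $f_2$, after writing $\div(\Sigma(\tilde f)f_2)=\div\Sigma(\tilde f)f_2+\Sigma(\tilde f)\cdot\nabla_x f_2$. Gronwall then gives \eqref{wp43s2}, and in particular uniqueness.
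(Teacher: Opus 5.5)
Your plan is, up to inessential variations, the paper's own proof (Lemma~\ref{lem5}, Lemma~\ref{lem7} and the argument after it). It has the same four ingredients. The $\epsilon$-uniform $L^2$ bound comes from integrating the drift by parts with $\Sigma_\theta^\delta|_{\partial\Omega}=0$. The $H^1$ bound comes from testing with $-\Delta f$, using \eqref{wp30cc} for the cross term and moving derivatives onto $\Sigma_\theta^\delta$ via Lemma~\ref{lem2}. Then comes compactness, and uniqueness by testing the difference equation with $\tilde f$. The variations are all harmless: you work with the continuous solutions $f_\epsilon$ rather than the scheme with $\epsilon=\Delta t$, you read $\partial_t f$ off the equation, and you get $f\ge0$ directly from a.e.\ convergence rather than from Fatou on the $\xi_\epsilon$ estimate.

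There is, however, a genuine gap exactly at the step you flagged, and the paper shares it: in deriving \eqref{wp30cc} it asserts $D'(0)=D'(1)=0$ under (A2). The second integration by parts in $c$ produces
\[
\frac12\int_\Omega\Bigl[D'(c)\,|\nabla_x f|^2\Bigr]_{c=0}^{c=1}\,d\vec{x},
\]
and $D(0)=D(1)=0$ does not remove this term.

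For $a,b\ge2$ the term vanishes. For $a=b=1$, however, $D'(0)=1$ and $D'(1)=-1$. The left-hand side of your $H^1$ inequality then acquires $-\frac12\bigl(\lVert\nabla_x f(\cdot,0)\rVert_{L^2(\Omega)}^2+\lVert\nabla_x f(\cdot,1)\rVert_{L^2(\Omega)}^2\bigr)$, with the wrong sign.

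This term cannot be absorbed uniformly in $\epsilon$. Independently of $\epsilon$, the energy identity only controls $\int D|\nabla_x\partial_c f|^2$. A weight vanishing linearly at $c=0$ does not control traces on $\{c=0\}$. Paying instead with $\sqrt\epsilon\lVert\Delta f\rVert^2$ puts $\epsilon^{-1/2}$ into the Gronwall exponent.

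Nor is the defect merely technical. Consider the drift-free analogue $\partial_t f=c(1-c)\Delta_x f$ on $\Omega=(0,\pi)$, with data $\sum_k b_k\phi(kc)\cos(kx)$, where $\phi\equiv1$ on $[0,1]$ and $\operatorname{supp}\phi\subset[0,2]$. Its solution satisfies $\lVert\partial_c f(t)\rVert^2\ge C\,t\sum_{k\ge\max(3,1/t)}k^2|b_k|^2$. This is infinite for every $t>0$ whenever $\sum_k k|b_k|^2<\infty=\sum_k k^2|b_k|^2$, e.g.\ $b_{2^j}=2^{-j}$. Such data are in $H^1$, but their trace on $\{c=0\}$ lies in $H^{1/2}(\Omega)\setminus H^1(\Omega)$.

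So your Step~2, and with it the bound $f\in L^\infty(0,T;H^1)$ in \eqref{wp41new}, is established only for $a,b\ge2$. The case $a=b=1$ of (A2) is not covered by this argument, neither yours nor the paper's.
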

In order to prove Theorem~\ref{thm6}, we derive the following lemma.
\begin{lemma}
    \label{lem5}
    The following a-priori estimates, which are uniform in the parameters $\Delta t$ and $\epsilon$, are valid for the weak solution of \eqref{wp24}-\eqref{wp25} under Assumptions \pier{$(A2)$--$(A3)$}:
    \begin{align}
    \label{wp42new}
    & \notag \displaystyle \sup_{n\in\{1,\dots,N\}}(|f^{n}|,1)\leq (\xi_{\epsilon}(f^{0}),1)+C\sqrt{\epsilon},\quad \sup_{n\in\{1,\dots,N\}}\lVert f^{n}\rVert_{H^1(\Omega \times [0,1])}^2\leq C(T,\theta,\delta), \\
    & \notag\sqrt{\epsilon}\Delta t\sum_{n=0}^{N-1}\lVert \Delta f^{n+1}\rVert^2\leq C(T,\theta,\delta), \quad \Delta t\sum_{n=0}^{N-1}\left(D(c)\Delta_x f^{n+1},\Delta_x f^{n+1}\right)\leq C(T,\theta,\delta),\\
    & \sum_{n=0}^{N-1}\lVert f^{n+1}-f^n\rVert^2\leq  C(\theta,\delta)\Delta t,
\end{align}
where the constants $C(T,\theta,\delta), C(\theta,\delta)\to +\infty$ as $\theta,\delta\to 0^+$. 
\end{lemma}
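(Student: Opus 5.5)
We will derive the estimates of Lemma~\ref{lem5} by revisiting the testing procedures in the proofs of Theorem~\ref{thm2} and Lemma~\ref{lem3}. This time we never absorb the convective term into the artificial diffusion $\sqrt{\epsilon}\lVert\nabla f^{n+1}\rVert^2$. Instead, we integrate it by parts and use that $\Sigma_\theta^\delta(f^n)$ vanishes on $\partial\Omega$ (Lemma~\ref{lem1d}(i)). Its divergence is then controlled in $L^\infty$ by $C/\min(\theta,\delta)$ (Lemma~\ref{lem2}(i)), uniformly in $\epsilon$, because $\lVert f^n\rVert_{L^1}=1$ at every step by Theorem~\ref{thm2}.

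\emph{$L^1$ estimate.} We test \eqref{wp26} with $\chi=\xi_\epsilon'(f^{n+1})$, i.e.\ we choose $\rho=\epsilon$ in the argument leading to \eqref{wp27}. There the right-hand side was $C\Delta t\,\rho/\sqrt{\epsilon}$, so one could also argue directly. Better, we write the convective term as $-\Delta t\,(\Sigma_\theta^\delta(f^n)\cdot\nabla_x\Phi_\epsilon(f^{n+1}),1)$ with $\Phi_\epsilon(s)=\int_0^s r\xi_\epsilon''(r)\,dr$. Integrating by parts (no boundary term) turns it into $\Delta t\,(\div\Sigma_\theta^\delta(f^n),\Phi_\epsilon(f^{n+1}))$. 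Since $|\Phi_\epsilon|\le C\epsilon$, summing over $n$ gives $(\xi_\epsilon(f^n),1)\le(\xi_\epsilon(f^0),1)+C(\theta,\delta)T\epsilon$. Together with $|s|\le\xi_\epsilon(s)$ this is even sharper than the stated $C\sqrt\epsilon$ bound; the $\sqrt\epsilon$ can be used to absorb the form of the constant.

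\emph{$L^2$ and $H^1$ estimates.} Testing with $f^{n+1}$, we rewrite $(\Sigma_\theta^\delta(f^n)f^{n+1},\nabla_xf^{n+1})=-\tfrac12(\div\Sigma_\theta^\delta(f^n),|f^{n+1}|^2)$. This yields $\lVert f^{n+1}\rVert^2\le \lVert f^n\rVert^2+C(\theta,\delta)\Delta t\lVert f^{n+1}\rVert^2$, and the discrete Gronwall lemma gives an $\epsilon$-independent $L^\infty(L^2)$ bound. Next we test \eqref{wp24} with $-\Delta f^{n+1}$ as in the proof of \eqref{wp33}, keeping the identity \eqref{wp30cc} under (A2), so that the $D(c)$ cross term produces $\tfrac12(D'',|\nabla_xf^{n+1}|^2)$, bounded by $C\lVert\nabla f^{n+1}\rVert^2$. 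The key step is the convective contribution $(\div(\Sigma f^{n+1}),\Delta f^{n+1})$, which previously was absorbed by $\sqrt\epsilon\lVert\Delta f^{n+1}\rVert^2$. We now integrate it by parts once more, using \eqref{wp25d}, the $H^3$ regularity of Remark~\ref{rem:h3}, and $\Sigma_\theta^\delta=0$ on $\partial\Omega$. The term $(\Sigma\cdot\nabla_x f,\Delta f)$ becomes $-\sum_k(\partial_k\Sigma\cdot\nabla_xf,\partial_kf)+\tfrac12(\div\Sigma,|\nabla f|^2)$, where $\partial_k$ ranges over $x$ and $c$ and $\partial_c\Sigma$ is bounded by Lemma~\ref{lem1d}(iii) or directly by the smooth kernel. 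The term $(f\,\div\Sigma,\Delta f)$ becomes $-(\nabla(\div\Sigma)f+\div\Sigma\,\nabla f,\nabla f)$, controlled by Lemma~\ref{lem2}(i) through $C(\theta,\delta)(\lVert f\rVert^2+\lVert\nabla f\rVert^2)$. Altogether we obtain
\[
\tfrac12\lVert\nabla f^{n+1}\rVert^2+\tfrac12\lVert\nabla(f^{n+1}-f^n)\rVert^2+\Delta t\sqrt\epsilon\lVert\Delta f^{n+1}\rVert^2+\Delta t(D\Delta_xf^{n+1},\Delta_xf^{n+1})\le\tfrac12\lVert\nabla f^n\rVert^2+C(\theta,\delta)\Delta t\lVert f^{n+1}\rVert_{H^1}^2 .
\]
Summing and applying Gronwall gives the second, third and fourth bounds of \eqref{wp42new}, provided $\Delta t$ is small depending only on $\theta,\delta$. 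Boundary terms arising from $\partial_c$ integrations at $c=0,1$ vanish because $\partial_cf=0$ there; for $x$-integrations the factor $\Sigma$ kills them. The genuine boundary term $\int_{\partial\Omega}\partial_n\nabla_x f\cdot\nabla_x f$ produced by $(D_\epsilon\Delta_x f,\Delta f)$ is handled exactly as in Lemma~\ref{lem3}.

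\emph{Increment estimate.} Testing with $f^{n+1}-f^n$ and proceeding as in \eqref{wp30d}, we bound the convective term by $\Delta t\,C(\theta,\delta)\lVert f^{n+1}\rVert_{H^1}\lVert f^{n+1}-f^n\rVert$. The diffusion terms telescope, using $\sup_n\lVert f^n\rVert_{H^1}$ and (A3), which gives the last bound of \eqref{wp42new}. The main obstacle is the $H^1$ step: we must check carefully that every boundary term from the double integration by parts of the convective term vanishes or is lower order, without recourse to $\sqrt\epsilon\lVert\Delta f\rVert^2$.
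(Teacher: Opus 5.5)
Your proposal is correct and follows the paper's proof essentially step by step. The shared steps are: the $L^2$ bound via $-\tfrac12(\div\Sigma_\theta^\delta(f^n),|f^{n+1}|^2)$ and Lemma~\ref{lem2}(i); the $H^1$, $\sqrt{\epsilon}\,\Delta$ and $\sqrt{D}\,\Delta_x$ bounds obtained by testing with $-\Delta f^{n+1}$, invoking \eqref{wp30cc} and integrating both convective terms by parts once more (using \eqref{wp25d} and $\Sigma_\theta^\delta(f^n)|_{\partial\Omega}=\mathbf{0}$); and the increment bound obtained by re-running \eqref{wp30d} with the new $\epsilon$-independent $H^1$ bound.

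The only real deviation is your $\Phi_\epsilon$ variant of the $L^1$ bound. It gives $CT\epsilon/\min(\theta,\delta)$, which is not uniform in $\theta,\delta$, so it is not ``sharper'' than the stated $C\sqrt{\epsilon}$. To get the statement as written, use the direct route you mention, namely \eqref{wp27} with $\rho=\epsilon$: it yields $C\Delta t\sqrt{\epsilon}$ per step and is exactly the paper's argument.
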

\begin{proof}
We already know from Theorem~\ref{thm2} that, for any $\theta,\delta,\epsilon>0$ and for any $n\in\{0,\dots,N-1\}$, $f^n\geq 0$ a.e. in $\Omega\times [0,1]$ and $\int_0^1\int_{\Omega}f^n\,d\vec{x}dc=1$.
As a first step, we take $\chi=\xi'_{\epsilon}(f^{n+1})$,  \pier{with the parameter $\epsilon$ in $\xi_{\epsilon}$ being the same as the coefficient $\epsilon$ in equation~\eqref{wp24} and boundary conditions~\eqref{wp25}. Then, according 
to~\eqref{wp27} where now $\rho$ is replaced by $\epsilon$, we find out that} 
\begin{align*}
    & \notag (\xi_{\epsilon}(f^{n+1}),1)\leq (\xi_{\epsilon}(f^{n}),1) + C\Delta t \sqrt{\epsilon} \,  \pier{\left(|f^n|,1\right)^2}\leq (\xi_{\epsilon}(f^{n}),1) +C\Delta t \sqrt{\epsilon}.
\end{align*}
Summing over $n\in\{0,\dots,N-1\}$, we obtain that
\begin{equation}
    \label{wp43new}
    (|f^{N}|,1)\leq (\xi_{\pier{\epsilon}}(f^{N}),1)\leq (\xi_{\epsilon}(f^{0}),1)+C\sqrt{\epsilon},
\end{equation}
from which we \pier{infer} the first estimate of \eqref{wp42new}. We continue by taking $\chi=f^{n+1}$ in \eqref{wp26}, obtaining, upon integration by parts and employment of the property $(i)$ of Lemma~\ref{lem1d}, using moreover 
the Cauchy--Schwarz inequality and property $(i)$ of Lemma~\ref{lem2}, that
\begin{align*}
    &\frac{\lVert f^{n+1}\rVert^2}{2}+\frac{\lVert f^{n+1}-f^n\Vert^2}{2}+\Delta t\sqrt{\epsilon}\lVert \nabla f^{n+1}\rVert^2\leq \frac{\lVert f^{n}\rVert^2}{2}-\Delta t \left( \Sigma_{\theta}^{\delta}(f^n)f^{n+1},\nabla_x f^{n+1}\right)\\
    & \quad = \frac{\lVert f^{n}\rVert^2}{2}+\frac{\Delta t}{2} \abramo{\int_0^1\int_{\Omega}}\div \Sigma_{\theta}^{\delta}(f^n)\left(f^{n+1}\right)^2\abramo{\,d\mathbf{x}dc}\leq \frac{\lVert f^{n}\rVert^2}{2}\\
    & \qquad +C\Delta t \lVert \div \Sigma_{\theta}^{\delta}(f^n)\rVert_{L^{\infty}(\Omega\times (0,1))}\lVert f^{n+1}\rVert^2\leq \frac{\lVert f^{n}\rVert^2}{2}+\frac{C}{\min(\theta,\delta)}\Delta t \lVert f^{n+1}\rVert^2,
\end{align*}
Summing over $n\in\{0,\dots,N-1\}$ and applying a discrete Gronwall inequality we obtain that
\begin{equation}
    \label{wp44}
    \displaystyle \sup_{n\in\{1,\dots,N\}}\frac{\lVert f^{n}\rVert^2}{2}\leq \frac{\lVert f^{0}\rVert^2}{2}\mathrm{e}^{\frac{CT}{\min(\theta,\delta)}}\leq C(T,\theta,\delta),
\end{equation}
where $C(T,\theta,\delta)\to +\infty$ as $\theta,\delta \to 0^+$.
Taking now $\chi=-\Delta f^{n+1}=-\Delta_x f^{n+1}-\frac{\partial^2 f^{n+1}}{\partial c^2}$ in \eqref{wp26},  upon integration by parts and employing the boundary conditions \eqref{wp25d}, using moreover \eqref{wp30cc}, 
we obtain that
\begin{align*}
    &\frac{\lVert \nabla f^{n+1}\rVert^2}{2}+\frac{\lVert\nabla( f^{n+1}-f^n)\Vert^2}{2}+\Delta t\sqrt{\epsilon}\lVert \Delta f^{n+1}\rVert^2+\Delta t \left(D(c)\Delta_x f^{n+1},\Delta_x f^{n+1} \right)\\
    & \qquad +\Delta t \left(D(c)\nabla_x \left(\partial_c f^{n+1}\right), \nabla_x \left(\partial_c f^{n+1}\right) \right) = \frac{\lVert \nabla f^{n}\rVert^2}{2} +\frac{\Delta t}{2}\abramo{\int_0^1\int_{\Omega}}D''(c)\left|\nabla_x f^{n+1}\right|^2\abramo{\,d\mathbf{x}dc}\\
    & \qquad -\underbrace{\Delta t\left(\div (\Sigma_{\theta}^{\delta}(f^n))f^{n+1},\Delta f^{n+1} \right)}_{I_1}-\underbrace{\Delta t\left(\Sigma_{\theta}^{\delta}(f^n)\cdot \nabla_xf^{n+1},\Delta f^{n+1} \right)}_{I_2}.
\end{align*}
After an additional integration by parts and employing the boundary conditions \eqref{wp25d} we \pier{infer} that
\[
I_1=-\Delta t \left(\nabla \left[\div (\Sigma_{\theta}^{\delta}(f^n))\right]f^{n+1},\nabla f^{n+1}\right)-\Delta t\left( \div (\Sigma_{\theta}^{\delta}(f^n))\nabla f^{n+1},\nabla f^{n+1}\right),
\]
from which, using \eqref{wp44}, property $(i)$ of Lemma~\ref{lem2}, the Cauchy--Schwarz and the Young inequalities we conclude that
\begin{align*}
&|I_1|\leq \frac{C}{\min(\theta^2,\delta^2)}\Delta t \lVert f^{n+1} \rVert^2+\frac{C}{\min(\theta^2,\delta^2)}\Delta t \lVert \nabla f^{n+1} \rVert^2+\frac{C}{\min(\theta,\delta)}\Delta t \lVert \nabla f^{n+1} \rVert^2\leq \frac{C(T,\theta,\delta)}{\min(\theta^2,\delta^2)}\Delta t\\
& \qquad +\frac{C}{\min(\theta^2,\delta^2)}\Delta t \lVert \nabla f^{n+1} \rVert^2.
\end{align*}
Similarly, upon integration by parts and employment of the boundary conditions \eqref{wp25d} and of the property $(i)$ of Lemma~\ref{lem1d}, we write
\begin{align}
\label{i2}
& \notag I_2=-\Delta t\left(\nabla [\Sigma_{\theta}^{\delta}(f^n)] \nabla_xf^{n+1},\nabla f^{n+1} \right)-\Delta t\abramo{\int_0^1\int_{\Omega}}\Sigma_{\theta}^{\delta}(f^n)\cdot \nabla_x\frac{\left|\nabla f^{n+1}\right|^2}{2}\abramo{\,d\mathbf{x}dc}\\
& \quad =-\Delta t\left(\nabla [\Sigma_{\theta}^{\delta}(f^n)] \nabla_xf^{n+1},\nabla f^{n+1} \right)+\Delta t\abramo{\int_0^1\int_{\Omega}}\div \Sigma_{\theta}^{\delta}(f^n)\frac{\left|\nabla f^{n+1}\right|^2}{2}\abramo{\,d\mathbf{x}dc}.
\end{align}
Hence, employing the property $(i)$ of Lemma~\ref{lem2}, the Cauchy--Schwarz and the Young inequalities we \pier{deduce} that
\[
|I_2|\leq \frac{C}{\min(\theta,\delta)}\Delta t \lVert \nabla f^{n+1} \rVert^2.
\]
Collecting all the previous results we conclude that
\begin{align*}
    &\frac{\lVert \nabla f^{n+1}\rVert^2}{2}+\frac{\lVert\nabla( f^{n+1}-f^n)\Vert^2}{2}+\Delta t\sqrt{\epsilon}\lVert \Delta f^{n+1}\rVert^2+\Delta t \left(D(c)\Delta_x f^{n+1},\Delta_x f^{n+1} \right)\\
    & \qquad +\Delta t \left(D(c)\nabla_x \left(\partial_c f^{n+1}\right), \nabla_x \left(\partial_c f^{n+1}\right) \right) \leq \frac{\lVert \nabla f^{n}\rVert^2}{2} + \frac{C(T,\theta,\delta)}{\min(\theta^2,\delta^2)}\Delta t+\frac{C}{\min(\theta^2,\delta^2)}\Delta t\lVert \nabla f^{n+1}\rVert^2,
\end{align*}
from which, summing over $n\in\{0,\dots,N-1\}$ and applying a discrete Gronwall inequality we obtain the second, third and fourth inequalities in \eqref{wp42new}. The fifth inequality in \eqref{wp42new} can be obtained starting from \eqref{wp30d}, summing over $n\in\{0,\dots,N-1\}$ and employing the second property in \eqref{wp42new}.
\end{proof}
The estimates obtained in Lemma~\ref{lem5} lead to the convergence results for the time interpolants \eqref{wp34} which let us identify the limit of $\mathcal{P}_n^{\theta,\delta,\epsilon}$ as $\Delta t,\epsilon \to 0$. In particular, we consider $\epsilon\equiv \Delta t$ and reduce the limit problem to the case $\Delta t\to 0$ only, obtaining an existence result for problem $\mathcal{P}^{\theta,\delta}$. \abramo{We obtain the following convergence results.}
\begin{lemma}
    \label{lem7}
    Under the Assumptions \pier{$(A2)$--$(A3)$} of Theorem~\ref{thm1} there exists a limit function $f$, with regularity
    \[
    f\in L^{\infty}(0,T;H^1(\Omega \times [0,1]))\cap  H^1\bigl(0,T;L^2(\Omega \times (0,1))\bigr), \; \sqrt{D(c)}\Delta_xf\in L^{2}(0,T;L^2(\Omega \times (0,1)))
    \]
    such that, up to subsequences of the solution of $\mathcal{P}_n^{\theta,\delta,\Delta t}$, which we still label by the index $\Delta t$, the following convergence results hold as $\Delta t \to 0$:
    \begin{align}
    \label{conv7t} & \hat{f}_{\Delta t} \overset{\ast}{\rightharpoonup} f \quad \text{in} \quad \pier{L^{\infty}\bigl(0,T;H^1(\Omega \times [0,1])\bigr)},\\
    \label{conv8t} & \partial_t\hat{f}_{\Delta t} {\rightharpoonup} \partial_tf \quad \text{in} \quad L^{2}\bigl(0,T;L^2(\Omega \times (0,1))\bigr),\\
    \label{conv9t} & \hat{f}_{\Delta t} {\rightarrow} f \quad \text{in} \quad C^{0}\left(0,T;(L^r(\Omega \times (0,1))\right)\;\; \text{and} \;\; \text{a.e. in} \; \; \Omega\times (0,1)\times (0,T),\\
    \label{conv10t} & \bar{f}_{\Delta t},\ubar{f}_{\Delta t} \overset{\ast}{\rightharpoonup} f \quad \text{in} \quad \pier{L^{\infty}\bigl(0,T;H^1(\Omega \times [0,1])\bigr)},\\
    \label{conv10bt} & \sqrt{D(c)}\Delta_x\bar{f}_{\Delta t}{\rightharpoonup} \sqrt{D(c)}\Delta_xf \quad \text{in} \quad  L^{2}\left(0,T;L^2({\Omega} \times (0,1))\right),\\
    \label{conv10ct} & \abramo{\sqrt[4]{\epsilon}}\Delta\bar{f}_{\Delta t}{\rightharpoonup} 0 \quad \text{in} \quad  L^{2}\left(0,T;L^2({\Omega} \times (0,1))\right),\\
    \label{conv11t} & \bar{f}_{\Delta t},\ubar{f}_{\Delta t} {\rightarrow} f \quad \text{in} \quad L^{\infty}\left(0,T;L^r(\Omega \times (0,1))\right)\;\; \text{and} \;\; \text{a.e. in} \; \; \Omega\times (0,1)\times (0,T),\\
    \label{conv12t} & \Sigma_{\theta}^{\delta}\left(\ubar{f}_{\Delta t}\right) {\rightarrow} \Sigma_{\theta}^{\delta}\left(f\right) \quad \text{in} \quad L^{\infty}\left(\Omega \times (0,1)\times (0,T);\mathbb{R}^d\right),\\
    \label{conv13t} & \div \Sigma_{\theta}^{\delta}\left(\ubar{f}_{\Delta t}\right) {\rightarrow} \div \Sigma_{\theta}^{\delta}\left(f\right) \quad \text{in} \quad L^{\infty}\left(0,T;L^r(\Omega \times (0,1))\right),
    \end{align}
    where $r\geq 1$ for $d=2$ and $r\in [1,6)$ for $d=3$.
\end{lemma}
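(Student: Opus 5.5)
The plan is to follow the scheme of the proof of Lemma~\ref{lem4} (second part), replacing the estimates of Lemma~\ref{lem3} by the $\epsilon$-uniform estimates of Lemma~\ref{lem5}, with $\epsilon=\Delta t$. First, I will translate \eqref{wp42new} into bounds on the interpolants \eqref{wp34}. The second estimate gives $\bar f_{\Delta t},\ubar f_{\Delta t},\hat f_{\Delta t}$ bounded in $L^\infty(0,T;H^1(\Omega\times[0,1]))$; for $\hat f_{\Delta t}$ this uses that it is a convex combination of $f^n,f^{n+1}$, and $f^0=f_0\in H^1$ by (A3). The fifth estimate gives
\[
\int_0^T\lVert\partial_t\hat f_{\Delta t}\rVert^2dt=\frac{1}{\Delta t}\sum_{n}\lVert f^{n+1}-f^n\rVert^2\le C(\theta,\delta).
\]
The fourth gives $\sqrt{D(c)}\Delta_x\bar f_{\Delta t}$ bounded in $L^2(0,T;L^2)$. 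The third gives $\epsilon^{1/4}\Delta\bar f_{\Delta t}$ bounded in $L^2(0,T;L^2)$. Banach--Alaoglu then yields \eqref{conv7t}, \eqref{conv8t}, \eqref{conv10t}, together with weak limits $\eta$ of $\sqrt{D}\Delta_x\bar f_{\Delta t}$ and $\zeta$ of $\epsilon^{1/4}\Delta\bar f_{\Delta t}$.

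Next, I will apply the Aubin--Lions--Simon theorem. Since $H^1(\Omega\times[0,1])\subset\subset L^r$ for $r<6$ (any $r$ for $d\le2$), it gives \eqref{conv9t} in $C^0([0,T];L^r)$ and, along a subsequence, a.e. convergence. The differences $\hat f_{\Delta t}-\bar f_{\Delta t}$ and $\hat f_{\Delta t}-\ubar f_{\Delta t}$ are bounded in $L^\infty(0,T;L^2)$ by $\sup_n\lVert f^{n+1}-f^n\rVert\le C\sqrt{\Delta t}$, so all three interpolants share the limit. Combining this with the uniform $L^\infty(0,T;L^6)$ bound and interpolation gives \eqref{conv11t}. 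Then \eqref{conv12t} follows from linearity of $\Sigma_\theta^\delta$ and Lemma~\ref{lem2}(i):
\[
\lVert\Sigma_\theta^\delta(\ubar f_{\Delta t}-f)\rVert_{L^\infty}\le\Delta_1\lVert\ubar f_{\Delta t}-f\rVert_{L^\infty(0,T;L^1)}.
\]
The same bound for $\div\Sigma_\theta^\delta$, at the cost of $C/\min(\theta,\delta)$, gives \eqref{conv13t}. These constants are fixed, since $\theta,\delta$ are fixed here.

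The main point is identifying the weak limits. For $\zeta$: for any test $g\in L^2(0,T;H^2)$ satisfying the Neumann conditions \eqref{wp25d}, integration by parts gives
\[
\int_0^T(\epsilon^{1/4}\Delta\bar f_{\Delta t},g)\,dt=\epsilon^{1/4}\int_0^T(\bar f_{\Delta t},\Delta g)\,dt\to0,
\]
because $\bar f_{\Delta t}$ is bounded in $L^2$ and $\epsilon^{1/4}\to0$. Such $g$ are dense in $L^2(0,T;L^2)$, so $\zeta=0$, which is \eqref{conv10ct}; in fact the bound $\epsilon^{1/4}\lVert\bar f_{\Delta t}\rVert\to0$ already gives strong convergence to $0$. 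For $\eta$: for smooth $g$ compactly supported in $\Omega\times(0,1)\times(0,T)$, with $\sqrt{D}$ bounded and smooth on the support,
\[
(\sqrt{D}\Delta_x\bar f_{\Delta t},g)=(\bar f_{\Delta t},\Delta_x(\sqrt{D}g))\to(f,\Delta_x(\sqrt{D}g)),
\]
so $\eta=\sqrt{D}\Delta_x f$ in the distributional sense. The delicate step is that $\sqrt{D}$ is only H\"older continuous at $c=0,1$. This is harmless here, because the test functions vanish near $c\in\{0,1\}$ and $D$ is smooth on $(0,1)$. Hence $\sqrt{D}\Delta_x f\in L^2$ by lower semicontinuity of the norm, which is exactly the stated regularity of $f$, and \eqref{conv10bt} holds for the whole subsequence.
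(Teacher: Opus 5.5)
Your argument is the paper's. The paper only says that \eqref{conv7t}--\eqref{conv13t} follow from the $\epsilon$-uniform bounds of Lemma~\ref{lem5} exactly as in Lemma~\ref{lem4}, and that \eqref{conv10bt}, \eqref{conv10ct} follow from the third and fourth estimates in \eqref{wp42new}. Your identification of the two weak limits, by integrating by parts against test functions, is correct and supplies details the paper omits.

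Two points need fixing.

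First, the compact embedding you quote is off by one dimension; the paper's statement and Lemma~\ref{lem4} contain the same slip. The unknowns live on $\Omega\times(0,1)\subset\mathbb{R}^{d+1}$. Hence $H^1(\Omega\times(0,1))\subset\subset L^r$ holds only for $r<6$ when $d=2$ and $r<4$ when $d=3$; all finite $r$ are allowed only when $d=1$. Also, for $d=3$ there is no uniform $L^\infty(0,T;L^6)$ bound to interpolate with. The failure is real: $\hat f_{\Delta t}(0)=\ubar f_{\Delta t}(0)=f_0$ is an arbitrary nonnegative $H^1$ datum, so \eqref{conv9t} and \eqref{conv11t} fail beyond these exponents. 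What your argument actually proves is the lemma with this reduced range of $r$. That range is enough for the limit passage in Theorem~\ref{thm6}, since your $L^1$-based argument gives \eqref{conv12t}--\eqref{conv13t} uniformly anyway.

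Second, your aside that $\epsilon^{1/4}\lVert\bar f_{\Delta t}\rVert\to 0$ already gives strong convergence of $\epsilon^{1/4}\Delta\bar f_{\Delta t}$ to $0$ holds only in a dual norm, namely against test functions in $L^2(0,T;H^2)$ with zero normal derivative. In $L^2(0,T;L^2(\Omega\times(0,1)))$ the third estimate in \eqref{wp42new} gives only boundedness, hence only the weak convergence stated in \eqref{conv10ct}.

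Your worry about the H\"older regularity of $\sqrt{D}$ is unnecessary. Since $\sqrt{D(c)}$ does not depend on $\mathbf{x}$ and you integrate by parts only in $\mathbf{x}$, test functions need not vanish near $c\in\{0,1\}$.
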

\abramo{The convergence results \eqref{conv7t}-\eqref{conv10t} and \eqref{conv11t}-\eqref{conv13t} in Lemma \ref{lem7} follow directly from the estimates of Lemma~\ref{lem5} in the same way as \eqref{conv7}--\eqref{conv13} follow from the estimates in Lemma~\ref{lem3}, \abramo{with the difference that they are now $\epsilon$-independent}, so for the sake of brevity we state them without \pier{giving an explicit proof}. The only new convergence results with respect to the analogous results reported in \eqref{conv7}--\eqref{conv13} are \eqref{conv10bt} and \eqref{conv10ct}, which are direct consequences of the third and fourth estimates in \eqref{wp42new}.}

We are now ready to prove that the limit function $f$ obtained in Lemma~\ref{lem7} is the unique strong solution, under Assumptions \pier{$(A2)$--$(A3)$}, of \eqref{wp19}-\eqref{wp21} (with $\epsilon=0$), as defined in Theorem~\ref{thm6}. 
\begin{proof}[\pier{Proof of Theorem~\ref{thm6}}]
Starting from \eqref{wp39} and using the convergence properties of Lemma~\ref{lem7}, \pier{it is} easy to obtain that
\begin{align*}
    & \int_0^T\left(\partial_t \hat{f}_{\Delta t},g\right)\abramo{\,dt} \rightarrow \int_0^T\left(\partial_t f,g\right)\abramo{\,dt}, \quad \int_0^T(D(c)\Delta_x\bar{f}_{\Delta t}, g)\abramo{\,dt}\rightarrow \int_0^T(D(c)\Delta_xf, g)\abramo{\,dt},\\
    & \int_0^T\left(\div \left(\Sigma_{\theta}^{\delta}(\ubar{f}_{\Delta t})\bar{f}_{\Delta t}\right), g\right)\abramo{\,dt}\rightarrow \int_0^T\left(\div \left(\Sigma_{\theta}^{\delta}(f)f\right), g\right)\abramo{\,dt},\quad \sqrt{\epsilon}\int_0^T\left(\Delta \bar{f}_{\Delta t},g\right)\abramo{\,dt}\rightarrow 0,
\end{align*}
as $\Delta t \to 0$, where for calculating the first limit in the second row we use the same arguments as in the proof of Theorem~\ref{thm1}. Hence, we have proved that the limit point $f$ satisfies \eqref{wp19}-\eqref{wp21}, with $\epsilon=0$, a.e. in $\Omega\times [0,1]\times [0,T]$, i.e.
\begin{equation}
    \label{wp19t}
    \partial_tf-\div \left(\Sigma_{\theta}^{\delta}(f)f\right)-D(c)\Delta_xf=0
\end{equation}
is valid a.e. in $\Omega\times [0,1]\times [0,T]$, with $f$ enjoying the regularity \eqref{wp41new} and with \eqref{wp20} (with $\epsilon=0$) valid in the sense of traces and a.e. and $f(\vec{x},c,0)=f_0(\vec{x},c)$ a.e. in $\Omega \times [0,1]$. Thanks to Fatou's lemma applied to the first estimate in \eqref{wp42new}, passing to the limit as $\epsilon\equiv \Delta t\to 0$ we have that
\begin{equation}
\label{wp19t2}
\int_0^1\int_{\Omega}|f|\abramo{\,d\mathbf{x}dc}\leq \int_0^1\int_{\Omega}f_0\abramo{\,d\mathbf{x}dc}=1.
\end{equation}
Moreover, taking the integral of \eqref{wp19t} over $\Omega\times [0,1]$ and using the boundary conditions, we have that $\int_0^1\int_{\Omega}f=1$. Hence, we conclude as before that $f\geq 0$ a.e. in $\Omega\times [0,1]$.

For what concerns the continuous dependence from data \eqref{wp43s2}, let us assume that $f_1^0,f_2^0$ $\in H^{1}(\Omega\times [0,1])$ be two different initial data, and let $f_1,f_2$ be the two corresponding strong solutions. We define $\tilde{f}:=f_1-f_2$. 
Taking the difference between \eqref{wp19t} for $f_1$ and $f_2$, multiplying the resulting equation by $ \tilde{f}$ and integrating over $\Omega \times [0,s]$, for $s\in (0,T]$, we obtain that
\begin{align*}
   & \frac{\lVert \tilde{f}\rVert^{\pier{2}}(s)}{2}+\int_0^s\abramo{\int_0^1\int_{\Omega}}D(c)|\nabla_x \tilde{f}|^2\abramo{\,d\mathbf{x}dc}
   = \frac{\lVert \tilde{f}\rVert^{\pier{2}}(0)}{2}+\int_0^s\abramo{\int_0^1\int_{\Omega}}\div \Sigma_{\theta}^{\delta}(f_1) \,|\tilde{f}|^2\abramo{\,d\mathbf{x}dcdt}\\
   & \qquad +\frac{1}{2}\int_0^s\abramo{\int_0^1\int_{\Omega}}\Sigma_{\theta}^{\delta}(f_1) \cdot\nabla_x \left(\tilde{f}^2\right)\abramo{\,d\mathbf{x}dcdt}+\int_0^s\left(\div \Sigma_{\theta}^{\delta}(\tilde{f}) f_2,\tilde{f}\right)\abramo{\,dt}+\int_0^s\left(\Sigma_{\theta}^{\delta}(\tilde{f})\cdot \nabla_x f_2,\tilde{f}\right)\abramo{\,dt}.
\end{align*}
Using the Cauchy--Schwarz and Young inequalities, the property $(i)$ 
of Lemma~\ref{lem2}, \eqref{wp41new} and integration by parts, we easily \pier{infer} that
\begin{align*}
   & \frac{\lVert \tilde{f}\rVert^{\pier{2}}(s)}{2}+\int_0^s\left(D(c),|\nabla_x \tilde{f}|^2\right)\abramo{\,dt}
   \leq \frac{\lVert \tilde{f}\rVert^{\pier{2}}(0)}{2}+\frac{C}{\min(\theta,\delta)}\int_0^s\lVert \tilde{f}\rVert^2\abramo{\,dt}.
\end{align*}
An application of the Gronwall inequality to the previous estimate leads to \eqref{wp43s2}. We observe that \pier{it is} not possible to obtain a strong continuous dependence from data like in \pier{\eqref{wp23s}
,} since we cannot take the $L^2$ product of the difference between \eqref{wp19t} for $f_1$ and $f_2$ and $-\Delta \tilde{f}$, due to the fact that $\Delta \tilde{f}\notin L^2(\Omega \times (0,1))$. 
\end{proof}

As for the limit case $\theta,\delta\to 0$, under Assumption $(A1bis)$ \pier{it is} possible to enhance the regularity of the solution by getting uniform in $\epsilon$ a-priori $L^p$ estimates to obtain the following regularity result. 
\begin{theorem}
    \label{thm7}
    \betti{Let $\theta,\delta>0$ and Assumption $(A1bis)$ hold true}. Then, the strong solution of Theorem~\ref{thm6} enjoys the further regularity
    \begin{equation}
        \label{wp45}
    f\in L^{\infty}(0,T;L^{p}(\Omega\times (0,1))),
    \end{equation}
    for any $p\in [1,+\infty)$.
\end{theorem}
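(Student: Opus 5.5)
We will derive $L^p$ bounds for the time-discrete solutions of $\mathcal{P}_n^{\theta,\delta,\epsilon}$ that are uniform in $\Delta t$ and $\epsilon$, using the divergence of the drift (property $(i)$ of Lemma~\ref{lem2}) instead of the degenerate diffusion. We then pass to the limit $\epsilon\equiv\Delta t\to 0$ using Lemma~\ref{lem7} and Fatou's lemma. The point is that in the Moser--Alikakos argument of Theorem~\ref{thm3} the drift was absorbed into the $\sqrt{\epsilon}$-diffusion, which produced the constants $C(\epsilon)$. Here, with $\theta,\delta>0$ fixed, we integrate the drift by parts instead, as in the proof of \eqref{wp44}, which is uniform in $\epsilon$.

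\textbf{Step 1 (discrete $L^p$ test).} Fix $p\in(2,\infty)$ (the cases $p\le 2$ follow from \eqref{wp41new}). Test \eqref{wp26} with $\chi=(f^{n+1})^{p-1}$. This is admissible because $f^{n+1}\in H^3(\bar\Omega\times[0,1])\subset C^0$ and $f^{n+1}\ge 0$ (Theorem~\ref{thm2}, Remark~\ref{rem:h3}). Convexity of $s\mapsto s^p$ gives
\[
\bigl((f^{n+1})^{p-1},f^{n+1}-f^n\bigr)\ge \tfrac1p\bigl[((f^{n+1})^p,1)-((f^n)^p,1)\bigr].
\]
The diffusion terms have the good sign: each equals $(p-1)\int (\cdot)\,(f^{n+1})^{p-2}|\nabla_x f^{n+1}|^2$ or its $c$-analogue, with nonnegative coefficients $D_\epsilon$ and $\sqrt\epsilon$, so we drop them.

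\textbf{Step 2 (drift term).} Using $\Sigma_\theta^\delta|_{\partial\Omega}=0$ (Lemma~\ref{lem1d}$(i)$), the drift term becomes
\[
\bigl(\Sigma_\theta^\delta(f^n)f^{n+1},(p-1)(f^{n+1})^{p-2}\nabla_x f^{n+1}\bigr)=\tfrac{p-1}{p}\bigl(\Sigma_\theta^\delta(f^n),\nabla_x (f^{n+1})^p\bigr)=-\tfrac{p-1}{p}\bigl(\div\Sigma_\theta^\delta(f^n),(f^{n+1})^p\bigr).
\]
Since $\|f^n\|_{L^1}=1$, Lemma~\ref{lem2}$(i)$ bounds this by $\frac{C(p-1)}{p\min(\theta,\delta)}((f^{n+1})^p,1)$. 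Hence
\[
((f^{n+1})^p,1)\le ((f^n)^p,1)+\frac{C(p-1)\Delta t}{\min(\theta,\delta)}((f^{n+1})^p,1).
\]
A discrete Gronwall argument, valid for $\Delta t$ small depending on $p,\theta,\delta$, gives
\[
\sup_n((f^n)^p,1)\le (f_0^p,1)\,e^{C(p-1)T/\min(\theta,\delta)}\le |\Omega|\,\|f_0\|_{L^\infty}^p e^{C(p-1)T/\min(\theta,\delta)}.
\]
Here $f^0=f_0$ under (A1bis)+(A3), as in the setting of Theorem~\ref{thm6}, which needs $f_0\in H^1$ as well. If only (A1bis) is available, we would use the regularized $\tilde f_0$, for which $\|\tilde f_0\|_{L^\infty}\le\|f_0\|_{L^\infty}$ by the maximum principle for the resolvent. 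The bound is uniform in $\epsilon$ and $\Delta t$. It grows like $e^{Cp}$, so taking $p$-th roots does not give $L^\infty$, consistent with the statement holding only for finite $p$.

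\textbf{Step 3 (limit).} With $\epsilon\equiv\Delta t$, \eqref{conv11t} gives $\bar f_{\Delta t}\to f$ a.e. Fatou's lemma then yields $\|f(t)\|_{L^p}\le C(p,T,\theta,\delta)$ for a.e. $t$, which is \eqref{wp45}.

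The main obstacle is justifying Step 1 without truncations and keeping every constant independent of $\epsilon$. The regularity of the discrete solutions handles the first issue; the second is the reason the drift must be handled via $\div\Sigma_\theta^\delta$ rather than Young's inequality against the diffusion.
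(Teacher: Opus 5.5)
Your proof is correct, and its core is the same computation as the paper's. You test with $pf^{p-1}$, use $\Sigma_\theta^\delta|_{\partial\Omega}=\mathbf{0}$ to move the drift onto $\div\Sigma_\theta^\delta$, bound that by Lemma~\ref{lem2}$(i)$ with $\lVert f\rVert_{L^1}=1$, and close with Gronwall.

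What differs is where the computation is done. The paper multiplies the limit equation \eqref{wp19t} directly by $pf^{p-1}$ and integrates by parts. For the strong solution of Theorem~\ref{thm6} this is only formal. Indeed, $f\in L^\infty(0,T;H^1(\Omega\times(0,1)))$ does not make $f^{p-1}$ an admissible multiplier for large $p$. Moreover, only $\sqrt{D(c)}\Delta_x f\in L^2$ is available for integrating the diffusion term by parts.

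You instead work on the time-discrete problem with $\epsilon\equiv\Delta t$. There $f^{n+1}\ge0$ is continuous and in $H^3$ (Theorem~\ref{thm2}, Remark~\ref{rem:h3}), so $(f^{n+1})^{p-1}$ is a legitimate test function in \eqref{wp26}. Convexity of $s\mapsto s^p$ replaces the chain rule in time. Both diffusion terms are discarded by sign, so nothing depends on $\epsilon$. The bound then passes to $f$ through \eqref{conv11t} and Fatou's lemma. The only cost is the smallness condition $(p-1)C\Delta t<\min(\theta,\delta)$ in the discrete Gronwall step, which is harmless because $p$ is fixed before $\Delta t\to0$. So your route is a rigorous version of the paper's formal estimate at essentially no extra expense.

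Your closing remark, however, is wrong. The $p$-th root of $|\Omega|\,\lVert f_0\rVert_{L^\infty}^p e^{C(p-1)T/\min(\theta,\delta)}$ is $|\Omega|^{1/p}\lVert f_0\rVert_{L^\infty}e^{C(p-1)T/(p\min(\theta,\delta))}$, which stays bounded as $p\to\infty$.

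First let $\Delta t\to0$ at fixed $p$; the factor $(1-(p-1)C\Delta t/\min(\theta,\delta))^{-N}$ tends to $e^{C(p-1)T/\min(\theta,\delta)}$. This gives $\lVert f(t)\rVert_{L^p}\le\lVert f_0\rVert_{L^p}e^{CT/\min(\theta,\delta)}$ for every $p$ and a.e.\ $t$. Then letting $p\to\infty$ along the integers gives $\lVert f\rVert_{L^\infty(0,T;L^\infty(\Omega\times(0,1)))}\le\lVert f_0\rVert_{L^\infty}e^{CT/\min(\theta,\delta)}$. This is just the maximum principle for a drift with bounded divergence. The bound degenerates as $\theta,\delta\to0$ but is finite for fixed parameters.

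The paper's own estimate has the same property, so Remark~\ref{mosal} is more pessimistic than necessary. None of this affects the statement you were asked to prove.
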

\begin{proof}
Let us multiply \eqref{wp19t} by $pf^{p-1}$ and integrate over $\Omega \times [0,1]$. With simple calculations, taking integration by parts and employing the boundary conditions \eqref{wp20} (with $\epsilon=0$) and the 
property $(i)$ of Lemma~\ref{lem1d}, using moreover the property $(i)$ of 
Lemma~\ref{lem2}, we obtain that
\begin{align*}
    & p\left(\partial_t f,f^{p-1}\right)-p\left(D(c)\Delta_xf,f^{p-1}\right)=\frac{d}{dt}\left(f^p,1\right)+4\frac{(p-1)}{p}\left(D(c)\nabla_x \left(f^{p/2}\right),\nabla_x \left(f^{p/2}\right)\right)\\
    & \quad =p\left(\div\left(\Sigma_{\theta}^{\delta}(f)\right),f^p\right)+\left(\Sigma_{\theta}^{\delta}(f),\nabla_x\left(f^p\right)\right)= p\left(\div\left(\Sigma_{\theta}^{\delta}(f)\right),f^p\right)-\left(\div \Sigma_{\theta}^{\delta}(f),f^p\right)\\
    & \quad \leq \frac{C}{\min(\theta,\delta)}p\left(f^p,1\right).
\end{align*}
Integrating in time over the interval $[0,T]$ and using a Gronwall argument we obtain \eqref{wp45}.
\end{proof}
\begin{remark}
    \label{mosal}
    We observe that the Moser--Alikakos iteration argument is not working for problem $\mathcal{P}^{\theta,\delta,0}$, due to the lack of regularity of $\nabla f$ and $\nabla (f^{p/2})$, hence we are not able to prove an $L^{\infty}(0,T;L^{\infty}(\Omega\times (0,1)))$ bound for the solution of $\mathcal{P}^{\theta,\delta,0}$, which was instead possible for problem $\mathcal{P}^{0,0,\epsilon}$. 
\end{remark}
\subsection{Limit problem $\mathcal{P}^{\theta,\delta,\epsilon}\to \mathcal{P}$ as $\epsilon,\theta,\delta\to 0$}
\pier{Finally, we study} the limit problem as $\epsilon,\theta,\delta \to 0$, giving an existence result for the case without any regularization in the equation. We observe that all estimates but the first in \eqref{wp42new} depend on inverse powers of the regularization parameters. The first estimate in \eqref{wp42new} implies that the strong solution of Theorem~\ref{thm6} is uniformly in $\epsilon$, $\theta$ and $\delta$ bounded in the space $L^{\infty}(0,T;L^1(\Omega\times (0,1)))$, with $f\geq 0$ a.e. in $\Omega\times (0,1)$. \pier{It is} well known that, since $L^1(\Omega\times (0,1))$ is neither reflexive nor the dual of a Banach space, uniform boundedness of sequences in $L^1(\Omega\times (0,1))$ does not imply their weak relative compactness. Hence, we need to obtain further a-priori estimates which are uniform in $\epsilon$, $\theta$ and $\delta$ to study the limit as $\epsilon,\theta,\delta \to 0$.


\pier{Setting $\theta \equiv \delta$, we first perform the limit passage $\mathcal{P}^{\theta,\theta,\epsilon}\to \mathcal{P}^{\theta,\theta}$ as $\epsilon\to 0$, which has already been analyzed in the previous section. By Theorem~\ref{thm6}, there exists a unique strong solution, denoted by $f_{\theta}$ to emphasize its dependence on the parameter $\theta$, with the regularity stated in \eqref{wp41new}. Moreover, $f_{\theta}$ satisfies \eqref{wp19}--\eqref{wp21} (with $\epsilon=0$) almost everywhere; that is,}
\begin{equation}
    \label{wp190}
    \partial_tf_{\theta}-\div \left(\Sigma_{\theta}^{\delta}(f_{\theta})f_{\theta}\right)-D(c)\Delta_xf_{\theta}=0 \quad \text{a.e. in} \;\; \Omega\times [0,1]\times[0,T],
\end{equation}
with homogeneous Neumann boundary conditions
\begin{equation}
    \label{wp200}
    \Sigma_{\theta}^{\delta}(f_{\theta})f_{\theta}\cdot \vec{n}+D(c)\nabla_xf_{\theta}\cdot \vec{n}=0\quad \text{a.e. on} \; \partial \Omega\times [0,1]\times[0,T], 
\end{equation}
and with the initial condition
\begin{equation}
    \label{wp210}
    f_{\theta}(\cdot,\cdot,0)=f_0(\cdot,\cdot)\quad \text{a.e. in}\;\;\Omega\times(0,1).
\end{equation}
\pier{As a second step, we pass to the limit as $\theta \to 0$ in the weak formulation of \eqref{wp190}, exploiting the convergence results established in Lemma~\ref{lem6} below. These results are derived under the following assumption on the initial condition:}
\begin{itemize}
    \item[(A3bis)] $f_0\in H^1(\Omega\times [0,1])\cap L^{\infty}(\Omega\times (0,1))$, with $f_0\geq 0$ a.e. in $\Omega \times [0,1]$ and $\int_0^1\int_{\Omega}f_0d\vec{x}dc=1$.
\end{itemize}
\begin{lemma}
    \label{lem6}
    Under the Assumptions $(A2)-(A3bis)$ there exists a limit function $f$, with regularity
    \[
    f\in L^{\infty}\bigl(0,T;L^1(\Omega \times (0,1))\bigr),
    \]
    such that, up to subsequences of the solution \pier{to} $\mathcal{P}^{\theta,\theta}$, which we still label by the index $\theta$, the following strong convergence results hold as $\theta \to 0$: 
    \begin{align}
    \label{conv14} & {f}_{\theta} {\rightarrow} f \quad \text{in} \quad \abramo{L^2}\left(0,T;(L^r(\Omega \times (0,1)_{\text{loc}})\right)\;\; \text{and} \;\; \text{a.e. in} \; \; \Omega\times (0,1)\times (0,T),\\
    \label{conv15} & {f}_{\theta} {\rightarrow} f \quad \text{in} \quad \abramo{L^{2}}\left(0,T;\pier{L^1(\Omega \times (0,1))}\right),
    \end{align}
    where $r\geq 1$ for $d=2$ and $r\in [1,6)$ for $d=3$.
\end{lemma}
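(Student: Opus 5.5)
The plan is to derive a-priori estimates for $f_{\theta}$ that are uniform in $\theta$ but only local in the feature variable $c$, and then use Aubin--Lions on $\Omega\times K$ for each compact $K\subset(0,1)$. The equation \eqref{wp190} contains no derivative in $c$ and its diffusion coefficient $D(c)$ is bounded below on $K$. So multiplying by a cutoff $\varphi(c)$, with $\varphi\in C_c^\infty(0,1)$ and $0\le\varphi\le 1$, produces no commutator terms from the equation. The only $\theta$-uniform information we start from is the first estimate in \eqref{wp42new}, namely $f_\theta\ge 0$ and $\|f_\theta(t)\|_{L^1}=1$. Through Lemma~\ref{lem1d}(ii) this gives $|\Sigma_\theta^\theta(f_\theta)|\le\Delta_1$ uniformly. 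We avoid $\div\Sigma_\theta^\theta$ entirely, since its bound is $C/\theta$.

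\emph{Step 1 (slice-mass conservation).} Integrate \eqref{wp190} over $\Omega$ only, for fixed $c$, and use the boundary condition \eqref{wp200}. This gives $\frac{d}{dt}\int_\Omega f_\theta(\vec{x},c,t)\,d\vec{x}=0$, so
\[
\int_\Omega f_\theta(\vec{x},c,t)\,d\vec{x}=\int_\Omega f_0(\vec{x},c)\,d\vec{x}\le |\Omega|\,\lVert f_0\rVert_{L^\infty(\Omega\times(0,1))}
\]
for a.e. $c$ and all $t$. This is where $(A3bis)$ enters. By the analogue of Lemma~\ref{lem1}(iii) for $\Sigma_\theta^\theta$, which follows by the same computation since the regularized kernel $\chi_{\theta,\Delta_2}$ has $c$-derivative of unit total mass, we get $\|\partial_c\Sigma_\theta^\theta(f_\theta)\|_{L^\infty}\le C$ uniformly in $\theta$.

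\emph{Step 2 (local $L^2$ and $H^1$ bounds).} Test \eqref{wp190} with $\varphi^2(c) f_\theta$. Using $|\Sigma_\theta^\theta|\le\Delta_1$ and Young's inequality, the drift term is bounded by $\tfrac12\int\varphi^2 D|\nabla_x f_\theta|^2+C\int\varphi^2 f_\theta^2/D$. Since $D\ge d_K>0$ on $\operatorname{supp}\varphi$, Gronwall gives bounds on $\varphi f_\theta$ in $L^\infty(0,T;L^2)$ and on $\varphi\nabla_x f_\theta$ in $L^2(0,T;L^2)$. Next, differentiate \eqref{wp190} in $c$, which is rigorous at the level of the $H^1$ regularity \eqref{wp41new} or via difference quotients in $c$, and test with $\varphi^2\partial_c f_\theta$. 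After integration by parts in $\vec{x}$, using $\Sigma_\theta^\theta|_{\partial\Omega}=0$, the drift contributes $(\partial_c\Sigma_\theta^\theta f_\theta+\Sigma_\theta^\theta\partial_c f_\theta,\varphi^2\nabla_x\partial_c f_\theta)$, and the diffusion contributes $(D'\nabla_x f_\theta,\varphi^2\nabla_x\partial_c f_\theta)$. Both are absorbed into $\int\varphi^2D|\nabla_x\partial_c f_\theta|^2$, at the cost of terms controlled by Step 1 and by the first part of this step. The initial datum contributes $\|\partial_c f_0\|$, which is finite by $(A3bis)$. This yields a uniform bound for $\varphi f_\theta$ in $L^\infty(0,T;H^1(\Omega\times(0,1)))$. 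From the equation in weak form we also get a uniform bound for $\partial_t(\varphi f_\theta)$ in $L^2(0,T;(H^1)')$.

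\emph{Step 3 (compactness).} Aubin--Lions applied on each $\Omega\times K$, combined with a diagonal argument over an exhaustion of $(0,1)$ by compacts, gives \eqref{conv14}. The range of $r$ comes from Sobolev embedding, and a.e. convergence follows along a further subsequence. For \eqref{conv15}, Step 1 gives uniform integrability near the endpoints:
\[
\int_{(0,\eta)\cup(1-\eta,1)}\int_\Omega f_\theta\le 2\eta|\Omega|\,\lVert f_0\rVert_\infty .
\]
Fatou's lemma gives the same bound for the limit $f$. Splitting $(0,1)=(\eta,1-\eta)\cup$(endpoint strips) and using \eqref{conv14} on the middle part then gives convergence in $L^2(0,T;L^1(\Omega\times(0,1)))$, and $f\in L^\infty(0,T;L^1)$.

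The main obstacle is the $c$-derivative estimate in Step 2. One has to check that every term is bounded uniformly in $\theta$, in particular that $\partial_c\Sigma_\theta^\theta$ is controlled by the slice masses alone, and that the commutator $D'(c)\Delta_x f_\theta$ can be absorbed using only the local positivity of $D$. The justification of differentiating in $c$ must be handled with difference quotients, since \eqref{wp41new} does not provide $\nabla_x\partial_c f_\theta\in L^2$ a priori.
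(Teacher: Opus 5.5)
Your proof of \eqref{conv14} is the same as the paper's: the cut-off energy estimate in $c$, the $c$-derivative estimate, the slice-mass identity $\int_\Omega f_\theta(\vec{x},c,t)\,d\vec{x}=\int_\Omega f_0(\vec{x},c)\,d\vec{x}$ used to bound $\partial_c\Sigma_\theta^\theta(f_\theta)$, the comparison bound on $\partial_t f_\theta$, and Aubin--Lions on $\Omega\times K$. Differentiating in $c$ and testing with $\varphi^2\partial_c f_\theta$ is the same computation as the paper's test function $-\partial_c(\rho_{2\iota}^2\partial_c f_\theta)$. Your difference-quotient justification is a sound way to make rigorous what the paper leaves formal.

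You are also right to check that the $\partial_c\Sigma$ bound survives the smoothing of the kernel, a point the paper passes over. The total variation of $c^*\mapsto\chi_{\theta,\Delta_2}(c-c^*)$ is at most $2$ for every $\theta$.

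A few small corrections:
\begin{itemize}
\item Step 2 does not bound $\varphi f_\theta$ in $L^\infty(0,T;H^1)$. You get $\varphi\,\partial_c f_\theta\in L^\infty(0,T;L^2)$, but only $\varphi\nabla_x f_\theta\in L^2(0,T;L^2)$. An $L^\infty$-in-time bound on $\nabla_x f_\theta$ would require testing with $-\Delta_x f_\theta$, which brings in $\nabla_x\cdot\Sigma_\theta^\theta=O(1/\theta)$. This is harmless: an $L^2(0,T;H^1)$ bound is all Aubin--Lions needs here, and it is exactly \eqref{wp47b}.
\item The term $(\Sigma_\theta^\theta\,\partial_c f_\theta,\varphi^2\nabla_x\partial_c f_\theta)$ leaves $C\int\varphi^2|\partial_c f_\theta|^2$. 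This is closed by Gronwall, not by Step 1.
\item Sobolev embedding on the $(d+1)$-dimensional set $\Omega\times K$ gives $r<6$ for $d=2$ and $r<4$ for $d=3$, not the range in the statement. The paper's argument has the same issue, and only $r=1$ is used afterwards.
\end{itemize}

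The genuine difference is in \eqref{conv15}. The paper tests with the weight $(c(1-c))^{-1/4}$ to obtain \eqref{wp48}--\eqref{wp48b}, then extracts a factor of order $\iota^{1/4}$ from the endpoint strips. You instead reuse the pointwise-in-$c$ slice identity from Step 1. It bounds the mass of $f_\theta$ in the strips, and by Fatou that of $f$, by $C\eta$ uniformly in $t$ and $\theta$.

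The paper's weighted bound is just that identity integrated against a weight. Your route is therefore shorter, gives a rate linear in the strip width, and avoids testing with an unbounded function of $c$. For this step alone it would even work with $f_0\in L^1$, by absolute continuity of $\eta\mapsto\int_0^\eta\int_\Omega f_0$. In both arguments the $L^\infty$ bound on $f_0$ from $(A3bis)$ is still needed for the uniform control of $\partial_c\Sigma_\theta^\theta(f_\theta)$.
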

\begin{proof}
We start by recalling that \eqref{wp19t2}, together with the property that $f_{\theta}\geq 0$ a.e. in $\Omega \times [0,1]\times (0,T)$, is \pier{satisfied for every} $\theta$.

For a given $\iota>0$, let us multiply \eqref{wp190} by $f_{\theta}$ and integrate over $\Omega \times (2\iota,1-2\iota)\times(0,s)$, for any given $s\in (0,T]$. Integrating by parts in space and employing the boundary condition \eqref{wp200}, we obtain that
\begin{align*}
& \frac{1}{2}\int_{2\iota}^{1-2\iota}\!\!\!\int_{\Omega}f_{\theta}^2(s)\abramo{\,d\mathbf{x}dc}+\int_0^s\int_{2\iota}^{1-2\iota}\!\!\!\int_{\Omega}D(c)|\nabla_x f_{\theta}|^2\abramo{\,d\mathbf{x}dcdt}=\frac{1}{2}\int_{2\iota}^{1-2\iota}\!\!\!\int_{\Omega}f_{\theta}^2(0)\abramo{\,d\mathbf{x}dc}\\
& \qquad +\int_0^s\int_{2\iota}^{1-2\iota}\!\!\!\int_{\Omega}\Sigma_{\theta}^{\theta}(f_{\theta})f_{\theta}\nabla_xf_{\theta}\abramo{\,d\mathbf{x}dcdt}.
\end{align*}
Using \eqref{wp19t2}, property $(ii)$ of Lemma~\ref{lem1d}, the Cauchy--Schwarz and the Young inequalities and introducing the quantity $\hat{\iota}:=\min_{c\in [2\iota,1-2\iota]}D(c)>0$ we \pier{infer} that
\begin{align*}
    & \frac{1}{2}\int_{2\iota}^{1-2\iota}\!\!\!\int_{\Omega}f_{\theta}^2(s)\abramo{\,d\mathbf{x}dc}+\hat{\iota}\int_0^s\int_{2\iota}^{1-2\iota}\!\!\!\int_{\Omega}|\nabla_x f_{\theta}|^2\abramo{\,d\mathbf{x}dcdt}\leq \frac{1}{2}\int_{2\iota}^{1-2\iota}\!\!\!\int_{\Omega}f_{\theta}^2(0)\abramo{\,d\mathbf{x}dc}\\
    & \qquad +\frac{\hat{\iota}}{2}\int_0^s\int_{2\iota}^{1-2\iota}\!\!\!\int_{\Omega}|\nabla_x f_{\theta}|^2\abramo{\,d\mathbf{x}dcdt}+\frac{C}{\hat{\iota}}\int_0^s\int_{2\iota}^{1-2\iota}\!\!\!\int_{\Omega}f_{\theta}^2\abramo{\,d\mathbf{x}dcdt}.
\end{align*}
An application of the Gronwall inequality then gives that
\begin{equation}
    \label{wp47bis}
    \esssup_{s\in [0,T]}\int_{2\iota}^{1-2\iota}\!\!\!\int_{\Omega}f_{\theta}^2(s)\abramo{\,d\mathbf{x}dc}\leq C(\iota),\;\; \int_0^s\int_{2\iota}^{1-2\iota}\!\!\!\int_{\Omega}|\nabla_x f_{\theta}|^2\abramo{\,d\mathbf{x}dcdt}\leq C(\iota),
\end{equation}
uniformly in $\theta$, with $C(\iota)\to +\infty$ as $\iota \to 0^+$.

We introduce now, for a given $\iota>0$, the cut-off function $\rho_{\iota}\in W^{1,\infty}([0,1])$ such that
    \[
    \rho_{\iota}(\cdot)\equiv 1\; \text{on}\;I_{\iota}:=[2\iota,1-2\iota]; \;\; \pier{\operatorname{supp}}(\rho_{\iota})\subseteq I_{\iota/2} \; \text{ and }\; 0\leq \rho_{\iota}(\cdot)\leq 1;
    \]
and such that
    \[
    \left|\frac{\partial \rho_{\iota}(c)}{\partial c}\right|\leq \frac{C}{\iota}.
    \]
\pier{By combining piecewise linear and constant components, you can easily} construct such cut-off functions~$\rho_{\iota}$. 
We now multiply \eqref{wp190} by $g=-\partial_c(\rho_{2\iota}^2(c)\partial_c f_{\theta})$ and integrate over $\Omega\times (0,1)\times (0,s)$, for any given $s\in (0,T]$. 
\pier{Note that $g\notin L^2(0,s;L^2(\Omega\times(0,1)))$, since Theorem~\ref{thm6} does not provide any control of $\partial_c^2 f_{\theta}$. Therefore, the following computations are only formal. A rigorous justification could be obtained either by introducing suitable mollifications or by deriving the same estimate from the strong formulation of problem $\mathcal{P}^{\theta,\theta,\epsilon}$, with $\epsilon,\theta>0$, exploiting the regularity result \eqref{wp23a} and subsequently passing to the limit as $\epsilon\to0$. For the sake of readability, we omit these technical details and proceed at a formal level.}
We obtain, after integration by parts and employing the boundary conditions \eqref{wp200}, the property $(i)$ of Lemma~\ref{lem1d} and the fact that $\rho_{2\iota}(0)=\rho_{2\iota}(1)=0$, that
\begin{align}
\label{wp47tris}
    & \notag \frac{1}{2}\int_0^1\int_{\Omega}\rho_{2\iota}^2\betti{(c)}|\partial_c f_{\theta}|^2(s)\abramo{\,d\mathbf{x}dc}+\int_0^s\int_0^1\int_{\Omega}D(c)\rho_{2\iota}^2(c)|\nabla_x(\partial_c f_{\theta})|^2 \abramo{\,d\mathbf{x}dcdt}=\frac{1}{2}\int_0^1\int_{\Omega}\rho_{2\iota}^2\betti{(c)}|\partial_c f_{\theta}|^2(0)\abramo{\,d\mathbf{x}dc}\\
    & \qquad -\underbrace{\int_0^s\left(D'(c)\nabla_x f_{\theta},\rho_{2\iota}^2\pier{(c)}\nabla_x(\partial_cf_{\theta})\right)\abramo{\,dt}}_{=:I_1}-\underbrace{\int_0^s\left(\partial_c\left(\Sigma_{\theta}^{\theta}(f_{\theta})f_{\theta}\right),\rho_{2\iota}^2(c)\nabla_x(\partial_cf_{\theta})\right)\abramo{\,dt}}_{=:I_2}.
\end{align}
We introduce the quantity $\bar{\iota}:=\min_{c\in I_{\iota}}D(c)>0$.
Using the Cauchy--Schwarz and the Young inequalities, Assumption~$(A2)$, \pier{the facts that $\pier{\operatorname{supp}}(\rho_{2\iota})\subseteq [2\iota,1-2\iota]$ and $0\leq \rho_{2\iota} \leq 1$, together with} \eqref{wp47bis}, we obtain
\begin{align*}
&|I_1|\leq \frac{\bar{\iota}}{4}\int_0^s\int_0^1\int_{\Omega}\rho_{2\iota}^2(c)|\nabla_x (\partial_c f_{\theta})|^2\abramo{\,d\mathbf{x}dcdt}+\frac{C}{\bar{\iota}}\int_0^s\int_{2\iota}^{1-2\iota}\!\!\!\int_{\Omega}|\nabla_x f_{\theta}|^2\abramo{\,d\mathbf{x}dct}\\
& \quad \leq \frac{\bar{\iota}}{4}\int_0^s\int_0^1\int_{\Omega}\rho_{2\iota}^2(c)|\nabla_x (\partial_c f_{\theta})|^2\abramo{\,d\mathbf{x}dcdt}+C(\iota),
\end{align*}
with $C(\iota)\to +\infty$ as $\iota\to 0^+$.
\abramo{%
For \pier{the treatment of $I_2$ we additionally invoke \eqref{wp19t2} and property $(ii)$ of Lemma~\ref{lem1d}, deducing} that 
\begin{align*}
    & |I_2|\leq \frac{\bar{\iota}}{4}\int_0^s\int_0^1\int_{\Omega}\rho_{2\iota}^2(c)|\nabla_x(\partial_c f_{\theta})|^2\abramo{\,d\mathbf{x}dc}+\frac{C}{\bar{\iota}}\int_0^s\lVert\partial_c\left(\Sigma_{\theta}^{\theta}(f_{\theta})\right)\rVert_{L^{\infty}(\Omega\times (0,1);\mathbb{R}^d)}^2\,dt\esssup_{\tau\in [0,s]}\int_{2\iota}^{1-2\iota}\lVert f_{\theta}(\tau)\rVert_{L^2(\Omega)}^2\,dc\\
    & \qquad +\frac{C}{\bar{\iota}}\int_0^s\lVert \Sigma_{\theta}^{\theta}(f_{\theta})\rVert_{L^{\infty}(\Omega\times (0,1);\mathbb{R}^d)}^2\int_0^1\int_{\Omega}\rho_{2\iota}^2(c)\lvert \partial_c f_{\theta}\rvert^2\,d\mathbf{x}dcdt\leq \frac{\bar{\iota}}{4}\int_0^s\int_0^1\int_{\Omega}\rho_{2\iota}^2(c)|\nabla_x(\partial_c f_{\theta})|^2\abramo{\,d\mathbf{x}dc}dt\\
    & \qquad +C(\iota)\int_0^s\lVert\partial_c\left(\Sigma_{\theta}^{\theta}(f_{\theta})\right)\rVert_{L^{\infty}(\Omega\times (0,1);\mathbb{R}^d)}^2\,dt+C(\iota)\int_0^s\int_0^1\int_{\Omega}\rho_{2\iota}^2(c)\lvert \partial_c f_{\theta}\rvert^2\abramo{\,d\mathbf{x}dcdt},
\end{align*}
where $C(\iota)\to +\infty$ as $\iota\to 0^+$. We now observe from property $(iii)$ of Lemma~\ref{lem1d} and from the fact that $f_{\theta}\geq 0$ a.e. in $\Omega \times [0,1]\times (0,T)$, that 
\[
\lVert\partial_c\left(\Sigma_{\theta}^{\theta}\pier{(f(t))}\right)\rVert_{L^{\infty}(\Omega\times (0,1);\mathbb{R}^d)}\leq C \left\lVert \int_{\Omega}f_{\theta}(x^*,\cdot,t)\,dx^* \right\rVert_{L^{\infty}(0,1)}.
\]
Taking the integral over $\Omega$ of \eqref{wp190} and employing the boundary conditions \eqref{wp200}, we \pier{infer} that
\[
\int_{\Omega}f_{\theta}(x^*,c,t)\,dx^*=\int_{\Omega}f_{\theta}(x^*,c,0)\,dx^* \;\ \text{a.e.} \;\, (c,t)\in (0,1)\times (0,T).
\]
Then, considering Assumption $(A3bis)$, we conclude that}
\[
\abramo{\lVert\partial_c\left(\Sigma_{\theta}^{\theta}\pier{(f(t))}\right)\rVert_{L^{\infty}(\Omega\times (0,1);\mathbb{R}^d)}\leq C \left\lVert \int_{\Omega}f_{\theta}(x^*,\cdot,0)\,dx^* \right\rVert_{L^{\infty}(0,1)}\leq C\lVert f_0\rVert_{L^{\infty}(\Omega \times (0,1))}\leq C.}
\]
Collecting all the results, we have that
\begin{align*}
    & \frac{1}{2}\int_0^1\int_{\Omega}\rho_{2\iota}^2|\partial_c f_{\theta}|^2(s)\abramo{\,d\mathbf{x}dc}+\frac{\bar{\iota}}{2}\int_0^s\int_0^1\int_{\Omega}\rho_{2\iota}^2(c)|\nabla_x(\partial_c f_{\theta})|^2\abramo{\,d\mathbf{x}dcdt}\leq \frac{1}{2}\int_0^1\int_{\Omega}\rho_{2\iota}^2|\partial_c f_{\theta}|^2(0)\abramo{\,d\mathbf{x}dc}\\
    & \qquad +C(\iota)+C(\iota)\int_0^s\int_0^1\int_{\Omega}\rho_{2\iota}^2|\partial_c f_{\theta}|^2\abramo{\,d\mathbf{x}dc}dt,
\end{align*}
from which, using a Gronwall argument and \eqref{wp47bis}, we conclude that 
\begin{equation}
\label{wp47b}
\abramo{\int_0^T}\pier{\int_{4\iota}^{1-4\iota}\!\!\!\int_{\Omega}}|\nabla f_{\theta}|^2\abramo{\,d\mathbf{x}dcdt}\leq C(\iota),
\end{equation}
uniformly in $\theta$, \betti{with} $C(\iota)\to +\infty$ as $\iota\to 0^+$. From a comparison argument in \eqref{wp190} and thanks to \eqref{wp47bis}-\eqref{wp47b}, we also have that, for any $\iota>0$,
\begin{equation}
\label{wp47c}
\lVert \partial_t f_{\theta}\rVert_{\abramo{L^{2}}\left(0,T;\left(H^1(\Omega\times [4\iota,1-4\iota])\right)'\right)}\leq C,
\end{equation}
uniformly in $\theta$.
\abramo{Indeed, from \eqref{wp190} and thanks to the regularity \eqref{wp41new} of $f_{\theta}$, we can write
\begin{align*}
&<\partial_t f_{\theta},\chi>_{\Omega \times [4\iota,1-4\iota]}{}=\pier{\int_{4\iota}^{1-4\iota}\!\!\!\int_{\Omega}}\partial_t f_{\theta}\,\chi\abramo{\,d\mathbf{x}dc}=-{}\pier{\int_{4\iota}^{1-4\iota}\!\!\!\int_{\Omega}}\Sigma_{\theta}^{\theta}(f_{\theta})f_{\theta}\cdot\nabla_x \chi\abramo{\,d\mathbf{x}dc}\\
& \qquad -\pier{\int_{4\iota}^{1-4\iota}\!\!\!\int_{\Omega}}D(c)\nabla_xf_{\theta}\cdot \nabla_x \chi\abramo{\,d\mathbf{x}dc}
\end{align*}
for a.e. $t\in (0,T)$ and for any $\chi \in L^2(0,T;H^1(\Omega \times [4\iota,1-4\iota]))$,
\pier{Then, in view of  \eqref{wp47bis}-\eqref{wp47b} we easily deduce} that
\[
\int_0^T\lvert <\partial_t f_{\theta},\chi>_{\Omega \times [4\iota,1-4\iota]}\rvert \,dt\leq C\lVert \chi\rVert_{L^2(0,T;H^1(\Omega \times [4\iota,1-4\iota]))},
\]
which gives \eqref{wp47c}.} 
Hence, the Aubin--Lions theorem gives the compactness result \eqref{conv14}. Also, since for any $c\in (0,1)$ there exists a $\iota>0$ such that $c\in (4\iota,1-4\iota)$, the compactness result in \eqref{conv14} implies the pointwise convergence a.e. in $\Omega \times (0,1)\times (0,T)$, \pier{at least for another subsequence.} \abramo{Moreover, \pier{owing to Fatou's lemma,} the pointwise convergence $f_{\theta}\to f$ a.e. in $\Omega \times (0,1)\times (0,T)$ and the uniform bound $\lVert f_{\theta}\rVert_{L^{\infty}(0,T;L^1(\Omega\times (0,1)))}\leq C$ imply that also the limit $f\in L^{\infty}(0,T;L^1(\Omega\times (0,1)))$.}

In order to prove \eqref{conv15}, we preliminarily multiply \eqref{wp190} by $g\equiv \frac{1}{\sqrt[4]{c(1-c)}}$ and integrate over $\Omega\times (0,1)\times (0,s)$, for any given $s\in (0,T]$. Note that $g$ is admissible since it belongs to $L^2(0,1)$. We obtain that
\begin{align*}
    & \int_0^1\int_{\Omega}\frac{1}{\sqrt[4]{c(1-c)}}f_{\theta}(s)\abramo{\,d\mathbf{x}dc}=\int_0^1\int_{\Omega}\frac{1}{\sqrt[4]{c(1-c)}}f_{\theta}(0)\abramo{\,d\mathbf{x}dc}.
\end{align*}
Hence, thanks to Assumption $(A3bis)$, we obtain that
\begin{align*}
    & \int_0^1\int_{\Omega}\frac{1}{\sqrt[4]{c(1-c)}}f_{\theta}(s)\abramo{\,d\mathbf{x}dc}\leq |\Omega|\lVert f_{\theta}(0) \rVert_{L^{\infty}(\Omega\times (0,1))} \int_0^1\frac{1}{\sqrt[4]{c(1-c)}}\abramo{\,dc}\leq C,
\end{align*}
which implies that
\begin{equation}
\label{wp48}
\esssup_{s\in [0,T]}\int_0^1\int_{\Omega}\frac{1}{\sqrt[4]{c(1-c)}}f_{\theta}(s)\abramo{\,d\mathbf{x}dc}\leq C.
\end{equation}
    We point out that estimate \eqref{wp48} is valid for the solution of system $\mathcal{P}^{\theta,\theta}$, while it would not be possible to prove a similar estimate for the solution of system $\mathcal{P}^{\theta,\theta,\epsilon}$. This is the reason why we need to perform the limit passage in two consecutive steps, setting firstly $\epsilon\to 0$ and secondly $\theta\to 0$. 

Thanks to the Fatou's lemma we can extend the property~\eqref{wp48} also to the limit point $f$ \pier{that is} identified by \eqref{conv14}, i.e.,
\begin{equation}
\label{wp48b}
\esssup_{s\in [0,T]}\int_0^1\int_{\Omega}\frac{1}{\sqrt[4]{c(1-c)}}f(s)\abramo{\,d\mathbf{x}dc}\leq C.
\end{equation}
\pier{We are now in a position to prove \eqref{conv15}. Let $0<\iota<1$. We find out that}
\abramo{%
\begin{align*}
&\lVert f_{\theta}-f \rVert_{\abramo{L^{2}}(0,T;L^1(\Omega \times (0,1)))}^2\\
& \quad\leq 2\lVert f_{\theta}-f \rVert_{\abramo{L^{2}}(0,T;L^1(\Omega \times (\iota,1-\iota)))}^2+2\int_0^T\left(\int_0^{\iota}\int_{\Omega}|f_{\theta}-f|\abramo{\,d\mathbf{x}dc}\right)^2\abramo{\,dt}+2\int_0^T\left(\int_{1-\iota}^{1}\int_{\Omega}|f_{\theta}-f|\abramo{\,d\mathbf{x}dc}\right)^2\abramo{\,dt}\\
& \quad = 2\lVert f_{\theta}-f \rVert_{L^{2}(0,T;L^1(\Omega \times (\iota,1-\iota)))}^2+2\int_0^T\left(\int_0^{\iota}\int_{\Omega}\frac{\sqrt[4]{c(1-c)}}{\sqrt[4]{c(1-c)}}|f_{\theta}-f|\abramo{\,d\mathbf{x}dc}\right)^2\abramo{\,dt}\\
& \qquad+2\int_0^T\left(\int_{1-\iota}^{1}\int_{\Omega}\frac{\sqrt[4]{c(1-c)}}{\sqrt[4]{c(1-c)}}|f_{\theta}-f|\abramo{\,d\mathbf{x}dc}\right)^2\abramo{\,dt}\leq 2\lVert f_{\theta}-f \rVert_{L^{2}(0,T;L^1(\Omega \times (\iota,1-\iota)))}^2\\
& \qquad +2\sqrt{\frac{\iota}{2}}T\left[\left(\esssup_{s\in [0,T]}\int_0^{\iota}\int_{\Omega}\frac{1}{\sqrt[4]{c(1-c)}}|f_{\theta}-f|\abramo{\,d\mathbf{x}dc}\right)^2+\left(\esssup_{s\in [0,T]}\int_{1-\iota}^{1}\int_{\Omega}\frac{1}{\sqrt[4]{c(1-c)}}|f_{\theta}-f|\abramo{\,d\mathbf{x}dc}\right)^2\right]\\
& \quad \rightarrow  0
\end{align*}}%
as $\iota \to 0$, thanks to \eqref{conv14}, \eqref{wp48} and \eqref{wp48b}. This proves \eqref{conv15}.
\end{proof}
With the strong convergence results introduced in Lemma~\ref{lem6} we can characterize the limit problem $\mathcal{P}^{\theta,\theta}\to \mathcal{P}$ as $\theta\to 0$, giving the following existence result for problem $\mathcal{P}$. 
\begin{theorem}
    \label{thm8}
    \betti{Let} Assumptions $(A2)-(A3bis)$ hold \betti{true}. Then, there exists a distributional solution $f$ of problem $\mathcal{P}$, with 
    \begin{equation}
        \label{wp49}
        f\in L^{\infty}(0,T;L^1(\Omega \times (0,1))),
    \end{equation}
    which satisfies the formulation 
    \begin{align}
    \label{wp50}
   -\int_0^T\abramo{\int_0^1\int_{\Omega}}f\,\zeta ' h\abramo{\,d\mathbf{x}dc}+\int_0^T\abramo{\int_0^1\int_{\Omega}}\Sigma(f)f\cdot \zeta \nabla_x h\abramo{\,d\mathbf{x}dc}-\int_0^T\abramo{\int_0^1\int_{\Omega}}D(c) f\, \zeta\Delta_xh\abramo{\,d\mathbf{x}dc}-\left( f_0,\zeta(0)h\right)=0,
\end{align}
for all $\zeta \in C_c^1([0,T))$, $h\in L^{\infty}(\Omega \times (0,1))$ with $\nabla_x h \in L^{\infty}(\Omega \times (0,1);\mathbb{R}^d)$, $\Delta_x h \in L^{\infty}(\Omega \times (0,1))$
 and such that $\partial_nh(\cdot,c)|_{\partial \Omega}=0$, with the further properties that $f\geq 0$ a.e. in $\Omega \times [0,1]\times [0,T]$ and $\int_0^1\int_{\Omega}fd\vec{x}dc=1$.
\end{theorem}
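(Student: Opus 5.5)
We take the family $f_\theta$ of strong solutions of $\mathcal{P}^{\theta,\theta}$ given by Theorem~\ref{thm6} (with $\delta\equiv\theta$) and the limit $f$ from Lemma~\ref{lem6}. We derive a very weak formulation for $f_\theta$ and pass to the limit $\theta\to0$ using \eqref{conv14}--\eqref{conv15}.

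\textbf{Step 1: very weak formulation for $f_\theta$.} Fix $\zeta\in C^1_c([0,T))$ and an admissible $h$. We multiply \eqref{wp190} by $\zeta h$ and integrate over $\Omega\times(0,1)\times(0,T)$. We integrate by parts once in time, using \eqref{wp210}. In space we integrate by parts twice. The first integration uses the boundary condition \eqref{wp200}, which kills the flux boundary term. The second moves $\nabla_x$ from $f_\theta$ onto $h$ and uses $\partial_n h=0$ on $\partial\Omega$. This yields \eqref{wp50} with $\Sigma$ replaced by $\Sigma_\theta^\theta(f_\theta)$ and $f$ by $f_\theta$. The regularity \eqref{wp41new} makes every term meaningful. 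The second integration by parts is justified for a.e.\ $c$, since $f_\theta(\cdot,c,t)\in H^1(\Omega)$ and $h(\cdot,c)$ has bounded Laplacian. If needed, $h$ can first be approximated by smooth functions satisfying the Neumann condition.

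\textbf{Step 2: passage to the limit.} The terms $\int f_\theta\zeta' h$ and $\int D(c)f_\theta\zeta\Delta_x h$ converge to their limits by \eqref{conv15}, because $h$, $\Delta_x h$ and $D$ are bounded. The initial term does not depend on $\theta$.

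The nonlocal term is split as
\[
\Sigma_\theta^\theta(f_\theta)f_\theta-\Sigma(f)f=\bigl(\Sigma_\theta^\theta(f_\theta)-\Sigma_\theta^\theta(f)\bigr)f_\theta+\bigl(\Sigma_\theta^\theta(f)-\Sigma(f)\bigr)f_\theta+\Sigma(f)(f_\theta-f).
\]
The first piece is bounded by $\Delta_1\|f_\theta-f\|_{L^1}\|f_\theta\|_{L^1}$, by linearity and Lemma~\ref{lem1d}$(ii)$. It therefore vanishes in $L^1$ in time, thanks to \eqref{conv15} and \eqref{wp19t2}. The third piece vanishes by \eqref{conv15} together with $|\Sigma(f)|\le\Delta_1\|f\|_{L^1}$, which is bounded by Lemma~\ref{lem1}$(i)$. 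In both pieces the test factor $\zeta\nabla_x h$ is bounded.

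\textbf{Main obstacle: the kernel-convergence piece.} The hardest piece is the second one, which requires $\Sigma_\theta^\theta(f)\to\Sigma(f)$. For a.e.\ $(\vec x,c,t)$ the integrand $G^\theta(\vec x)P_{\theta,\Delta_1,\Delta_2}(\vec x-\vec x^*)(\vec x-\vec x^*)f(\vec x^*,c^*,t)$ converges for a.e.\ $(\vec x^*,c^*)$. This follows from (jv) and from \eqref{wp13}: the discontinuity sets $\{|\vec x-\vec x^*|=\Delta_1\}$ and $\{|c-c^*|=\Delta_2\}$ are null. The integrand is dominated by $\mathrm{diam}(\Omega)|f|\in L^1$. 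Dominated convergence then gives pointwise a.e.\ convergence of $\Sigma_\theta^\theta(f)$ to $\Sigma(f)$, with the uniform bound $2\Delta_1$.

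Because $f_\theta$ converges only strongly in $L^1$ and not in a weighted space, we write
\[
\bigl(\Sigma_\theta^\theta(f)-\Sigma(f)\bigr)f_\theta=\bigl(\Sigma_\theta^\theta(f)-\Sigma(f)\bigr)(f_\theta-f)+\bigl(\Sigma_\theta^\theta(f)-\Sigma(f)\bigr)f.
\]
The first part is controlled by the uniform bound and \eqref{conv15}. The second part tends to zero by dominated convergence, since $f\in L^1$.

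\textbf{Step 3: properties of $f$.} Nonnegativity of $f$ follows from the a.e.\ convergence in \eqref{conv14}. Unit mass follows from $\int f_\theta=1$ together with the $L^1$ convergence \eqref{conv15}, which holds for a.e.\ $t$ along a subsequence. The regularity \eqref{wp49} is already contained in Lemma~\ref{lem6}.
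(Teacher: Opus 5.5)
Your proposal is correct and follows the paper's proof essentially step by step. It uses the same test function $\zeta h$ leading to \eqref{wp46lim}, the same four-term splitting of the nonlocal term (your nested decomposition reproduces the paper's $I_1$--$I_4$), and the same pointwise-convergence-plus-dominated-convergence argument for the kernel piece. The minor differences are that you absorb $G^\theta$ directly through (jv) inside dominated convergence instead of invoking the weak-$*$ limit \eqref{gdws}, and that you explicitly verify nonnegativity and unit mass of $f$ from the a.e.\ and $L^1$ convergences, which the paper's proof leaves implicit.
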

\abramo{
\begin{remark}
    We remark that the initial condition \pier{for $f$, that is \eqref{wp3}, is} implicitly included in the weak-distributional formulation \eqref{wp50}.
\end{remark}
}
\begin{proof}
\pier{Let $g=\zeta h$, where $\zeta$ and $h$ are as in the statement. Multiplying \eqref{wp190} by $g$ and integrating over $\Omega\times(0,1)\times(0,T)$, an integration by parts in both time and space yields}
    \begin{align}
    \label{wp46lim}
    -\int_0^T\left(f_{\theta},\zeta'h\right)+\int_0^T(\Sigma_{\theta}^{\theta}(f_{\theta})f_{\theta}, \zeta \nabla_x h)-\int_0^T(D(c)f_{\theta}, \zeta \Delta_x h)-(f_0,\zeta(0)h)=0.
\end{align}
We employ the convergence result \eqref{conv15} to pass to the limit in \eqref{wp46lim} and obtain \eqref{wp50}. The only non-trivial steps in the limit passage regard the second term in \eqref{wp46lim}, so we only show the details for it. 
We rewrite the second term in \eqref{wp46lim} as
\begin{align*}
 & \int_0^T(\Sigma_{\theta}^{\theta}(f_{\theta})f_{\theta}, \zeta \nabla_x h)\abramo{\,dt}=\underbrace{\int_0^T\left(\left(\Sigma_{\theta}^{\theta}(f_{\theta})-\Sigma_{\theta}^{\theta}(f)\right)f_{\theta}, \zeta \nabla_x h\right)\abramo{\,dt}}_{=:I_1}+\underbrace{\int_0^T\abramo{\int_0^1\int_{\Omega}}\left(\Sigma_{\theta}^{\theta}(f)-\Sigma(f)\right)f \cdot \zeta \nabla_x h\abramo{\,d\mathbf{x}dcdt}}_{=:I_2}\\
 & \qquad +\underbrace{\int_0^T\abramo{\int_0^1\int_{\Omega}}\left(\Sigma_{\theta}^{\theta}(f)-\Sigma(f)\right)(f_{\theta}-f)\cdot \zeta \nabla_x h\abramo{\,d\mathbf{x}dcdt}}_{=:I_3}+\underbrace{\int_0^T\abramo{\int_0^1\int_{\Omega}}\Sigma(f)\left(f_{\theta}-f\right)\cdot \zeta \nabla_x h\abramo{\,d\mathbf{x}dcdt}}_{=:I_4}\\
 & \qquad +\int_0^T\abramo{\int_0^1\int_{\Omega}}\Sigma(f)f\cdot \zeta \nabla_x h\abramo{\,d\mathbf{x}dcdt}.
\end{align*}
We observe that
\begin{align*}
    & |I_1|\leq \lVert \Sigma_{\theta}^{\theta}(f_{\theta})-\Sigma_{\theta}^{\theta}(f)\rVert_{\abramo{L^{2}}(0,T;L^{\infty}(\Omega \times (0,1);\mathbb{R}^d))}\lVert f_{\theta}\rVert_{L^{\infty}(0,T;L^1(\Omega \times (0,1)))}\lVert \zeta \rVert_{\abramo{L^2}(0,T)}\lVert \nabla_x h\rVert_{L^{\infty}(\Omega \times (0,1))}\\
    & \quad \leq C \lVert f_{\theta}-f\rVert_{\abramo{L^2}(0,T;L^{1}(\Omega \times (0,1)))}\lVert f_{\theta}\rVert_{L^{\infty}(0,T;L^1(\Omega \times (0,1)))}\lVert \zeta \rVert_{\abramo{L^2}(0,T)}\lVert \nabla_x h\rVert_{L^{\infty}(\Omega \times (0,1))}\to 0
\end{align*}
as $\theta \to 0$, thanks to property $(ii)$ of Lemma~\ref{lem1d} and the linearity of $\Sigma_{\theta}^{\theta}(\cdot)$ and to \eqref{conv15}.
Moreover,  recalling from \eqref{wp16} that
\[
\Sigma_{\theta}^{\theta}(\cdot)=G^{\theta}\Sigma_{\theta}(\cdot),
\]
we rewrite $I_2$ as
\[
I_2=\int_0^T\abramo{\int_0^1\int_{\Omega}}G^{\theta}\left(\Sigma_{\theta}(f)-\Sigma(f)\right)f \cdot \zeta \nabla_x h\abramo{\,d\mathbf{x}dcdt}+\int_0^T\abramo{\int_0^1\int_{\Omega}}\left(G^{\theta}-1\right)\Sigma(f)f\cdot \zeta \nabla_x h\abramo{\,d\mathbf{x}dcdt}.
\]
\abramo{Thanks to the property \eqref{gdws} of $G^{\theta}(\cdot)$ and the regularity of $f$ and of the test functions, we have that $\int_0^T\abramo{\int_0^1\int_{\Omega}}\left(G^{\theta}-1\right)\Sigma(f)f\cdot \zeta \nabla_x h\abramo{\,d\mathbf{x}dcdt}\to 0$ as $\theta \to 0$. For what concerns the first term, we start by observing that, thanks to \eqref{wp13}, we have that  $\left( P_{\theta,\Delta_1,\Delta_2} - P_{\Delta_1,\Delta_2} \right) (\vec{x} - \vec{x}^*) f(\vec{x}^*, c^*, t) \to 0$ for a.e. $(\vec{x}^*, c^*)\in \Omega\times (0,1)$. Moreover, thanks to \eqref{wp14}, we have that $\left| \left( P_{\theta,\Delta_1,\Delta_2} - P_{\Delta_1,\Delta_2} \right) (\vec{x} - \vec{x}^*) f(\vec{x}^*, c^*, t)\right|\leq 2 \, \Delta_1 |f(\vec{x}^*, c^*, t)|$. Then, \eqref{wp49} and the Dominated Convergence Theorem imply that $\Sigma_{\theta}(f)-\Sigma(f)\to 0$ a.e. in $\Omega \times (0,1) \times (0,T)$. This, together with property $(jv)$ of $G^{\theta}$, gives that the integrand $G^{\theta}\left(\Sigma_{\theta}(f)-\Sigma(f)\right)f\cdot \zeta \nabla_x h\to 0$ a.e. in $\Omega\times (0,1)\times (0,T)$. Also, from property $(i)$ 
of Lemma \ref{lem1} we have that 
\[
\left|G^{\theta}\left(\Sigma_{\theta}(f)-\Sigma(f)\right)f\cdot \zeta \nabla_x h\right|\leq 2\Delta_1 \lVert f\rVert_{L^\infty(0,T;L^1(\Omega\times (0,1)))} \lVert\nabla_x h\rVert_{L^\infty(\Omega \times (0,1))} \lVert\zeta\rVert_{L^\infty(0,T)}|f(\vec{x},c,t)|.
\]
Again, \eqref{wp49}, the regularity of the test functions and the Dominated Convergence Theorem imply that $\int_0^T\abramo{\int_0^1\int_{\Omega}}G^{\theta}\left(\Sigma_{\theta}(f)-\Sigma(f)\right)f\cdot \zeta \nabla_x h\abramo{\,d\mathbf{x}dcdt}\to 0$ as $\theta \to 0$. Hence,} \pier{it turns out that}
\[
I_2\to 0
\]
as $\theta \to 0$. For what concerns $I_3$, we have that
\begin{align*}
    & |I_3|\leq \lVert \Sigma_{\theta}^{\theta}(f)-\Sigma(f)\rVert_{L^{\infty}(0,T;L^{\infty}(\Omega \times (0,1);\mathbb{R}^d))}\lVert f_{\theta}-f\rVert_{\abramo{L^2}(0,T;L^1(\Omega \times (0,1)))}\lVert \zeta \rVert_{\abramo{L^2}(0,T)}\lVert \nabla_x h\rVert_{L^{\infty}(\Omega \times (0,1))}\to 0
\end{align*}
as $\theta \to 0$, thanks to properties $(i)$ of Lemma~\ref{lem1} and $(ii)$ of Lemma~\ref{lem1d} and thanks to \eqref{conv15}.
Finally, we have that
\begin{align*}
    & |I_4|\leq \lVert \Sigma(f)\rVert_{L^{\infty}(0,T;L^{\infty}(\Omega \times (0,1);\mathbb{R}^d))}\lVert f_{\theta}-f\rVert_{\abramo{L^2}(0,T;L^1(\Omega \times (0,1)))}\lVert \zeta \rVert_{\abramo{L^2}(0,T)}\lVert \nabla_x h\rVert_{L^{\infty}(\Omega \times (0,1))}\to 0
\end{align*}
as $\theta \to 0$, thanks to property $(i)$ of Lemma~\ref{lem1} and to \eqref{conv15}.

We thus conclude that
\[
\int_0^T(\Sigma_{\theta}^{\theta}(f_{\theta})f_{\theta}, \zeta \nabla_x h)\abramo{\,dt}\to \int_0^T\abramo{\int_0^1\int_{\Omega}}\Sigma(f)f\cdot \zeta \nabla_x h\abramo{\,d\mathbf{x}dcdt}
\]
as $\theta \to 0$.

\abramo{All the other terms in \eqref{wp46lim} pass to the limit in a straightforward way thanks to \eqref{conv15}, and thus we recover \pier{\eqref{wp50} in the limit}. This concludes the proof.}
\end{proof}
\abramo{
\begin{remark}
    Due to the low regularity of the solutions of \eqref{wp50}, their uniqueness remains an open problem. We remark that at the level of the regularized systems we could establish both uniqueness and continuous dependence \pier{of the solution on the data}, which is important in view of numerical approximations.
\end{remark}
}

\section*{Conclusions}
\mz{
We have established existence of global distributional solutions for a kinetic model of consensus-based image segmentation on a bounded spatial domain. The model combines a nonlocal Hegselmann–Krause interaction with a bounded-confidence kernel and a diffusion coefficient that degenerates at the endpoints of the gray-level interval. These \er{aspects}, together with the absence of diffusion in the feature variable, prevent a direct application of standard compactness arguments.
Our analysis relies on a three-level regularization of the interaction kernel, the nonlocal drift near the spatial boundary, and the degenerate diffusion. We first establish well-posedness and regularity for \pc{the solution to} the regularized problems, then pass to the limit as the regularization parameters vanish. The resulting distributional solution is nonnegative and preserves total mass. This extends the analytical framework for consensus-based segmentation models to bounded domains and bounded-confidence interactions \aa{characterized by jump discontinuities}.
Future work will focus on the large-time behaviour of solutions, in particular whether and under which conditions the kinetic dynamics lead to the formation of clusters that are compatible with available segmentation masks. A further step will be to compare the segmentation masks obtained from the model with those derived from real images, assessing how the interaction thresholds and diffusion affect the resulting segmentation.}

\section*{Acknowledgments}
\mz{The research underlying this paper has been undertaken within the activities of the GNAMPA and GNFM groups
of INdAM (National Institute of High Mathematics). All the authors acknowledge partial support from the PRIN2022PNRR project No.P2022Z7ZAJ, European Union - NextGenerationEU. M.Z. acknowledges partial support by ICSC - Centro Nazionale di Ricerca in High Performance Computing, Big Data and Quantum Computing, funded by European Union - NextGenerationEU.}

\end{document}